\documentclass[type=article,thmsty=normal]{notes}
\usepackage{mathdef}

\usepackage{xcolor} 
\definecolor{brightcerulean}{rgb}{0.11, 0.67, 0.84}
\definecolor{bondiblue}{rgb}{0.0, 0.58, 0.71}
\definecolor{darkspringgreen}{rgb}{0.09, 0.45, 0.27}
\definecolor{darkred}{rgb}{0.55, 0.0, 0.0}
\colorlet{mdtRed}{red!50!black}
\hypersetup{pdfpagemode={UseOutlines},
bookmarksopen=true,
bookmarksopenlevel=0,
hypertexnames=false,
colorlinks=true,
citecolor=darkspringgreen,
linkcolor=darkred,
urlcolor=darkred,
pdfstartview={FitV},
unicode,
breaklinks=true,
}

\usepackage[style=numeric-comp,giveninits=true,backend=biber,natbib=true,maxbibnames=99,doi=false,isbn=false,url=false]{biblatex}
\title{High-Order Talagrand and Eldan--Gross Inequalities via Besov-Type Variance Functionals}

\author{\begin{tabular}{C{6cm}C{6cm}}
{Guangyue \textsc{Han}} & {Peijie \textsc{Li}}\\
\small The University of Hong Kong & \small The University of Hong Kong\\
\small \textit{\email{ghan@hku.hk}}  & \small \textit{\email{lipeijie98@connect.hku.hk}} \\
\end{tabular}
}

\date{}

\begin{document}

\maketitle

\begin{abstract}
By introducing high-order Besov-type variance functionals that generalize the canonical variance, we develop a unified framework for proving high-order Talagrand-type inequalities that relate high-order energies to Fourier weights. Applying this machinery, we establish high-order Poincar\'e-type, $\Leb^p$--$\Leb^q$, isoperimetric-type, Falik--Samorodnitsky and Eldan--Gross inequalities, all with explicit constants, in both the Boolean and Gaussian settings. Fundamentally, our semigroup-based framework relies primarily on hypercontractivity and high-order Bismut-type derivative estimates, and is broadly applicable.

\vspace{2mm}
\noindent \textbf{Keywords:} High-order Talagrand-type inequalities, Besov-type variance functionals, Markov semigroups, Hypercontractivity, Boolean analysis, Gaussian measure

\vspace{2mm}
\noindent \textbf{2020 MSC}: Primary 60E15, Secondary 46E35, 47D07, 42C10
\end{abstract}

\section{Introduction}
\label{Section:Introduction}

\subsection{Background}

Talagrand-type inequalities have a long history in Boolean function analysis, with deep connections to isoperimetry, sharp threshold phenomena, and concentration of measure in high-dimensional analysis. Initiated by Talagrand's seminal works~\cite{Talagrand1993Iso,Talagrand1994OnRA,Talagrand1997OnBA} and further developed in a long line of research culminating in the recent breakthrough of Eldan--Gross~\cite{Eldan2022Concentration}, these inequalities strengthen classical results such as the Poincar\'{e} inequality and the Kahn--Kalai--Linial (KKL) theorem~\cite{KKL1988}, providing logarithmically amplified dimension-free bounds relating variances of functions to their first-order energies.

More specifically, for functions on the $n$-dimensional Boolean cube $\cube^n$, let $\Var$ and $\left\|\,\cdot\,\right\|_{\Leb^p}$ denote the variance and $\Leb^p$ norm taken w.r.t. the uniform probability measure, and let $\left|\grad \cdot\,\right|^2=\sum_i \left|\Dop_i \cdot\,\right|^2$ denote the (squared) modulus of discrete gradient, where $\Dop_i$ is the discrete derivative in the $i$-th coordinate. The Poincar\'e inequality then provides an elementary bound between energy and variance: for all $f:\cube^n\to \Rbb$,
\begin{equation}\label{eq:boolean-Poincare}
    \left\|\left|\grad f\right|\right\|_{\Leb^2}^2 \geq \Var(f).
\end{equation}
In the famous paper by Kahn--Kalai--Linial~\cite{KKL1988}, a logarithmic strengthening of \eqref{eq:boolean-Poincare} was established for Boolean functions $f:\cube^n\to \cube$:
\begin{equation}\label{eq:KKL}
    \max_{i\in [n]}\Inf_i(f) \gtrsim \Var(f)\, \frac{\ln(n)}{n},
\end{equation}
where $\Inf_i(f):=\left\|\Dop_i f\right\|_{\Leb^2}^2$ is the influence (or ``partial energy'') of coordinate $i$ on $f$. Note that for $\cube$-valued $f$, $\left|\Dop_i f\right|$ is $\{0,1\}$-valued, thus $\Inf_i(f)=\left\|\Dop_i f\right\|_{\Leb^p}^p$ for all $p\geq 1$.

In \cite{Talagrand1993Iso}, Talagrand proved the following $\Leb^1$ isoperimetric-type inequality, which yields another logarithmic strengthening of \eqref{eq:boolean-Poincare}: for all $\func{f}{\cube^n}{\cube}$,
\begin{equation}\label{eq:Tal-iso1}
    \left\|\left|\grad f\right|\right\|_{\Leb^1} \gtrsim \Var(f) \sqrt{1+\ln \left(\frac{1}{\Var(f)}\right)}.
\end{equation}
Compared to the KKL inequality \eqref{eq:KKL}, which is tight for the Tribes functions but off by $\sqrt{n}/\log(n)$ for the Majority functions (see \cite[Sections 4 \& 5]{Ryan2014book}), Talagrand's isoperimetric-type inequality \eqref{eq:Tal-iso1} is tight for the Majority functions rather than the Tribes functions. Moreover, \eqref{eq:Tal-iso1} can also be seen as a quantitative analogue of the Gaussian isoperimetric inequality on the Boolean cube,  and is further strengthened by the Bobkov inequality in \cite{Bobkov1997isoperimetric}.

Talagrand also improved the KKL inequality~\eqref{eq:KKL} in \cite{Talagrand1994OnRA}, by showing an $\Leb^2$--$\Leb^1$ influence inequality: for all $\func{f}{\cube^{n}}{\Rbb}$,
\begin{equation}\label{eq:Tal-KKL}
    \sum_{i\in [n]} \frac{\left\|\Dop_i f\right\|_{\Leb^2}^2}{1+\ln(\left\|\Dop_i f\right\|_{\Leb^2}/\left\|\Dop_i f\right\|_{\Leb^1})} \gtrsim \Var(f).
\end{equation}
This inequality has since been extensively studied and extended to multifarious variants and settings: \cite{CL2012hypercontr} developed a semigroup approach extending \eqref{eq:Tal-KKL} to general hypercontractive settings and $\Leb^p$ variants; \cite{CE2023} studied \eqref{eq:Tal-KKL} in different types of Banach space codomain;
\cite{Rouze2024quantum,Blecher2024geometricinfluencesquantumboolean,JLZ2025} proved noncommutative analogues of \eqref{eq:Tal-KKL} in quantum settings; \cite{Tanguy2020,P2025} established high-order generalizations of \eqref{eq:Tal-KKL} in the Boolean and Gaussian settings, relating high-order influences (moments of high-order partial derivatives) and high-order Fourier weights, and further yielding high-order analogues of the KKL inequality \eqref{eq:KKL}; extending the method in \cite{P2025} to the quantum setting, \cite{Blecher2024geometricinfluencesquantumboolean} also proved a quantum $\Leb^1$ high-order variant of \eqref{eq:Tal-KKL}.

Recently in \cite{Eldan2022Concentration}, Eldan and Gross established the following breakthrough inequality: for all $\func{f}{\cube^n}{\cube}$ and $p\in [1,2]$,
\begin{equation}\label{eq:Eldan-Gross}
    \left\|\left|\grad f\right|\right\|_{\Leb^p}^p \gtrsim \Var(f) \left[\ln \left(2+\frac{\upe}{\sum_{i} \Inf_i(f)^2}\right)\right]^{\frac{p}{2}}.
\end{equation}
When $p=1$, the Eldan--Gross inequality \eqref{eq:Eldan-Gross} validates a conjecture of Talagrand \cite{Talagrand1997OnBA} that for monotone functions $\func{f}{\cube^n}{\cube}$ ($f(x)\leq f(y)$ whenever $x_i\leq y_i$ for all $i$), the logarithmic term $\ln(1/\Var(f))$ in \eqref{eq:Tal-iso1} can be improved to $\ln^{+}(1/\sum_i \Inf_i(f)^2)$.
When $p=2$, \eqref{eq:Eldan-Gross} improves the result of Falik--Samorodnitsky \cite{Falik-Samorodnitsky2007Edge-Isoperimetric} for $\cube$-valued functions and further implies the KKL inequality \eqref{eq:KKL}. Alternative proofs via combinatorial and semigroup approaches were later given in~\cite{rosenthal2020ramon,EKLM2025,IZ2026}, providing simpler arguments and sharper insights for the Eldan--Gross inequality \eqref{eq:Eldan-Gross}.

In a recent work \cite{ChangLi2026}, the authors introduced a unified semigroup-based \textit{variance-decay framework} for proving first-order Talagrand-type inequalities in the quantum Boolean setting. They established the principle that the short-time sharp decay of variance along the semigroup trajectories, which can be easily estimated via hypercontractivity, directly leads to a Talagrand-type inequality with the corresponding rate. Following this observation, they deduced quantum versions of classical results including Talagrand's isoperimetric-type inequality \eqref{eq:Tal-iso1} and the Eldan--Gross inequality \eqref{eq:Eldan-Gross}, together with an $\Leb^p$--$\Leb^q$ quantum energy-type analogue of \eqref{eq:Tal-KKL}. Applying this framework to high-order partial variance functionals (direct generalizations of the semigroup representation of variance), they also obtained a high-order quantum version of the influence inequality \eqref{eq:Tal-KKL}, albeit with a weaker logarithmic power ($1$ instead of the optimal $k$) compared to those in \cite{Tanguy2020,P2025,Blecher2024geometricinfluencesquantumboolean}.

While high-order versions of the influence inequality \eqref{eq:Tal-KKL} have been obtained, the high-order extensions of other energy-type inequalities --- such as Talagrand's isoperimetric-type inequality \eqref{eq:Tal-iso1} and the Eldan--Gross inequality \eqref{eq:Eldan-Gross} --- have remained unknown.

\subsection{Notations and settings}\label{subsec:settings}

Before stating the main theorems, here we first fix some basic notations and settings used throughout this paper.

We write $X \gtrsim Y$ (resp. $X \lesssim Y$) to denote $X \geq C Y$ (resp. $X \leq C Y$) for an absolute constant $C > 0$ that is independent of all underlying parameters (such as the dimension $n$, the derivative order $k$, and the space exponents $p, q$). We write $X \asymp Y$ if both $X \gtrsim Y$ and $X \lesssim Y$ hold. When the hidden constant depends on a specific parameter $w$, we append it as a subscript, writing $\gtrsim_w$, $\lesssim_w$, and $\asymp_w$.

Let $\Nbb := \{0, 1, \ldots\}$ denote the set of natural numbers, and let $\Rbb$ denote the set of real numbers. Throughout, $n$ represents an arbitrary positive integer, which typically signifies the dimension of the underlying spaces. We also use the standard shorthand $[n] := \{1, \ldots, n\}$ and $\Nbb_{\leq n} := \{0, 1, \ldots, n\}$.

The notation $|\cdot|$ is used in three distinct ways depending on the context: for a vector $x\in \Rbb^n$, we let $|x|:=\sqrt{\sum_i |x_i|^2}$ denote its Euclidean modulus (reducing to the usual absolute value when $n=1$); for a multi-index $\alpha\in \Nbb^n$, we let $|\alpha|:=\sum_i \alpha_i$ denote the sum of its components; for a subset $S\subseteq [n]$, we let $|S|$ denote its cardinality.

Two fundamental settings are considered in this paper: Boolean and Gaussian. In the Boolean setting, we consider the $n$-dimensional Boolean cube $\cube^n$ equipped with the uniform probability measure $\sigma_n$; in the Gaussian setting, we consider the $n$-dimensional Euclidean space $\Rbb^n$ equipped with the Gaussian measure $\gamma_n(\dif x)= \varphi_n(x)\dif x$, where $\varphi_n(x):= (2 \uppi)^{-n/2} \upe^{-\left|x\right|^{2}/2}$ denotes the standard Gaussian density. In both settings, we let $\Eop$ denote expectation, $\Var$ variance, $\left\|\,\cdot\,\right\|_{\Leb^p}$ the $\Leb^p$ norm, and $\langle\cdot,\cdot\rangle_{\Leb^2}$ the $\Leb^2$ inner product, all taken w.r.t. the underlying probability measure.

In the Boolean setting, the $i$-th discrete derivative of $f:\cube^n\to \Rbb$ is defined by
\begin{equation*}
    \Dop_i f(x) := \frac{f(x^{(i\mapsto 1)})-f(x^{(i\mapsto -1)})}{2},\quad x\in \cube^n,
\end{equation*}
where $x^{(i\mapsto \pm 1)}$ denotes the Boolean vector obtained via replacing the $i$-th coordinate of $x$ by $\pm 1$. The high-order discrete derivatives are then defined as follows: for nonempty $J\subseteq [n]$, we define $\Dop_J:=\prod_{i\in J} \Dop_i$; and for $J=\varnothing$, we adopt the convention $\Dop_{\varnothing}:=\Id$ (the identity operator). Moreover, for $k\in \Nbb_{\leq n}$, we define the $k$-th gradient tensor and its modulus by
\begin{equation*}
    \grad^k f:=(\Dop_J f)_{J\subseteq[n]:|J|=k},\quad \left|\grad^k f\right|^2 := \sum_{J\subseteq[n]:|J|=k} \left|\Dop_J f\right|^2.
\end{equation*}
For $f:\cube^n\to \Rbb$, we let $f^{\geq k}$ denote the component of $f$ orthogonal to $\Nul(\grad^k)$ (the null space of $\grad^k$) w.r.t. $\langle\cdot,\cdot\rangle_{\Leb^2}$, and write $\Wop^{\geq k}(f):= \left\|f^{\geq k}\right\|_{\Leb^2}^2$ (which coincides with the Fourier weight of $f$ of order at least $k$, see Section~\ref{subsec:Fourier} for more details). In particular, when $k=1$, $\grad$ annihilates only constant functions, and hence $\Wop^{\geq 1}(f)=\Var(f)$; when $k=0$, by convention $\grad^0 = \Id$, and hence $\Wop^{\geq 0}(f) = \left\|f\right\|_{\Leb^2}^2$.

In the Gaussian setting, we consider the \textit{normalized} high-order (weak) derivatives rather than the usual ones. Let $\Del_i:=\frac{\partial}{\partial x_i}$ denote the usual (weak) partial derivative operator over the $i$-th coordinate in $\Rbb^n$. For nonzero $\kappa\in \Nbb^n$, we define $\Dop^{\kappa}:= \prod_{i\in [n]} \frac{1}{\sqrt{\kappa_i !}} \Del_i^{\kappa_i}$; and for $\kappa=\ovec$, we adopt the convention $\Dop^{\ovec}:=\Id$. Moreover, for $k\in \Nbb$, we define the \textit{normalized} $k$-th (weak) gradient tensor and its modulus by
\begin{equation*}
    \grad^k f:=(\Dop^{\kappa} f)_{\kappa\in \Nbb^n:|\kappa|=k},\quad \left|\grad^k f\right|^2 := \sum_{\kappa\in \Nbb^n:|\kappa|=k} \left|\Dop^{\kappa} f\right|^2,
\end{equation*}
and let $\Sob^k(\Rbb^n,\gamma_n):=\left\{f\in \Leb^2(\Rbb^n,\gamma_n): \Dop^\kappa f \in \Leb^2(\Rbb^n,\gamma_n)\text{ for all } |\kappa|\leq k \right\}$ denote the usual Gaussian Sobolev space of order $k$. For $f\in \Leb^2(\Rbb^n,\gamma_n)$, we similarly let $f^{\geq k}$ denote the component of $f$ orthogonal to $\Nul(\grad^k)$ and write $\Wop^{\geq k}(f):= \left\|f^{\geq k}\right\|_{\Leb^2}^2$. We also have $\Wop^{\geq 1}(f)=\Var(f)$ and $\Wop^{\geq 0}(f) = \left\|f\right\|_{\Leb^2}^2$.
\begin{remark}
Here the modulus of the normalized $k$-th gradient tensor is also proportional to the usual one $\grad_{\text{usual}}^k f:=(\Del_{i_1}\ldots \Del_{i_k} f)_{i_1,i_2\ldots,i_k\in [n]}$,
\begin{align*}
    \left|\grad_{\text{usual}}^k f\right|^2 &= \sum_{i_1,\ldots,i_k\in [n]} \left| \Del_{i_1} \cdots \Del_{i_k} f \right|^2\\
    &= \sum_{\kappa\in \Nbb^n:|\kappa|=k} \binom{k}{\kappa_1,\ldots, \kappa_n} \left|\Del_1^{\kappa_1} \cdots \Del_n^{\kappa_n} f\right|^2 = k! \left|\grad^k f\right|^2.
\end{align*}
Compared to the usual high-order derivatives, the use of normalized ones provides a more concise presentation of results in the Gaussian setting, with algebraic structural properties and constants exactly aligned to the Boolean case.
\end{remark}

\subsection{Our results}\label{subsec:results}

Classical Talagrand-type inequalities have become cornerstones of Boolean analysis and high-dimensional probability. Questions regarding their high-order analogues --- relating high-order energies $\left\|\left|\grad^k f\right|\right\|_{\Leb^p}^{p}$ to high-order Fourier weights $\Wop^{\geq k}(f)$ --- arise naturally when one studies functions with $k$-wise interactions or when one seeks sharp concentration beyond the first Fourier level. Despite considerable interest, a systematic high-order theory has been missing; in particular, the high-order analogues of the isoperimetric-type and Eldan--Gross inequalities have remained open.

In this paper, we refine the approach in \cite{ChangLi2026} and develop a unified \textit{high-order variance-decay framework} for proving high-order generalizations of the aforementioned Talagrand-type inequalities. Using this framework, we establish high-order Poincar\'e-type inequalities, $\Leb^p$--$\Leb^q$ inequalities, isoperimetric-type inequalities, and Falik--Samorodnitsky \& Eldan--Gross inequalities, all with sharp logarithmic powers and explicit constants.

\subsubsection*{High-order Poincar\'{e}-type inequalities}

Our first result yields high-order Poincar\'{e}-type inequalities in both Boolean and Gaussian settings, establishing direct relations between high-order energies and Fourier weights. While these inequalities are quantitatively weaker than the other energy-type inequalities presented below, they have the advantage of sharper constants.

More specifically, in the Boolean setting, we prove the following:
\begin{theorem}\label{thm:high-order-Poincare-type-Boolean}
For all $k\in \Nbb_{\leq n}$, $f:\cube^n\to [-1,1]$, and $p\in [1,2]$,
\begin{equation}\label{eq:high-order-Poincare-type-Boolean}
    \left\|\left|\grad^k f\right|\right\|_{\Leb^p}^{p} \geq C^{\text{\rm\ref{thm:high-order-Poincare-type-Boolean}}}(p)^k\Wop^{\geq k}(f),
\end{equation}
with $C^{\text{\rm\ref{thm:high-order-Poincare-type-Boolean}}}(p)=2^{1-p}/\Beta(\frac{1}{2};\frac{p}{2},\frac{p}{2})$, where $\Beta(x;a,b) := \int_0^x t^{a-1} \left(1-t\right)^{b-1} \dif t$ is the incomplete Beta function.
\end{theorem}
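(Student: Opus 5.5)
The plan is to follow the variance-decay approach of \cite{ChangLi2026}, working with the heat semigroup $\Pop_t=\upe^{-t\Lop}$ on $\cube^n$, where $\Lop$ is the number operator and $\Pop_t$ multiplies each Fourier coefficient $\hat f(S)$ by $\upe^{-t|S|}$. The argument has three steps:
\begin{itemize}
\item write $\Wop^{\geq k}(f)$ as a time integral of a $k$-th order energy pairing;
\item bound that pairing pointwise by $|\grad^k f|^p$ times a function of $t$ alone;
\item integrate in $t$, which should produce exactly the incomplete Beta constant.
\end{itemize}

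\textbf{Step 1: semigroup representation.} From the Fourier expansion of Section~\ref{subsec:Fourier},
\begin{equation*}
\sum_{|J|=k}\big\langle \Dop_J f,\Dop_J \Pop_t f\big\rangle_{\Leb^2}=\sum_{S}\binom{|S|}{k}\upe^{-t|S|}\hat f(S)^2 .
\end{equation*}
Integrating against a suitable weight $w_k(t)\,\dif t$ gives, for every $|S|\ge k$, a coefficient $\binom{|S|}{k}\int_0^\infty w_k(t)\upe^{-t|S|}\dif t\ \ge 1$. For example, take $w_k$ so that this integral equals $\binom{|S|}{k}^{-1}$ at $|S|=k$, and check monotonicity in $|S|$. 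This yields
\begin{equation*}
\Wop^{\geq k}(f)\le \int_0^\infty w_k(t)\,\Eop\big\langle \grad^k f,\grad^k\Pop_t f\big\rangle\,\dif t .
\end{equation*}
The case $p=2$ already gives constant $1$, since $\|\,|\grad^k f|\,\|_{\Leb^2}^2=\sum_S\binom{|S|}{k}\hat f(S)^2\ge\Wop^{\geq k}(f)$. This matches $C(2)=2^{-1}/\Beta(\tfrac12;1,1)=1$, so the representation should be normalized to be exact at $p=2$.

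\textbf{Step 2: pointwise bound on the pairing.} There are two pointwise controls on $|\grad^k \Pop_t f|$:
\begin{itemize}
\item commutation, $\Dop_J\Pop_t=\upe^{-kt}\Pop_t\Dop_J$, so that $|\grad^k\Pop_t f|\le \upe^{-kt}\,\Pop_t|\grad^k f|$;
\item a high-order Bismut-type estimate for $f$ valued in $[-1,1]$,
\begin{equation*}
|\grad^k\Pop_t f|\le \Big(\frac{\upe^{-t}}{\sqrt{1-\upe^{-2t}}}\Big)^{k}.
\end{equation*}
\end{itemize}
The Bismut bound is proved one coordinate at a time: for a single coordinate, $\Dop_i\Pop_t f$ is a difference of two averages under the $\pm$-biased product kernel. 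Tensorizing over $i\in J$ and using Cauchy--Schwarz across $J$ should give the modulus bound. I would then interpolate between the two controls, using $|\grad^k f|\cdot|\grad^k \Pop_t f|\le |\grad^k f|^{p}$ integrated against $\Pop_t$ to the power $p-1$, times the Bismut bound to the power $2-p$. After averaging, the pairing is then bounded by
\begin{equation*}
\|\,|\grad^k f|\,\|_{\Leb^p}^p\cdot \upe^{-kt(p-1)}\Big(\frac{\upe^{-t}}{\sqrt{1-\upe^{-2t}}}\Big)^{k(2-p)} .
\end{equation*}
Invariance of $\sigma_n$ under $\Pop_t$ lets the $\Pop_t$-average be removed after Hölder's inequality.

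\textbf{Step 3: the time integral.} Plugging the bound from Step 2 into Step 1, the substitution $u=\upe^{-2t}$ turns the $t$-integral into an integral of the form $\int_0^{1/2}u^{a-1}(1-u)^{b-1}\dif u$ raised to a $k$-fold power. The restriction to $u\le 1/2$ comes from splitting the time range at $\upe^{-2t}=1/2$ and using the trivial bound on the complementary range. The goal is to recover exactly $\big(2^{1-p}/\Beta(\tfrac12;\tfrac p2,\tfrac p2)\big)^{k}$.

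\textbf{Main obstacle.} I expect the hardest part to be making the constant come out as exactly $C(p)^k$: the weight $w_k$, the Hölder split and the time cutoff all have to be chosen compatibly. If the single-weight approach loses the factor, the fallback is to prove the case $k=1$ with constant $C(p)$ coordinatewise, in a Bobkov/Talagrand style, and then iterate. The iteration would apply the $k=1$ bound to $\Dop_J f$ in one fresh coordinate at a time, using that $|\Dop_J f|\le 1$ keeps each step in the admissible class. A Minkowski-type inequality would then exchange the $\Leb^{p}$ and $\ell^2$ sums, which is valid for $p\le 2$.
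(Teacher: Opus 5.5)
\textbf{Verdict: genuine gap in Step 3.} Your Steps 1 and 2 are sound in outline. After the change $t\mapsto 2t$, your pairing bound $\sum_{|J|=k}\langle \Dop_J f,\Dop_J\Pop_t f\rangle_{\Leb^2}\le g(t)^k\left\|\left|\grad^k f\right|\right\|_{\Leb^p}^p$, with $g(t):=\upe^{-t}(1-\upe^{-2t})^{-(2-p)/2}$, is exactly the paper's interpolation bound (Lemma~\ref{lem:energy-bound-Boolean}). One caveat concerns the Bismut bound on the \emph{modulus}. Coordinatewise estimates plus Cauchy--Schwarz on each $J$ separately cost a factor $\binom nk^{1/2}$. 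The dimension-free bound needs the orthonormality of the $\phi_J^{t,x}$ in $\Leb^2(p_{t,x})$, as in Proposition~\ref{prop:local-derivative-identity-Boolean}.

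The gap is Step 3, which is where the whole constant lives.
\begin{itemize}
\item \emph{No $k$-th power appears.} With a linear weight you get $\Wop^{\geq k}(f)\le\left\|\left|\grad^k f\right|\right\|_{\Leb^p}^p\int_0^\infty w_k(t)\,g(t)^k\,\dif t$. This is a single integral of $g^k$, whereas the target is $C(p)^{-k}=(\int_0^\infty g\,\dif t)^k$. Nothing in your plan relates the two, and ``splitting at $\upe^{-2t}=1/2$ with a trivial bound on the complement'' cannot produce a $k$-th power.
\item \emph{The $\frac12$ is not a cutoff.} With $v=\upe^{-t}$ one has $\int_0^\infty g\,\dif t=\int_0^1(1-v^2)^{p/2-1}\dif v$, and $v=1-2s$ turns this into $2^{p-1}\Beta(\frac12;\frac p2,\frac p2)$ over the whole half-line. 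Any truncation with a crude bound loses the sharp constant.
\item \emph{The singularity at $t=0$ is not addressed.} $g(t)^k\sim(2t)^{-k(2-p)/2}$ is non-integrable at $0$ once $k(2-p)\ge 2$ (e.g.\ $p=1$, $k\ge 2$). So $w_k$ must vanish at $0$, while its Laplace transform must still dominate $\binom mk^{-1}\sim k!/m^k$. This is precisely the high-order obstruction described in the paper's outline.
\item \emph{The fallback fails structurally.} Applying the $k=1$ bound to $\Dop_J f$ ($|J|=k-1$) controls $\sum_J\left\|\left|\grad\Dop_J f\right|\right\|_{\Leb^p}^p$. For $p<2$ this quantity dominates $\left\|\left|\grad^k f\right|\right\|_{\Leb^p}^p$ rather than the reverse. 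The $\Leb^{p/2}$ reverse-Minkowski exchange only leaves you with $\sum_J (C(p)\Var(\Dop_J f))^{2/p}$, which is not bounded below by $(C(p)\sum_J\Var(\Dop_J f))^{2/p}$ without a dimension-dependent loss.
\end{itemize}

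\textbf{How the paper closes it.} The paper uses the Besov-type functional $\Vop^k(f)=(\int_0^\infty 2\left\|\left|\grad^k\Pop_s f\right|\right\|_{\Leb^2}^{2/k}\dif s)^k$. Reverse Minkowski in $\Leb^{1/k}(\dif s)$ together with $\binom mk\ge(m/k)^k$ gives $\Vop^k(f)\ge\Wop^{\geq k}(f)$. Taking $1/k$-th powers of the interpolation bound \emph{before} integrating turns the integrand into a time-rescaling of $g$ itself. That integrand is integrable, so $(\int_0^\infty g)^k$ appears automatically.

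\textbf{How your route could be rescued.} You need an ingredient absent from the proposal.
\begin{itemize}
\item Choose the Gamma weight $w_k(t)=\frac{k^k}{(k-1)!}t^{k-1}$. Step 1 then reduces to the same inequality $\binom mk(k/m)^k\ge 1$.
\item Write $\int_0^\infty w_k g^k\,\dif t=\Eop[\tilde g(T)^k]$, where $\tilde g(t):=(1-\upe^{-2t})^{-(2-p)/2}$ and $T=\frac1k(E_1+\dots+E_k)$ for i.i.d.\ standard exponentials $E_i$.
\item Since $\ln\tilde g$ is convex, $\tilde g(T)^k\le\prod_i\tilde g(E_i)$ pointwise. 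Hence $\int_0^\infty w_k g^k\,\dif t\le(\Eop\,\tilde g(E_1))^k=(\int_0^\infty g\,\dif t)^k=C(p)^{-k}$.
\end{itemize}
This variant even gives a constant at least as good as $C(p)^k$. As written, however, your Step 3 does not prove the theorem for any $k\ge 2$ with $p<2$.
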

\begin{remark}
The explicit expression ensures that $C^{\text{\rm\ref{thm:high-order-Poincare-type-Boolean}}}(p)$ has a uniform positive lower bound. When $p=2$, $C^{\text{\rm\ref{thm:high-order-Poincare-type-Boolean}}}(2)=1$, and \eqref{eq:high-order-Poincare-type-Boolean} yields the canonical high-order version of the Poincar\'e inequality \eqref{eq:boolean-Poincare}. When $p=1$, $C^{\text{\rm\ref{thm:high-order-Poincare-type-Boolean}}}(1)=2/\pi$, which matches the best known explicit constant for the $\Leb^1$ Poincar\'e inequality on the Boolean cube (see \cite{EL2008,Ivanisvili2019improvingconstantendpointpoincare}).
\end{remark}

In the Gaussian setting, we similarly have:
\begin{theorem}\label{thm:high-order-Poincare-type-Gaussian}
For all $k\in \Nbb$, $f\in \Sob^k(\Rbb^n,\gamma_n)$ with $\left\|f\right\|_{\Leb^\infty}\leq 1$, and $p\in [1,2]$,
\begin{equation}\label{eq:high-order-Poincare-type-Gaussian}
    \left\|\left|\grad^k f\right|\right\|_{\Leb^p}^{p} \geq C^{\text{\rm\ref{thm:high-order-Poincare-type-Gaussian}}}(p)^k\Wop^{\geq k}(f),
\end{equation}
with $C^{\text{\rm\ref{thm:high-order-Poincare-type-Gaussian}}}(p) = C^{\text{\rm\ref{thm:high-order-Poincare-type-Boolean}}}(p) \gtrsim 1$.
\end{theorem}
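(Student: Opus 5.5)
We prove Theorem~\ref{thm:high-order-Poincare-type-Gaussian} by the Ornstein--Uhlenbeck analogue of the argument we intend for Theorem~\ref{thm:high-order-Poincare-type-Boolean}. Let $P_t$ be the Ornstein--Uhlenbeck semigroup on $(\Rbb^n,\gamma_n)$ and expand $f$ in normalized Hermite polynomials $H_\alpha$, $\alpha\in\Nbb^n$. Two algebraic facts will be used. First, $\Dop^\kappa H_\alpha=\sqrt{\binom{\alpha}{\kappa}}H_{\alpha-\kappa}$, with all binomials taken coordinatewise. Second, summing over $|\kappa|=k$ gives $\left\|\left|\grad^k g\right|\right\|_{\Leb^2}^2=\sum_\alpha\binom{|\alpha|}{k}\hat g(\alpha)^2$; the normalization $1/\sqrt{\kappa_i!}$ is exactly what produces this Vandermonde identity. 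The space $f^{\geq k}$ is the span of the $H_\alpha$ with $|\alpha|\geq k$.

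\textbf{Step 1: variance-type representation of $\Wop^{\geq k}(f)$.} We look for an identity expressing $\Wop^{\geq k}(f)$ as a time integral of the pairing between $\grad^k f$ and a smoothed $k$-th gradient. Using $\Dop^\kappa P_t=\upe^{-kt}P_t\Dop^\kappa$, a direct computation on Hermite coefficients gives
\[
\int_0^\infty \frac{(1-\upe^{-2t})^{k-1}}{(k-1)!}\,2\upe^{-2kt}\,
\bigl\langle \grad^k f,\; P_{2t}\grad^k f\bigr\rangle_{\Leb^2}\,\dif t
=\sum_{|\alpha|\geq k}\hat f(\alpha)^2\binom{|\alpha|}{k}\int_0^\infty\cdots .
\]
The time weight is to be tuned so that the $t$-integral equals $\binom{|\alpha|}{k}^{-1}$. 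A Beta integral in $s=\upe^{-2t}$ yields exactly $\binom{m}{k}^{-1}$ for $m=|\alpha|$, which is the key algebraic lemma and is identical in the Boolean case. In the Boolean case this is where the Fourier weight of order at least $k$ (Section~\ref{subsec:Fourier}) enters. We split the pairing as $\langle P_t\grad^k f,\,P_t\grad^k f\rangle$ and bound it by Hölder:
\[
\bigl|\langle \grad^k f, P_{2t}\grad^k f\rangle\bigr|\le \left\|\left|\grad^k f\right|\right\|_{\Leb^p}\,\left\|\left|P_{2t}\grad^k f\right|\right\|_{\Leb^{p'}} .
\]

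\textbf{Step 2: high-order Bismut estimate.} Write $P_{2t}\Dop^\kappa f=\upe^{2kt}\Dop^\kappa P_{2t}f$. By the Mehler formula this equals $\upe^{2kt}$ times an integral of $f$ against normalized Hermite polynomials in the Gaussian noise, with scale $(\upe^{4t}-1)^{-k/2}$. Summing over $\kappa$ and using $|f|\le 1$ gives the pointwise bound $\left|P_{2t}\grad^k f\right|\le c_k(t)$. We then interpolate: $\left\|\left|P\grad^kf\right|\right\|_{\Leb^{p'}}\le \left\|\left|P\grad^kf\right|\right\|_{\Leb^\infty}^{1-p/p'}\left\|\left|P\grad^kf\right|\right\|_{\Leb^p}^{p/p'}$, together with contractivity $\left\|\left|P_s\grad^kf\right|\right\|_{\Leb^p}\le \upe^{ks}\left\|\left|\grad^k f\right|\right\|_{\Leb^p}$. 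This produces $\left\|\left|\grad^k f\right|\right\|_{\Leb^p}^{p}$ times a time integral, and that integral must come out as $C(p)^{-k}$.

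\textbf{Step 3 and main obstacle: obtaining the exact constant $C(p)^k$.} Stated naively, the Bismut $\Leb^\infty$ bound involves $\Eop|H_\kappa|$ and does not factorize into a $k$-th power of a one-dimensional constant. To get the $k$-th power we would first try to prove the inequality for $k=1$ with constant $C(p)$, viewing it as the dimension-free one-variable $\Leb^p$ Poincaré inequality via the semigroup with the incomplete Beta weight $\Beta(\tfrac12;\tfrac p2,\tfrac p2)$. We would then induct on $k$. The plan is to apply the order-one inequality to $g=\Dop^\kappa f$ (or to its Boolean analogue $\Dop_J f$) coordinatewise, using a tensorized $\Leb^p$ form, and to use $\Wop^{\geq1}(\Dop^\kappa f)$ together with Hermite algebra to pass from order $k-1$ to order $k$. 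The obstruction is that $\Dop^\kappa f$ is no longer bounded by $1$. So the induction should instead run on the semigroup representation, with the Beta weight appearing once per order through the product structure of the Mehler kernel in the $k$ fresh Gaussian directions. We expect this factorization to be the hardest part. The Gaussian constant should equal the Boolean constant by the central limit theorem, via Boolean approximation of $f$, which gives a fallback proof of Theorem~\ref{thm:high-order-Poincare-type-Gaussian} from Theorem~\ref{thm:high-order-Poincare-type-Boolean}.
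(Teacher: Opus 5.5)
Your proposal has a genuine gap exactly where you locate it: nothing in Steps~1--3 actually produces the constant $C(p)^k$. The induction on $k$ fails for the reason you give (boundedness is lost for $\Dop^\kappa f$). The remark about ``the Beta weight appearing once per order through the Mehler kernel'' is not an argument. The CLT fallback presupposes the Boolean theorem, which you have not proved either, together with a nontrivial limit theorem for $\Leb^p$ norms of high-order discrete derivatives.

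Step~2 also has two concrete problems.
\begin{itemize}
\item The contraction you use carries a spurious factor. Jensen gives $\|\,|\Pop_s\grad^k f|\,\|_{\Leb^p}\le\|\,|\grad^k f|\,\|_{\Leb^p}$ with no $\upe^{ks}$. With your factor at $s=2t$, the time integral diverges whenever $p\ge 1+1/k$.
\item The $\Leb^\infty$ bound should not go entry by entry through $\Exp|h_\kappa|$, which is what makes you fear dimension dependence. Since $\{\psi^{t,x}_\kappa\}_\kappa$ is orthonormal in $\Leb^2(p_{t,x})$, Bessel's inequality (equivalently Proposition~\ref{prop:local-derivative-identity-Gaussian}) gives the dimension-free bound $|\grad^k\Pop_t f|\le(\upe^{2t}-1)^{-k/2}\|f\|_{\Leb^\infty}$. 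Hence $|\Pop_{2t}\grad^k f|\le(1-\upe^{-4t})^{-k/2}$.
\end{itemize}
Finally, the weight you display does not integrate to $\binom{m}{k}^{-1}$ for $k\ge 2$. The correct weight is $2k\,\upe^{-2t}(1-\upe^{-2t})^{k-1}$, which amounts to the identity $\Wop^{\ge k}(f)=2k\int_0^\infty(\upe^{2t}-1)^{k-1}\|\,|\grad^k\Pop_t f|\,\|_{\Leb^2}^2\dif t$.

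The paper avoids the factorization problem through the Besov-type functional $\Vop^k(f)=(\int_0^\infty 2\|\,|\grad^k\Pop_t f|\,\|_{\Leb^2}^{2/k}\dif t)^k$. Taking the $1/k$-th power of the interpolation bound in Lemma~\ref{lem:energy-bound-Gaussian} turns the $k$-fold singular weight into the $k$-th power of one one-dimensional integral, $\int_0^\infty 2\upe^{-2(p-1)s}(\upe^{4s}-1)^{-(1-p/2)}\dif s=2^{p-1}\Beta(\frac12;\frac p2,\frac p2)$. The bound $\Vop^k(f)\ge\Wop^{\ge k}(f)$ from Proposition~\ref{prop:high-order-var-bound-Gaussian} then finishes.

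That said, your own Steps~1--2, once corrected as above, close without any induction or CLT. They give $\Wop^{\ge k}(f)\le K_k(p)\,\|\,|\grad^k f|\,\|_{\Leb^p}^p$, where $K_k(p)=k\int_0^1(1-s)^{k-1}g(s)^k\dif s$ and $g(s):=(1-s^2)^{-(1-p/2)}$. The factor $k(1-s)^{k-1}$ is the density of $\min_i U_i$ for i.i.d.\ uniform $U_1,\dots,U_k$ on $[0,1]$. Since $g$ is nonnegative and increasing,
\[
K_k(p)=\Exp\big[g(\min_i U_i)^k\big]\le\Exp\Big[\prod_i g(U_i)\Big]=\Big(\int_0^1 g\Big)^k .
\]
Moreover $\int_0^1 g=2^{p-1}\Beta(\frac12;\frac p2,\frac p2)=C(p)^{-1}$. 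This elementary comparison is the factorization you were missing, and it is not tight: for instance $K_k(1)\le 2$ for every $k$.
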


\subsubsection*{High-order $\Leb^p$--$\Leb^q$ inequalities}

Next, we present high-order $\Leb^p$--$\Leb^q$ generalizations of Talagrand's influence inequality \eqref{eq:Tal-KKL} together with energy-type analogues in both Boolean and Gaussian settings.

More specifically, in the Boolean setting, we establish the following high-order $\Leb^p$--$\Leb^q$ influence inequality:
\begin{theorem}\label{thm:high-order-Lp-Lq-inf-Boolean}
For all $k\in \Nbb_{\leq n}$, $f:\cube^n\to [-1,1]$, $p\in [1,2]$ and $q\in [1,2)$,
\begin{equation}\label{eq:high-order-Lp-Lq-inf-Boolean}
    \sum_{J\subseteq[n]:|J|=k}\frac{\left\|\Dop_J f\right\|_{\Leb^p}^{p}}{\big[1+\frac{q}{2k(2-q)} \ln^{+}(\left\|\Dop_J f\right\|_{\Leb^p}^{p}/\left\|\Dop_J f\right\|_{\Leb^q}^{2})\big]^{k}} \geq C^{\text{\rm\ref{thm:high-order-Lp-Lq-inf-Boolean}}}(q)^k \Wop^{\geq k}(f),
\end{equation}
where $C^{\text{\rm\ref{thm:high-order-Lp-Lq-inf-Boolean}}}(q) = \big(1+\frac{q}{2\upe(2-q)}\big)^{-1} \gtrsim 2-q$.
\end{theorem}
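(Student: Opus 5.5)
We prove the inequality with a high-order variance-decay argument driven by the noise semigroup $\Top_t=\upe^{-t\Lop}$ on $\cube^n$, where $\Lop=\sum_i\Dop_i^2$ up to sign conventions, so $\Top_t$ multiplies the level-$|S|$ Fourier coefficient by $\upe^{-t|S|}$.

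\emph{Step 1: a semigroup representation of the Fourier weight.} We use the fact that $\Dop_J\Top_t=\upe^{-kt}\Top_t\Dop_J$ for $|J|=k$. For a Walsh character $\chi_S$ with $|S|=m\ge k$, the quantity $\sum_{|J|=k}\|\Dop_J\chi_S\|_{\Leb^2}^2$ equals $\binom{m}{k}$. The coefficient $\binom mk$ is recovered by a $k$-fold integral in time. Concretely, we look for a Besov-type functional
\[
\mathcal V_k(f)=\int_0^\infty \rho_k(t)\sum_{|J|=k}\|\Top_t\Dop_J f\|_{\Leb^2}^2\,\dif t
\]
with a weight $\rho_k(t)\propto t^{k-1}$, chosen so that $\mathcal V_k(f)$ dominates $\Wop^{\geq k}(f)$. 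On $\chi_S$ this integral gives $\binom mk\cdot c_k/(2(m-k))^{k}$-type expressions, and we need these to be $\geq1$ uniformly in $m\ge k$. If that fails, we instead use the iterated representation obtained by applying the first-order identity $\Var(g)=2\int_0^\infty\sum_i\|\Top_t\Dop_i g\|^2\dif t$ $k$ times. This yields a lower bound of the form $\Wop^{\geq k}(f)\le \int_{\Rbb_+^k}2^k\sum_{|J|=k}\|\Top_{t_1+\dots+t_k}\Dop_J f\|_{\Leb^2}^2\,\dif t$, with ordered indices accounted for by the factor $k!$ versus ordered sums. Symmetrizing over orderings gives the $t^{k-1}/(k-1)!$ density.

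\emph{Step 2: hypercontractive short-time decay per $J$.} For each $J$, write $g_J=\Dop_J f$. Two bounds control $\|\Top_s g_J\|_{\Leb^2}^2$. Hypercontractivity gives $\|\Top_s g_J\|_{\Leb^2}^2\le\|g_J\|_{\Leb^q}^2$ once $\upe^{-2s}\le q-1$. For small $s$ we instead interpolate, using Riesz–Thorin/log-convexity between $\Leb^q$ and $\Leb^2$ along the semigroup, to get $\|\Top_s g_J\|_2^2\le\|g_J\|_q^{2\theta(s)}\|g_J\|_2^{2(1-\theta(s))}$ with $\theta(s)\approx \frac{2-q}{q}\cdot 2s$. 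Since $|f|\le1$ we have $|g_J|\le1$, hence $\|g_J\|_2^2\le\|g_J\|_p^p$ for $p\le2$, and we replace $\|g_J\|_2^2$ by $A_J:=\|g_J\|_p^p$. The integrand is then bounded by $\min\{A_J,\ A_J\,\upe^{-\theta(s)\ln^+(A_J/\|g_J\|_q^2)}\}$.

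\emph{Step 3: integrate.} With $s=t_1+\dots+t_k$ and density $2^k s^{k-1}/(k-1)!$, we get
\[
\int_0^\infty \frac{2^k s^{k-1}}{(k-1)!}\,\upe^{-2ks}\,\upe^{-\frac{2(2-q)}{q}sL}\dif s=\Big(1+\tfrac{2-q}{kq}L\Big)^{-k},
\]
where $L=\ln^+(A_J/\|g_J\|_q^2)$. Here the factor $\upe^{-2ks}$ is the decay coming from $\Dop_J$ commuting past $\Top$. Summing over $J$ reproduces the left side of the theorem, up to the normalization of $L$. The constant mismatch between $\frac{q}{2k(2-q)}$ and the exact rate, together with the constant $C(q)=(1+\frac{q}{2\upe(2-q)})^{-1}$, is absorbed by splitting at the hypercontractive time $s_0=\frac12\ln\frac1{q-1}$. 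We also use the elementary inequality $(1+a/k)^k\le$ something controlled by $\upe$-factors, which explains the $\upe$ in $C(q)$.

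The main obstacle is Step 1 combined with the exponent bookkeeping in Step 3. We must get a genuine inequality $\Wop^{\geq k}(f)\le 2^k\!\int\frac{s^{k-1}}{(k-1)!}\sum_J\|\Top_s\Dop_Jf\|_2^2\dif s$ that holds uniformly over levels $m\ge k$, rather than only at level $k$. We must also make the interpolation exponent $\theta(s)$ linear in $s$ with the right constant over all times. For the first, we would check the level-$m$ coefficient $\binom mk\big(\frac{2}{2m}\big)^k\cdot\frac{(k-1)!k}{k!}\ge$ something $\geq1$, which follows from $\binom mk\ge (m/k)^k$ after adjusting the weight. For the second, the fallback is to use hypercontractivity only in the form $\|\Top_s g\|_2\le\|g\|_{1+\upe^{-2s}}$ and then apply Hölder between $\Leb^q$ and $\Leb^2$, accepting the loss that yields $C(q)$.
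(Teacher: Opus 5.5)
Your plan is the classical semigroup proof of Talagrand's inequality lifted to order $k$: a \emph{linear} time-representation of $\Wop^{\geq k}(f)$, with hypercontractivity applied to $\Dop_J f$ at every time $s$. That is not the paper's route, and as written it does not prove the statement. The bookkeeping in Steps~1--2 is wrong, though repairable.

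The first-order identity is $\Var(g)=2\int_0^\infty\sum_i\|\Dop_i\Pop_t g\|_{\Leb^2}^2\,\dif t$. With $\Pop_t\Dop_i$ and no factor $\upe^{-2t}$ it diverges at level $1$, and likewise your $k$-fold formula with $\Pop_{t_1+\cdots+t_k}\Dop_J$ diverges at level $k$. With $\Dop_J\Pop_s$ (the factor $\upe^{-2ks}$ you insert in Step~3) and density $2^k s^{k-1}/(k-1)!$, the level-$m$ coefficient is $\binom{m}{k}m^{-k}$. This equals $k^{-k}<1$ at $m=k$, so it does not dominate $\Wop^{\geq k}(f)$. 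Iterating the first-order identity actually produces the growing weights $\upe^{2\sum_j(j-1)t_j}$, not your formula. The weight that works is $\rho_k(s)=\frac{(2k)^k s^{k-1}}{(k-1)!}$ on $\|\Dop_J\Pop_s f\|_{\Leb^2}^2$, which gives $\binom{m}{k}(k/m)^k\geq 1$.

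Next, hypercontractivity plus log-convexity give $\|\Pop_s g\|_{\Leb^2}^2\leq\|g\|_{\Leb^2}^{2(1-\vartheta(s))}\|g\|_{\Leb^q}^{2\vartheta(s)}$ with $\vartheta(s)=\min\{\frac{q}{2-q}\tanh s,1\}$, not $\approx\frac{2(2-q)}{q}s$. Your exponent yields the rate $\frac{2-q}{kq}$, which vanishes as $q\to 2$ instead of blowing up. With $\rho_k$ and the linearization $\frac{q}{2-q}s$, the Gamma integral becomes exactly $(1+\frac{q}{2k(2-q)}L)^{-k}$, so the target rate is at least consistent.

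The genuine gap is Step~3. For $s>0$ we have $\vartheta(s)\leq\frac{q}{2-q}\tanh s<\frac{q}{2-q}s$, and for $q>1$ it saturates at $1$ once $s\geq s_0=\frac12\ln\frac{1}{q-1}$. So the linearized exponential is a \emph{lower} bound for $\upe^{-\vartheta(s)L}$, and your closed form bounds nothing. The nonlinearity is not negligible either. The weight $\rho_k(s)\upe^{-2ks}$ is the $\Gamma(k,2k)$ density, with mean $\frac12$ and concentrating there as $k$ grows, so order-one times dominate; for $q>1+\upe^{-1}$ that mean lies in the saturated range. What you actually need is
\[
\int_0^\infty \rho_k(s)\,\upe^{-2ks-\vartheta(s)L}\,\dif s\;\leq\; C(q)^{-k}\Big(1+\frac{q}{2k(2-q)}L\Big)^{-k}\qquad\text{for all } L\geq 0,
\]
and the proposal gives no argument for it. It is also sharp. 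As $q\to 2$ the left side tends to $\upe^{-L}$ and the right side to $(k/(\upe L))^k$, which agree at $L=k$. So ``splitting at $s_0$'' plus an unspecified bound on $(1+a/k)^k$ has no slack to work with.

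The paper avoids this through the Besov-type functional $\Vop_J(f)=(\int_0^\infty 2\|\Dop_J\Pop_t f\|_{\Leb^2}^{2/k}\dif t)^k$, with $\sum_J\Vop_J(f)\geq\Wop^{\geq k}(f)$ by reverse Minkowski in $\Leb^{1/k}$. It applies hypercontractivity at the \emph{shift} time $\epsilon$ to every $\Dop_J\Pop_s f$, then H\"older in $s$, to get $\Vop_J(\Pop_\epsilon f)\leq \upe^{-2k\Rscr\tanh\epsilon}\Vop_J(f)$. Here $\Rscr=1+\frac{q}{2k(2-q)}\ln^{+}(\Vop_J(f)/\|\Dop_J f\|_{\Leb^q}^2)$. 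It then compares right derivatives at $\epsilon=0$, where $-\frac{\dif}{\dif\epsilon}\Vop_J(\Pop_\epsilon f)^{1/k}=2\|\Dop_J f\|_{\Leb^2}^{2/k}$. This gives $\|\Dop_J f\|_{\Leb^2}^2\geq \Rscr^k\Vop_J(f)$ with constant $1$: only $\epsilon\to 0$ matters, so neither $\tanh$ nor saturation costs anything. The constant $C(q)$ then arises purely algebraically. One trades $\ln^{+}(\Vop_J(f)/\|\Dop_J f\|_{\Leb^q}^2)$ for $\ln^{+}(\|\Dop_J f\|_{\Leb^p}^p/\|\Dop_J f\|_{\Leb^q}^2)$ using $\ln^{+}x\leq x/\upe$ and $\ln^{+}(xy)\leq\ln^{+}x+\ln^{+}y$. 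Your linear functional cannot use this derivative trick: for $k\geq 2$ its $\epsilon$-derivative at $0$ is $-\int_0^\infty\rho_k'(s)\|\Dop_J\Pop_s f\|_{\Leb^2}^2\dif s$, not the energy.
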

\begin{remark}
Here \eqref{eq:high-order-Lp-Lq-inf-Boolean} attains an optimal logarithmic power of $k$ compared to the quantum high-order influence inequality in \cite[Corollary 4.5]{ChangLi2026}. When $p=2$ and $q=1$, \eqref{eq:high-order-Lp-Lq-inf-Boolean} yields a $k$-th order generalization of Talagrand's influence inequality \eqref{eq:Tal-KKL} with constant $C^{\text{\rm\ref{thm:high-order-Lp-Lq-inf-Boolean}}}(1)\geq 0.844$. In particular, when restricted to $f:\cube^n\to \cube$, \eqref{eq:high-order-Lp-Lq-inf-Boolean} further yields the high-order influence inequality in \cite{P2025}:
\begin{equation}
    \sum_{J\subseteq[n]:|J|=k}\frac{\Inf_J(f)}{\big[1+\ln(1/\Inf_J(f))\big]^{k}} \geq \left(\frac{C^{\text{\rm\ref{thm:high-order-Lp-Lq-inf-Boolean}}}(1)}{2k}\right)^{k} \Wop^{\geq k}(f),
\end{equation}
where $\Inf_J(f):= \left\|\Dop_J f\right\|_{\Leb^2}^2$ is the high-order influence of $J$. Note that for $\cube$-valued $f$, we have $\left|\Dop_J f\right|\in 2^{1-|J|} \Nbb$, and thus $\left\|\Dop_J f\right\|_{\Leb^1} \leq 2^{|J|-1} \Inf_J(f)$.
\end{remark}

Analogously, we also prove a high-order $\Leb^p$--$\Leb^q$ energy inequality:
\begin{theorem}\label{thm:high-order-Lp-Lq-energy-Boolean}
For all $k\in \Nbb_{\leq n}$, $f:\cube^n\to [-1,1]$, and $1\leq q\leq p\leq 2$,
\begin{equation}\label{eq:high-order-Lp-Lq-energy-Boolean}
    \frac{\left\|\left|\grad^k f\right|\right\|_{\Leb^p}^{p}}{\big[1+\frac{q}{2k(2-q)} \ln^{+}(\left\|\left|\grad^k f\right|\right\|_{\Leb^p}^{p}/\left\|\left|\grad^k f\right|\right\|_{\Leb^q}^{2})\big]^{kp/2}} \geq C^{\text{\rm\ref{thm:high-order-Lp-Lq-energy-Boolean}}}(p,q)^k \Wop^{\geq k}(f),
\end{equation}
with $C^{\text{\rm\ref{thm:high-order-Lp-Lq-energy-Boolean}}}(p,q) = \left(M(p)^{-\frac{2}{p}}+\frac{pq}{4\upe(2-q)}\right)^{-\frac{p}{2}}  \gtrsim 2-q$, where $M(p) := p \, 2^{-\frac{p}{2}} \max_{x\geq 0} \frac{1-\upe^{-x}}{x^{p/2}}$.
\end{theorem}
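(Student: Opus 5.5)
We run the high-order variance-decay scheme with the Besov-type functional $\Vop_k(f) := \int_0^\infty \frac{t^{k-1}}{(k-1)!}\,\|\grad^k P_t f\|_{\Leb^2}^2\,\dif t$ (up to normalisation), where $P_t$ is the heat semigroup on $\cube^n$. On the Walsh expansion one has $\Vop_k(f)=\sum_{|S|\geq k}c_{k,|S|}\hat f(S)^2$, with $c_{k,|S|}$ of order one. This lets us compare $\Wop^{\geq k}(f)$ with the short-time piece $\int_0^T$ plus a tail. For $|S|\geq k$, $\|\grad^k P_t f\|_{\Leb^2}^2 = \sum \binom{|S|}{k}e^{-2t|S|}\hat f(S)^2$. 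Integrating $t^{k-1}$ against this gives a constant $\gtrsim 1$ times $\hat f(S)^2$. The integral over $[T,\infty)$ is at most $e^{-2T}$-type times $\Wop^{\geq k}$, up to polynomial factors. We will choose $T$ so that the tail absorbs half of $\Wop^{\geq k}(f)$, leaving
\[
\Wop^{\geq k}(f)\lesssim \int_0^T \frac{t^{k-1}}{(k-1)!}\|\grad^k P_t f\|_{\Leb^2}^2\,\dif t .
\]

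Next we commute: $\Dop_J P_t = \upe^{-kt}P_t\Dop_J$. Hypercontractivity then gives $\|P_t \Dop_J f\|_{\Leb^2}\le \|\Dop_J f\|_{\Leb^{r(t)}}$ with $r(t)=1+\upe^{-2t}$. The energy version requires the vector-valued form: apply hypercontractivity to the $\ell^2$-valued function $\grad^k f$. Together with Jensen/Minkowski, since $P_t$ is a Markov averaging, this yields $\|\grad^kP_tf\|_{\Leb^2}^2\le \upe^{-2kt}\|\,|\grad^k f|\,\|_{\Leb^{r}}^2$. For $r(t)\ge q$ we interpolate $\Leb^{r}$ between $\Leb^q$ and $\Leb^p$, and also between $\Leb^p$ and $\Leb^\infty$ when $r>p$. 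The sup-norm bound $|\grad^k f|\lesssim$ const comes from $|f|\le1$, since every $\Dop_J f$ takes values in $[-1,1]$. Writing $A=\|\,|\grad^k f|\,\|_{\Leb^p}^p$ and $B=\|\,|\grad^kf|\,\|_{\Leb^q}^2$, this gives a bound of the form $A\cdot (B/A)^{\theta(t)}$, where $\theta(t)$ grows like $\frac{2-q}{q}\,t$ for small $t$.

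Integrating $t^{k-1}\upe^{-2kt}A(B/A)^{\theta(t)}$ over $t$ produces $A/[1+\frac{q}{2k(2-q)}\ln^+(A/B)]^{k}$. The power $kp/2$ rather than $k$ arises because for $p<2$ the $\Leb^2$-energy must first be converted to the $\Leb^p$-energy. That conversion uses a bounded-range estimate of the type $\|g\|_2^2\le$ const$\cdot\|g\|_p^p$, which is exactly where $M(p)$ enters via $\max_x(1-\upe^{-x})/x^{p/2}$. Hölder in $t$ then transfers the power $p/2$ to the whole bracket. Finally we optimise over $T$ and collect constants into $C(p,q)^k$.

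The main obstacle is the interpolation step. The $\Leb^{r(t)}$ norm of $|\grad^k f|$ must be controlled simultaneously by the $\Leb^p$ and $\Leb^q$ norms, with exponent $\theta(t)$ linear in $t$ at the exact rate $\frac{q}{2(2-q)}$, so that the log power and the constant come out as stated. I would first try a Riesz–Thorin/log-convexity bound $\|g\|_r\le\|g\|_q^{1-\lambda}\|g\|_p^{\lambda}$ together with $\|g\|_\infty$ control. If that does not reach the claimed constant, I would instead split $[0,T]$ at the time where $r(t)=q$, using the $\Leb^q$ bound after it and the trivial $\Leb^p$/$M(p)$ bound before it.
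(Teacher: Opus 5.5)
There is a genuine gap. It sits at the step you flagged, and it cannot be fixed by choosing a better interpolation.

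\textbf{The sup bound is false, and the chain fails on a concrete example.} You claim $|\nabla^k f|\lesssim 1$ because every $D_J f$ takes values in $[-1,1]$. But $|\nabla^k f|^2=\sum_{|J|=k}|D_J f|^2$ has $\binom{n}{k}$ terms. For $f=\chi_{[n]}$ one has $|D_J f|\equiv 1$ for all $J$, hence $|\nabla^k f|\equiv\binom{n}{k}^{1/2}$.

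For this $f$, $\|\,|\nabla^k f|\,\|_{L^r}^2=\binom{n}{k}$ for every $r$. On the other hand $A=\binom{n}{k}^{p/2}\le B=\binom{n}{k}$, so the bracket equals $1$ and the left-hand side of the theorem is just $A$. Take $n>k$ and $p<2$.
\begin{itemize}
\item Your target pointwise bound $\|\,|\nabla^k f|\,\|_{L^{r(t)}}^2\le A(B/A)^{\theta(t)}$ fails whenever $\theta(t)<1$.
\item Your spectrum-independent tail bound forces $T$ of order one. Then $\int_0^T w(t)\,\mathrm{e}^{-2kt}\|\,|\nabla^k f|\,\|_{L^{r(t)}}^2\,\mathrm{d}t$ is of order $\binom{n}{k}$, which exceeds $A$ by the factor $\binom{n}{k}^{1-p/2}$.
\end{itemize}
So applying hypercontractivity to the \emph{unsmoothed} gradient and then comparing with $A$ is intrinsically dimension-dependent for $p<2$.

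Your account of the power $kp/2$ and of $M(p)$ also does not describe a working mechanism. A genuine bounded-range estimate $\|g\|_{L^2}^2\le C\|g\|_{L^p}^p$ would give the power $k$. That is exactly what happens for the individual $D_J$ in the influence inequality, where $\|D_J f\|_{L^\infty}\le 1$ really holds.

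\textbf{What is missing: smoothing before the sup bound.} The only dimension-free sup control of the gradient tensor comes after smoothing: $\|\,|\nabla^k P_t f|\,\|_{L^\infty}\le(\mathrm{e}^{2t}-1)^{-k/2}\|f\|_{L^\infty}$. This follows from the local Parseval identity $P_t(f^2)=\sum_J(\mathrm{e}^{2t}-1)^{|J|}|D_J P_t f|^2$. The paper feeds it in through $\|\,|\nabla^k P_t f|\,\|_{L^2}^2=\sum_{|J|=k}\langle D_J f, D_J P_{2t} f\rangle$ and H\"older, giving $\|\,|\nabla^k P_t f|\,\|_{L^2}^2\le \mathrm{e}^{-2k(p-1)t}(\mathrm{e}^{4t}-1)^{-k(1-p/2)}A$. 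This bound is singular at $t=0$ and carries no logarithmic gain.

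\textbf{Where the logarithmic gain comes from in the paper.} The gain is obtained separately, by hypercontractivity applied to the functional $\mathcal{V}^k(f)=\bigl(\int_0^\infty 2\|\,|\nabla^k P_s f|\,\|_{L^2}^{2/k}\,\mathrm{d}s\bigr)^k$.
\begin{itemize}
\item There $L^{1+\mathrm{e}^{-2t}}$ is interpolated only between $L^q$ and $L^2$, so no $L^\infty$ bound is needed. H\"older in $s$ folds the $L^2$ part back into $\mathcal{V}^k(f)$.
\item The result is $\mathcal{V}^k(P_\epsilon f)\le\mathrm{e}^{-2kR\tanh\epsilon}\,\mathcal{V}^k(f)$, with $R=1+\frac{q}{2k(2-q)}\ln^+(\mathcal{V}^k(f)/B)$.
\item The flat measure together with the $2/k$ power gives the exact identity $\mathcal{V}^k(f)^{1/k}-\mathcal{V}^k(P_\epsilon f)^{1/k}=\int_0^\epsilon 2\|\,|\nabla^k P_s f|\,\|_{L^2}^{2/k}\,\mathrm{d}s$. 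So the decay estimate and the singular energy bound meet on the same window $[0,\epsilon]$.
\item Your weight $t^{k-1}$ also makes $t^{-k(1-p/2)}$ integrable, but it loses this identity. The tail $\int_\epsilon^\infty w(t)\|\,|\nabla^k P_t f|\,\|_{L^2}^2\,\mathrm{d}t$ is not your functional evaluated at $P_\epsilon f$.
\item Balancing at $\tanh\epsilon\sim 1/R$ produces $R^{kp/2}$. The constant $M(p)$ arises as $\max_x(1-\mathrm{e}^{-x})/x^{p/2}$ with $x=2R\tanh\epsilon$.
\end{itemize}
Finally, $\mathcal{V}^k\ge W^{\ge k}$, and $\ln^+(W^{\ge k}/B)$ is converted into $\ln^+(A/B)$ via $\mathrm{e}^{-1}x\ge\ln^+x$. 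This conversion is the source of the term $\frac{pq}{4\mathrm{e}(2-q)}$ in $C(p,q)$; nothing in your scheme corresponds to it.

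\textbf{Two smaller points.}
\begin{itemize}
\item With the weight $t^{k-1}/(k-1)!$, the coefficient is $c_{k,m}=\binom{m}{k}(2m)^{-k}$. This can be as small as $(2k)^{-k}$, so it is not of order one without a $k$-dependent renormalisation.
\item The hypercontractive exponent is $\frac{q}{2-q}\tanh t$, not $\frac{2-q}{q}t$. Since $B/A<1$ and $\tanh t\le t$, you cannot replace it by a linear function of $t$ inside an upper bound.
\end{itemize}
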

\begin{remark}
The explicit expression ensures that $M(p)$ also has a uniform positive lower bound. Here \eqref{eq:high-order-Lp-Lq-energy-Boolean} provides a classical high-order analogue of the first-order quantum $\Leb^p$--$\Leb^q$ energy inequality in \cite[Corollary 4.4]{ChangLi2026}. Compared to the influence inequality \eqref{eq:high-order-Lp-Lq-inf-Boolean}, the energy inequality \eqref{eq:high-order-Lp-Lq-energy-Boolean} (and also other energy-type inequalities presented below) has logarithmic power $kp/2$ rather than $k$. This difference stems from the fact that discrete derivatives are bounded ($\left\|\Dop_J\right\|_{\Leb^\infty\to \Leb^\infty}\leq 1$), whereas the gradients only satisfy a dimension-free Bismut-type bound $\left\|\left|\grad^k \Pop_t f\right|\right\|_{\Leb^\infty} \lesssim t^{-k/2}\left\|f\right\|_{\Leb^\infty}$ under the smoothing of the noise semigroup $\Pop_t$ (see Section \ref{sec:preliminaries} for more details).
\end{remark}

In the Gaussian setting, we also have the high-order energy and influence inequalities:
\begin{theorem}\label{thm:high-order-Lp-Lq-Gaussian}
For all $k\in \Nbb$, $f\in \Sob^k(\Rbb^n,\gamma_n)$ with $\left\|f\right\|_{\Leb^\infty}\leq 1$, and $1\leq q\leq p\leq 2$,
\begin{equation}\label{eq:high-order-Lp-Lq-energy-Gaussian}
    \frac{\left\|\left|\grad^k f\right|\right\|_{\Leb^p}^{p}}{\big[1+\frac{q}{2k(2-q)} \ln^{+}(\left\|\left|\grad^k f\right|\right\|_{\Leb^p}^{p}/\left\|\left|\grad^k f\right|\right\|_{\Leb^q}^{2})\big]^{kp/2}} \geq C^{\text{\rm\ref{thm:high-order-Lp-Lq-Gaussian}}}(p,q)^k \Wop^{\geq k}(f),
\end{equation}
and further,
\begin{equation}\label{eq:high-order-Lp-Lq-inf-Gaussian}
    \sum_{\kappa\in \Nbb^n:|\kappa|=k}\frac{\left\|\Dop^\kappa f\right\|_{\Leb^p}^{p}}{\big[1+\frac{q}{2k(2-q)} \ln^{+}(\left\|\Dop^\kappa f\right\|_{\Leb^p}^{p}/\left\|\Dop^\kappa f\right\|_{\Leb^q}^{2})\big]^{kp/2}} \geq C^{\text{\rm\ref{thm:high-order-Lp-Lq-Gaussian}}}(p,q)^k \Wop^{\geq k}(f),
\end{equation}
with $C^{\text{\rm\ref{thm:high-order-Lp-Lq-Gaussian}}}(p,q) = C^{\text{\rm\ref{thm:high-order-Lp-Lq-energy-Boolean}}}(p,q) \gtrsim 2-q$.
\end{theorem}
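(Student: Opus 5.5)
We will prove Theorem~\ref{thm:high-order-Lp-Lq-Gaussian} by running the same high-order variance-decay scheme as in the Boolean case (Theorem~\ref{thm:high-order-Lp-Lq-energy-Boolean}), with the Ornstein--Uhlenbeck semigroup $\Pop_t$ on $(\Rbb^n,\gamma_n)$ in place of the Boolean noise semigroup. The Gaussian constant should then coincide with the Boolean one, as the statement asserts.

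\emph{Step 1: Besov-type variance functional.} Work in the Hermite expansion $f=\sum_\alpha \hat f(\alpha) H_\alpha$, where $\Dop^\kappa$ acts on normalized Hermite polynomials exactly as $\Dop_J$ acts on Walsh characters, up to multi-index combinatorics. Then $\Wop^{\geq k}(f)=\sum_{|\alpha|\geq k}\hat f(\alpha)^2$. Introduce the $k$-th order functional
\[
\Vop_k(f):=\int_0^\infty \frac{t^{k-1}}{(k-1)!}\,\left\|\left|\grad^k \Pop_t f\right|\right\|_{\Leb^2}^2\,\dif t ,
\]
computed weight by weight. Using $\sum_{|\kappa|=k}\|\Dop^\kappa H_\alpha\|^2=\binom{|\alpha|}{k}$ and $\Dop^\kappa\Pop_t=\upe^{-kt}\Pop_t\Dop^\kappa$, we get $\Vop_k(f)=\sum_\alpha c_k(|\alpha|)\hat f(\alpha)^2$ with $c_k(m)\geq 1$ for $m\geq k$ (and zero for $m<k$). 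Hence $\Vop_k(f)\geq \Wop^{\geq k}(f)$, and this holds for the truncated integral over $[0,T]$ up to a tail factor. Both the coefficient identity and the normalization of the Besov weight should match the Boolean computation verbatim.

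\emph{Step 2: split the time integral at $T$.}
\begin{itemize}
\item \emph{Small times $t\leq T$.} Write $\|\,|\grad^k\Pop_tf|\,\|_2^2=\langle \grad^k\Pop_t f,\upe^{-kt}\Pop_t\grad^k f\rangle$. Bound one factor by the dimension-free Gaussian Bismut estimate $\|\,|\grad^k\Pop_s g|\,\|_\infty\lesssim (1-\upe^{-2s})^{-k/2}\|g\|_\infty$, which comes from iterating the Mehler formula via Gaussian integration by parts. Then use contractivity $\|\,|\Pop_t\grad^kf|\,\|_p\leq\|\,|\grad^kf|\,\|_p$ together with interpolation $\|h\|_2^2\leq\|h\|_\infty^{2-p}\|h\|_p^p$. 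This yields a contribution $\lesssim T^{kp/2}\,\|\,|\grad^k f|\,\|_p^p/M(p)$-type; the function $(1-\upe^{-x})/x^{p/2}$ in $M(p)$ arises exactly from optimizing this $\Leb^\infty$ kernel bound.
\item \emph{Large times $t\geq T$.} Use Nelson hypercontractivity, $\|\Pop_t\|_{q\to2}\leq1$ for $\upe^{-2t}\leq q-1$. Applied to the vector $\grad^kf$ via Minkowski, with the commutation above, it gives $\|\,|\grad^k\Pop_tf|\,\|_2^2\leq \upe^{-2kt}\|\,|\grad^kf|\,\|_q^2$ beyond time $\frac12\ln\frac1{q-1}$. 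For intermediate times we use Riesz--Thorin interpolation between $q$ and $2$, giving the factor $\frac{q}{2(2-q)}$.
\end{itemize}

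\emph{Step 3: optimize $T$.} Choose $T\approx\frac{q}{2k(2-q)}\ln^+(\|\,|\grad^kf|\,\|_p^p/\|\,|\grad^kf|\,\|_q^2)$. This produces the bracket raised to the power $kp/2$, and the constant $C(p,q)$ follows from the elementary maximization over $x$, exactly as in the Boolean proof.

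\emph{Influence version \eqref{eq:high-order-Lp-Lq-inf-Gaussian}.} Do the same coordinatewise: decompose $\Vop_k=\sum_\kappa\int \frac{t^{k-1}}{(k-1)!}\|\Dop^\kappa\Pop_tf\|_2^2\,\dif t$ and choose a separate time $T_\kappa$ for each $\kappa$. Unlike the Boolean case, $\Dop^\kappa$ is unbounded on $\Leb^\infty$, so the small-time bound must again use Bismut for the single component $\Dop^\kappa\Pop_s$. This is why the power is $kp/2$ rather than $k$.

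\emph{Main obstacle.} The hardest step is the sharp dimension-free high-order Bismut estimate with the exact constant matching the Boolean $\|\Dop_J\Pop_t\|$-type bound, so that $C$ is identical. My first attempt is to write $\Dop^\kappa\Pop_tf(x)=(\upe^{-t}/\sqrt{1-\upe^{-2t}})^k\,\Eop[f(\upe^{-t}x+\sqrt{1-\upe^{-2t}}Z)\,H_\kappa(Z)]$. By duality, for unit $\theta$, $\sup|\sum_\kappa\theta_\kappa \Dop^\kappa\Pop_t f|$ is then bounded by $\|\sum\theta_\kappa H_\kappa(Z)\|_{\Leb^1}\leq 1$, using Cauchy--Schwarz and orthonormality. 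A secondary technical point is justifying the computations for $f\in\Sob^k$ with $\|f\|_\infty\leq1$, which we handle by density of polynomials and Fatou.
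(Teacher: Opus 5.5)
\textbf{The proposal has a genuine gap: the large-time part of your Gamma-weighted functional is never controlled.}

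Your Bismut duality argument is fine. It gives $\left\|\left|\grad^k\Pop_t f\right|\right\|_{\Leb^\infty}\le(\upe^{2t}-1)^{-k/2}\left\|f\right\|_{\Leb^\infty}$, the same bound the paper gets from its local Parseval identity. The problem is the functional you build the argument on.

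First, a computational slip. With weight $t^{k-1}/(k-1)!$ you get $c_k(m)=\binom{m}{k}(2m)^{-k}\le 1/(2^k k!)$, so $c_k(m)\ge 1$ fails for every $m$. You need an extra factor $(2k)^k$, which turns your functional into $\sum_{m\ge k}\binom{m}{k}(k/m)^k\Wop^m(f)$.

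The genuine gap is the large-time part.
\begin{itemize}
\item Your bound for $t\ge T$ interpolates $\left\|\left|\grad^k f\right|\right\|_{\Leb^{1+\upe^{-2t}}}$ between $\Leb^q$ and $\Leb^2$. It therefore carries a factor $\left\|\left|\grad^k f\right|\right\|_{\Leb^2}^{2(1-\vartheta)}$ with $\vartheta<1$, and for $q=1$ this holds for all $t$.
\item In the Gaussian setting nothing in the theorem controls this factor. Take $k=1$, $p=q=1$ and $f=\tanh(x_1/\delta)$: $\left\|\left|\grad f\right|\right\|_{\Leb^1}$ and $\Var(f)$ stay bounded, while $\left\|\left|\grad f\right|\right\|_{\Leb^2}^2\asymp 1/\delta$.
\item The Boolean influence case escapes this through $\left\|\Dop_J f\right\|_{\Leb^2}^2\le\left\|\Dop_J f\right\|_{\Leb^p}^p$. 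No analogue holds for $\Dop^\kappa$ or $\grad^k$.
\item You also cannot bound the tail by the functional itself. Since $t^{k-1}$ is not translation invariant, $\int_T^\infty t^{k-1}\left\|\left|\grad^k\Pop_t f\right|\right\|_{\Leb^2}^2\dif t$ is not your functional evaluated at $\Pop_T f$. If you shift by $T$, apply hypercontractivity over that step, and then use H\"older in time against your weight, the dual weight behaves like $u^{-(k-1)(1-\vartheta)/\vartheta}$ near $u=0$. That is non-integrable for $k\ge 2$ and small $\vartheta$.
\end{itemize}

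Your choice of $T$ is also inverted. Let $\Rscr$ denote the bracket. A small-time contribution of order $T^{kp/2}\left\|\left|\grad^k f\right|\right\|_{\Leb^p}^p$ yields the gain $\Rscr^{kp/2}$ only if $T\asymp 1/\Rscr$, not $T\asymp\Rscr$.

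\textbf{The missing idea is to rebalance the power rather than the time weight.} The paper uses
$\Vop^k(f)=\bigl(\int_0^\infty 2\left\|\left|\grad^k\Pop_s f\right|\right\|_{\Leb^2}^{2/k}\dif s\bigr)^k$. This buys three things.
\begin{itemize}
\item The flat measure gives the exact splitting $\Vop^k(f)^{1/k}-\Vop^k(\Pop_\epsilon f)^{1/k}=\int_0^\epsilon 2\left\|\left|\grad^k\Pop_s f\right|\right\|_{\Leb^2}^{2/k}\dif s$.
\item The power $2/k$ turns the Bismut singularity $s^{-k(1-p/2)}$ into the integrable $s^{-(1-p/2)}$.
\item Hypercontractivity inside the $s$-integral, followed by H\"older in $s$, gives the self-referential decay $\Vop^k(\Pop_\epsilon f)\le\upe^{-2k\epsilon}\Vop^k(f)^{1-\vartheta}\left\|\left|\grad^k f\right|\right\|_{\Leb^q}^{2\vartheta}$.
\end{itemize}
In that decay estimate the $\Leb^2$ norms of $\grad^k\Pop_s f$ are absorbed into $\Vop^k(f)$. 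That quantity sits on the favourable side of the final inequality and is then bounded below by $\Wop^{\ge k}(f)$, so $\left\|\left|\grad^k f\right|\right\|_{\Leb^2}$ never needs to be controlled.

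Optimizing over $\epsilon\asymp 1/\Rscr$ with the Beta-integral bound produces exactly $M(p)^k$. The constant $C(p,q)$ then comes from converting $\ln^+(\Wop^{\ge k}(f)/\left\|\left|\grad^k f\right|\right\|_{\Leb^q}^2)$ into $\ln^+(\left\|\left|\grad^k f\right|\right\|_{\Leb^p}^p/\left\|\left|\grad^k f\right|\right\|_{\Leb^q}^2)$. This uses $\upe^{-1}x\ge\ln^+x$ and subadditivity of $\ln^+$, and the step is absent from your plan. Your claim that the constant comes out as in the Boolean proof therefore has no mechanism behind it. Even a repaired Gamma-weighted argument would produce different $k$-dependent constants.
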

\begin{remark}
Here the logarithmic power $kp/2$ in the Gaussian influence inequality \eqref{eq:high-order-Lp-Lq-inf-Gaussian} relies similarly on $\left\|\Dop^\kappa \Pop_t f\right\|_{\Leb^\infty} \lesssim t^{-|\kappa|/2}\left\|f\right\|_{\Leb^\infty}$ under the Ornstein--Uhlenbeck semigroup $\Pop_t$. When $q=1$, \eqref{eq:high-order-Lp-Lq-inf-Gaussian} provides a high-order refinement of the $\Leb^p$--$\Leb^1$ influence inequality in \cite[Theorem 6]{CL2012hypercontr}, and further when $p=2$, \eqref{eq:high-order-Lp-Lq-inf-Gaussian} recovers the high-order $\Leb^2$--$\Leb^1$ influence inequality in \cite[Theorem 12]{Tanguy2020} with constant $C^{\text{\rm\ref{thm:high-order-Lp-Lq-Gaussian}}}(2,1)\geq 0.844$.
\end{remark}

\subsubsection*{High-order isoperimetric-type inequalities}

Using the high-order $\Leb^p$--$\Leb^q$ inequalities, we further derive high-order $\Leb^p$ extensions of Talagrand's isoperimetric-type inequality \eqref{eq:Tal-iso1} together with KKL-type analogues in both Boolean and Gaussian settings.

More specifically, in the Boolean setting, the following high-order isoperimetric-type inequality is deduced from the high-order energy inequality \eqref{eq:high-order-Lp-Lq-energy-Boolean} in Theorem \ref{thm:high-order-Lp-Lq-energy-Boolean}:
\begin{theorem}\label{thm:high-order-Tal-isoper-Boolean}
For all $k\in \Nbb_{\leq n}$, $f:\cube^n\to [-1,1]$, and $p\in [1,2)$,
\begin{equation}\label{eq:high-order-Tal-isoper-Boolean}
    \left\|\left|\grad^k f\right|\right\|_{\Leb^p}^{p} \geq C^{\text{\rm\ref{thm:high-order-Tal-isoper-Boolean}}}(p)^k\Wop^{\geq k}(f) \left[1+ \frac{1}{2k}\ln\left(\frac{1}{\Wop^{\geq k}(f)}\right)\right]^\frac{kp}{2},
\end{equation}
with $C^{\text{\rm\ref{thm:high-order-Tal-isoper-Boolean}}}(p) = \left(M(p)^{-\frac{2}{p}}+\frac{p}{2\upe(2-p)}\right)^{-\frac{p}{2}}  \gtrsim 2-p$.
\end{theorem}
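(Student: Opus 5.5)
The plan is to derive \eqref{eq:high-order-Tal-isoper-Boolean} from the high-order energy inequality \eqref{eq:high-order-Lp-Lq-energy-Boolean} of Theorem~\ref{thm:high-order-Lp-Lq-energy-Boolean}, specialized to $q=p\in[1,2)$. Write $E:=\left\|\left|\grad^k f\right|\right\|_{\Leb^p}^{p}$ and $W:=\Wop^{\geq k}(f)$. Since $\|f\|_{\Leb^\infty}\le 1$, we have $W\le\|f\|_{\Leb^2}^2\le 1$, so $\ln(1/W)\ge 0$; we may also assume $W>0$. With $q=p$ the denominator's ratio is $E/E^{2/p}=(1/E)^{(2-p)/p}$. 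Hence
\[
\frac{q}{2k(2-q)}\ln^{+}\big(E/\left\|\left|\grad^k f\right|\right\|_{\Leb^q}^{2}\big)=\frac{1}{2k}\ln^{+}(1/E),
\]
and Theorem~\ref{thm:high-order-Lp-Lq-energy-Boolean} becomes
\[
g(E):=\frac{E}{\big[1+\frac{1}{2k}\ln^{+}(1/E)\big]^{kp/2}}\ \ge\ C^{\text{\rm\ref{thm:high-order-Lp-Lq-energy-Boolean}}}(p,p)^k\, W .
\]
This is an implicit inequality in $E$. The remaining task is to replace $\ln^+(1/E)$ by $\ln(1/W)$ at the cost of a constant.

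The key observation is that $g$ is nondecreasing on $(0,\infty)$, since both $E$ and $1/[1+\ln^+(1/E)/(2k)]$ increase with $E$. Set $L:=1+\frac{1}{2k}\ln(1/W)\ge 1$. Argue by contradiction: suppose $E<E_0:=C'^k W L^{kp/2}$, where $C'\le 1$ is the target constant. Monotonicity gives $g(E_0)\ge g(E)\ge C^k W$, with $C:=C^{\text{\rm\ref{thm:high-order-Lp-Lq-energy-Boolean}}}(p,p)$, that is,
\[
C'^k L^{kp/2}\ \ge\ C^k\Big[1+\tfrac{1}{2k}\ln^+(1/E_0)\Big]^{kp/2}.
\]
Next bound $\ln(1/E_0)=\ln(1/W)-k\ln C'-\frac{kp}{2}\ln L$ from below. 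Drop $-k\ln C'\ge 0$ and use $\ln L\le L-1=\ln(1/W)/(2k)$. This gives $\ln(1/E_0)\ge(1-\frac p4)\ln(1/W)$, hence $1+\frac1{2k}\ln^+(1/E_0)\ge (1-\frac p4)L$. Substituting yields $C'^{2/p}\ge C^{2/p}(1-\frac p4)$. So every $C'$ with $C'^{2/p}<C^{2/p}(1-p/4)$ produces a contradiction, and we conclude $E\ge C'^kWL^{kp/2}$, which is \eqref{eq:high-order-Tal-isoper-Boolean} up to the constant.

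The main obstacle is matching the exact stated constant $C^{\text{\rm\ref{thm:high-order-Tal-isoper-Boolean}}}(p)=\big(M(p)^{-2/p}+\frac{p}{2\upe(2-p)}\big)^{-p/2}$. Since $C^{2/p}=\big(M(p)^{-2/p}+\frac{p^2}{4\upe(2-p)}\big)^{-1}$, the target needs $C'^{-2/p}=M(p)^{-2/p}+\frac{p}{2\upe(2-p)}$. Here the loss falls only on the logarithmic term, by the factor $\frac{2}{p}$. So instead of the crude multiplicative factor $(1-p/4)$, I would redo the comparison additively. Write the bracket condition as
\[
C'^{-2/p}\Big(1+\tfrac{1}{2k}\ln^+(1/E_0)\Big)\ \ge\ C^{-2/p}\,\cdot\,(\ldots),
\]
and use a sharper estimate of $\frac{kp}{2}\ln L$, for instance $\ln L\le L/\upe$ combined with the $\upe$ in the denominator, so that the $\frac{p^2}{4}$ in $C$ is absorbed into $\frac{p}{2}$. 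If that bookkeeping does not close exactly, I would revert to a direct re-run of the variance-decay argument with $q=p$, choosing the time parameter in terms of $W$ rather than $E$.
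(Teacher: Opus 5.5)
You reduce the statement exactly as the paper does: take $q=p$ in Theorem~\ref{thm:high-order-Lp-Lq-energy-Boolean}, obtaining $E\ge C^kW\big[1+\frac{1}{2k}\ln^+(1/E)\big]^{kp/2}$ with $C:=C^{\text{\rm\ref{thm:high-order-Lp-Lq-energy-Boolean}}}(p,p)$, and then invert this implicit inequality. Your monotonicity/contradiction device for the inversion is also valid. The gap is the constant, which is part of the statement. As executed, you discard $-k\ln C'$ and use $\ln L\le L-1$, and this only gives $C'=C(1-\frac p4)^{p/2}$: about $0.387$ at $p=1$, against the stated $0.443$.

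The repair you sketch, bounding $\frac{kp}{2}\ln L$ on its own via $\ln L\le L/\upe$ with $-k\ln C'$ still discarded, does not close the gap either. It yields $C'^{-2/p}=C^{-2/p}/(1-\frac{p}{4\upe})$. The target is exactly additive: with $C_*:=C^{\text{\rm\ref{thm:high-order-Tal-isoper-Boolean}}}(p)$ one has $C_*^{-2/p}=C^{-2/p}+\frac{p}{4\upe}$, because $\frac{p^2}{4\upe(2-p)}+\frac{p}{4\upe}=\frac{p}{2\upe(2-p)}$. Your value is strictly larger than this, since $C\le M(p)\le 1$.

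The missing idea is \emph{where} to apply $\ln x\le x/\upe$. It must be applied to the quantity that enters linearly, $x=(E/W)^{2/(kp)}$, not to $L$. In your setup, keep the $-k\ln C'$ term and write $\ln(1/E_0)=\ln(1/W)-\frac{kp}{2}\ln(C'^{2/p}L)\ge \ln(1/W)-\frac{kp}{2\upe}C'^{2/p}L$. Then
$C'^{2/p}L\ge C^{2/p}\big[1+\frac{1}{2k}\ln^+(1/E_0)\big]\ge C^{2/p}L\big(1-\frac{p}{4\upe}C'^{2/p}\big)$,
that is, $C'^{-2/p}\le C^{-2/p}+\frac{p}{4\upe}=C_*^{-2/p}$. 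So every $C'<C_*$ gives a contradiction, and letting $C'\uparrow C_*$ proves the theorem with the stated constant.

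The paper does the same thing directly, without contradiction. From $C^{-2/p}x\ge 1+\frac{1}{2k}\ln^+(1/E)$ it adds $\frac{p}{4\upe}x\ge \frac{p}{4}\ln^+x=\frac{1}{2k}\ln^+(E/W)$. It then uses $\ln^+(1/E)+\ln^+(E/W)\ge \ln(1/W)$ to get $\big(C^{-2/p}+\frac{p}{4\upe}\big)x\ge L$. No re-run of the variance-decay argument is needed.
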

\begin{remark}
When $p=1$, \eqref{eq:high-order-Tal-isoper-Boolean} yields a $k$-th order generalization of Talagrand's isoperimetric-type inequality \eqref{eq:Tal-iso1} with constant $C^{\text{\rm\ref{thm:high-order-Tal-isoper-Boolean}}}(1)\geq 0.443$. When $p\to 2$, the constant $C^{\text{\rm\ref{thm:high-order-Tal-isoper-Boolean}}}(p)\to 0$, and the inequality becomes trivial when $p=2$. This is natural because an $\Leb^2$ isoperimetric-type inequality with a positive constant cannot hold for all real-valued functions: when $n=1$ and $k=1$, $\left\|\left|\grad f\right|\right\|_{\Leb^2}^{2} = \Var(f)$, and clearly
\begin{equation*}
    \Var(f) \gtrsim \Var(f) \ln\left(\frac{1}{\Var(f)}\right)
\end{equation*}
fails for $f$ with sufficiently small variance.
\end{remark}

Analogously, we also obtain the following high-order KKL-type isoperimetric inequality using the high-order influence inequality \eqref{eq:high-order-Lp-Lq-inf-Boolean} in Theorem \ref{thm:high-order-Lp-Lq-inf-Boolean}:
\begin{theorem}\label{thm:high-order-partial-isoper-Boolean}
For all $k\in \Nbb_{\leq n}$, $f:\cube^n\to [-1,1]$, and $p\in [1,2)$,
\begin{equation}\label{eq:high-order-partial-isoper-Boolean}
    \max_{J\subseteq[n]:|J|=k}\left\|\Dop_J f\right\|_{\Leb^p}^{p} \geq C^{\text{\rm\ref{thm:high-order-partial-isoper-Boolean}}}(p)^k\, \frac{\Wop^{\geq k}(f)}{\binom{n}{k}} \left[1+ \frac{1}{2k} \ln\left(\frac{\binom{n}{k}}{\Wop^{\geq k}(f)}\right)\right]^k,
\end{equation}
where $C^{\text{\rm\ref{thm:high-order-partial-isoper-Boolean}}}(p) = \left(1+\frac{1}{\upe(2-p)}\right)^{-1} \gtrsim 2-p$.
\end{theorem}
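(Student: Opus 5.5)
The plan is to derive \eqref{eq:high-order-partial-isoper-Boolean} from the high-order influence inequality \eqref{eq:high-order-Lp-Lq-inf-Boolean} of Theorem~\ref{thm:high-order-Lp-Lq-inf-Boolean}, applied with the \emph{same} exponent $p$ in both slots, i.e.\ $q=p\in[1,2)$. Write $a_J:=\left\|\Dop_J f\right\|_{\Leb^p}^{p}$. Since $\left\|\Dop_J\right\|_{\Leb^\infty\to\Leb^\infty}\leq 1$ and $\left\|f\right\|_{\Leb^\infty}\leq 1$, we have $a_J\in[0,1]$. The ratio inside the logarithm is $a_J/\left\|\Dop_J f\right\|_{\Leb^p}^{2}=a_J^{1-2/p}$, so
\begin{equation*}
\ln^{+}\!\left(a_J^{1-2/p}\right)=\tfrac{2-p}{p}\ln(1/a_J).
\end{equation*}
Multiplying by $\frac{p}{2k(2-p)}$ gives exactly $\frac{1}{2k}\ln(1/a_J)$. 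Theorem~\ref{thm:high-order-Lp-Lq-inf-Boolean} therefore yields
\begin{equation*}
\sum_{|J|=k} g(a_J)\geq C_0^k\,\Wop^{\geq k}(f),\qquad g(a):=\frac{a}{\left[1+\frac{1}{2k}\ln(1/a)\right]^k},\qquad C_0:=\Big(1+\tfrac{p}{2\upe(2-p)}\Big)^{-1}.
\end{equation*}
Since $p\leq 2$, we have $C_0\geq C^{\text{\rm\ref{thm:high-order-partial-isoper-Boolean}}}(p)=:C$, and the gap $C_0/C\geq 1$ is slack to be spent later.

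Next, I check that $g$ is increasing on $(0,1]$. Indeed, $\frac{\dif}{\dif a}\ln g(a)=\frac1a\bigl(1+\frac{1}{2}\,[1+\frac{1}{2k}\ln(1/a)]^{-1}\bigr)>0$. Bounding the sum by $\binom nk$ times its largest term then gives $g(m)\geq C_0^k w$, where $m:=\max_J a_J$ and $w:=\Wop^{\geq k}(f)/\binom nk\leq 1$.

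It remains to invert $g$. Set $x:=\frac1{2k}\ln(1/w)\geq 0$ and $y:=C^k w(1+x)^k$; the goal is $m\geq y$. Suppose instead $m<y$. If $y>1$, the inequality $g(m)<g(\min(y,1))$ should already be contradictory, so treat the main case $y\leq 1$. Monotonicity gives $C_0^k w\leq g(m)<g(y)$. Substituting the definition of $y$, this reduces to showing
\begin{equation*}
1+\tfrac1{2k}\ln(1/y)\;\geq\;\tfrac{C}{C_0}\,(1+x),
\end{equation*}
and the left-hand side equals $1+x-\tfrac12\ln C-\tfrac12\ln(1+x)$. Using $-\ln C\geq 0$, $\ln(1+x)\leq x$, and the slack in the constants, I expect to get at least $\frac{1+x}{2}$ or better on the left.

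This elementary inversion is the step I expect to be the main obstacle. The naive bound loses a factor of about $2$ in the bracket, which would spoil the clean constant. The inversion must therefore be carried out carefully, exploiting either $C_0>C$ or the term $-\frac12\ln C$, or both. If the naive choice $q=p$ does not leave enough room, the fallback is to apply Theorem~\ref{thm:high-order-Lp-Lq-inf-Boolean} with a different $q\leq p$. That choice changes both the coefficient in front of $\ln(1/a_J)$ and the constant $C^{\text{\rm\ref{thm:high-order-Lp-Lq-inf-Boolean}}}(q)$, and $q$ would be tuned so that $1+\frac{1}{\upe(2-p)}$ comes out exactly. The remaining steps would stay unchanged.
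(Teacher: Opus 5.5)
Your reduction is the same as the paper's. You apply Theorem~\ref{thm:high-order-Lp-Lq-inf-Boolean} with $q=p$; your computation of the bracket $1+\frac{1}{2k}\ln(1/a_J)$ and of $C_0=(1+\frac{p}{2\upe(2-p)})^{-1}$ is correct. You then bound the sum by $\binom{n}{k}$ times its largest term, and your monotonicity check of $g$ supplies a detail the paper uses silently. Finally you invert.

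\textbf{The gap.} The inversion is the only step that produces the constant $C$, and you leave it unproved. The tools you propose for it do not work. Since $C^{-1}-C_0^{-1}=\frac{1}{2\upe}$, we get $C/C_0=1-\frac{C}{2\upe}\geq 1-\frac{1}{2\upe}>0.8$. So a left-hand side of order $\frac{1+x}{2}$ is not enough. After using $\ln(1+x)\leq x$ you only have the lower bound $1+\frac{x}{2}-\frac12\ln C$, which falls below $\frac{C}{C_0}(1+x)$ for large $x$. Your fallback of taking $q<p$ does not help either. Because $\left\|\Dop_J f\right\|_{\Leb^q}\leq\left\|\Dop_J f\right\|_{\Leb^p}$, the only bound in terms of $a_J$ alone has coefficient $\frac{q(2-p)}{2kp(2-q)}<\frac{1}{2k}$ in the bracket, which is weaker than the stated form.

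\textbf{The fix.} The slack between $C_0$ and $C$ is calibrated exactly. Put $u:=1+x$ and use $1-\frac{C}{C_0}=\frac{C}{2\upe}$. Your target $1+x-\frac12\ln C-\frac12\ln(1+x)\geq\frac{C}{C_0}(1+x)$ then becomes $\frac{Cu}{\upe}\geq\ln(Cu)$. This is the tangent-line bound $\ln z\leq z/\upe$ at $z=Cu$. Both the slack $C_0>C$ and the term $-\frac12\ln C$ are needed, and equality holds at $Cu=\upe$.

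The paper does exactly this step, but directly rather than by contradiction:
\begin{itemize}
\item Write $X:=(m/w)^{1/k}$. Then $g(m)\geq C_0^k w$ reads $C_0^{-1}X\geq 1+\frac{1}{2k}\ln(1/m)$.
\item Add $\frac{1}{2\upe}X\geq\frac12\ln^+X=\frac{1}{2k}\ln^+(m/w)$ to both sides.
\item Use $\ln^+a+\ln^+b\geq\ln^+(ab)$ to obtain $(C_0^{-1}+\frac{1}{2\upe})X\geq 1+\frac{1}{2k}\ln(1/w)$.
\item Note that $C_0^{-1}+\frac{1}{2\upe}=C^{-1}$.
\end{itemize}

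\textbf{The case $y>1$.} This case is not contradictory as you suggest; if it occurred, the claim $m\geq y$ would simply be false, since $m\leq 1$. It is in fact vacuous. Writing $w=\upe^{-2kx}$ gives $w(1+x)^k=(\upe^{-2x}(1+x))^k\leq 1$, hence $y\leq C^k\leq 1$.
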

\begin{remark}
Here \eqref{eq:high-order-partial-isoper-Boolean} attains an optimal logarithmic power of $k$ compared to the quantum high-order partial isoperimetric inequality in \cite[Corollary 4.7]{ChangLi2026}, providing an isoperimetric-type strengthening of the following high-order $\Leb^p$ KKL inequality:
\begin{equation}\label{eq:high-order-Lp-KKL-Boolean}
    \max_{J\subseteq[n]:|J|=k}\left\|\Dop_J f\right\|_{\Leb^p}^{p} \gtrsim_p \Wop^{\geq k}(f) \left(\frac{\ln(n)}{n}\right)^k.
\end{equation}
When restricted to $\cube$-valued $f$, \eqref{eq:high-order-partial-isoper-Boolean} also yields the high-order KKL inequality in \cite{P2025}. See Section \ref{subsec:high-order-iso-proof} for more details.
\end{remark}

In the Gaussian setting, the high-order energy and influence inequalities in Theorem~\ref{thm:high-order-Lp-Lq-Gaussian} similarly yield the following isoperimetric-type inequalities:
\begin{theorem}\label{thm:high-order-iso-Gaussian}
For all $k\in \Nbb$, $f\in \Sob^k(\Rbb^n,\gamma_n)$ with $\left\|f\right\|_{\Leb^\infty}\leq 1$, and $p\in [1,2)$,
\begin{equation}\label{eq:high-order-iso-Gaussian}
    \left\|\left|\grad^k f\right|\right\|_{\Leb^p}^{p} \geq C^{\text{\rm\ref{thm:high-order-iso-Gaussian}}}(p)^k\Wop^{\geq k}(f) \left[1+ \frac{1}{2k}\ln\left(\frac{1}{\Wop^{\geq k}(f)}\right)\right]^\frac{kp}{2},
\end{equation}
and further,
\begin{equation}\label{eq:high-order-partial-iso-Gaussian}
    \max_{\kappa\in \Nbb^n:|\kappa|=k}\left\|\Dop^\kappa f\right\|_{\Leb^p}^{p} \geq C^{\text{\rm\ref{thm:high-order-iso-Gaussian}}}(p)^k\, \frac{\Wop^{\geq k}(f)}{\binom{n+k-1}{k}} \left[1+ \frac{1}{2k} \ln\left(\frac{\binom{n+k-1}{k}}{\Wop^{\geq k}(f)}\right)\right]^{\frac{kp}{2}},
\end{equation}
with $C^{\text{\rm\ref{thm:high-order-iso-Gaussian}}}(p) = C^{\text{\rm\ref{thm:high-order-Tal-isoper-Boolean}}}(p) \gtrsim 2-p$.
\end{theorem}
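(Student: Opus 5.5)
The plan is to deduce both inequalities in Theorem~\ref{thm:high-order-iso-Gaussian} from Theorem~\ref{thm:high-order-Lp-Lq-Gaussian} with $q=p$. This mirrors how Theorems~\ref{thm:high-order-Tal-isoper-Boolean} and \ref{thm:high-order-partial-isoper-Boolean} follow from their Boolean counterparts. Note that $C^{\text{\rm\ref{thm:high-order-Lp-Lq-Gaussian}}}(p,p)=C^{\text{\rm\ref{thm:high-order-Tal-isoper-Boolean}}}(p)$, so the constants already match.

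\emph{Energy inequality \eqref{eq:high-order-iso-Gaussian}.} Write $E:=\left\|\left|\grad^k f\right|\right\|_{\Leb^p}^p$ and $W:=\Wop^{\geq k}(f)$, and take $q=p$ in \eqref{eq:high-order-Lp-Lq-energy-Gaussian}. Since $\left\|\left|\grad^k f\right|\right\|_{\Leb^p}^2=E^{2/p}$, the log ratio is $E/E^{2/p}=E^{1-2/p}$. Hence
\begin{equation*}
E\geq C^k W \Big[1+\tfrac{p}{2k(2-p)}\cdot\tfrac{2-p}{p}\ln^{+}(1/E)\Big]^{kp/2}=C^kW\Big[1+\tfrac{1}{2k}\ln^{+}(1/E)\Big]^{kp/2},
\end{equation*}
with $C=C^{\text{\rm\ref{thm:high-order-Tal-isoper-Boolean}}}(p)$. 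It remains to replace $\ln^+(1/E)$ by $\ln(1/W)$. Assume $W>0$ and note $W\leq \|f\|_{\Leb^2}^2\leq1$. There are two cases.
\begin{itemize}
\item If $E\geq W$, we want $E\geq C^kW[1+\frac1{2k}\ln(1/W)]^{kp/2}$. Suppose instead $E<C^kW[1+\frac1{2k}\ln(1/W)]^{kp/2}$. Since $E\geq W$, the function $x\mapsto x/[1+\frac1{2k}\ln^+(1/x)]^{kp/2}$ is nondecreasing on $(0,\infty)$ (I will check this via the derivative; it reduces to $1-\frac{p}{4}\cdot\frac{1}{1+\ln(1/x)/(2k)}\geq0$). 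Applying this monotonicity to the displayed bound gives a contradiction with $W\leq E$.
\item If $E<W$, the displayed bound already gives the claim, since $\ln(1/E)\geq\ln(1/W)$.
\end{itemize}
In fact the cleanest route is the single argument that handles both cases: the displayed bound says $\phi(E)\geq C^kW$, where $\phi(x)=x/[1+\frac{1}{2k}\ln^+(1/x)]^{kp/2}$ is increasing. So $E\geq\phi^{-1}(C^kW)$. Using $C\leq1$, we then check that
\begin{equation*}
\phi\Big(C^kW\big[1+\tfrac1{2k}\ln(1/W)\big]^{kp/2}\Big)\leq C^kW .
\end{equation*}
This holds because the argument $y:=C^kW[\cdots]^{kp/2}$ satisfies $\ln^+(1/y)\geq\ln(1/W)$ whenever $y\leq W$. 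The potential issue is when $y>W$; there one uses $y\leq W[1+\ln(1/W)/(2k)]^{kp/2}$ and the elementary inequality $\ln(1/y)\geq \ln(1/W)-\frac{kp}{2}\ln(1+\frac{\ln(1/W)}{2k})$, and I expect the constant bookkeeping here to be the main obstacle. If it fails exactly, I would absorb the loss via the factor $C^k$ exactly as in the Boolean proof of Theorem~\ref{thm:high-order-Tal-isoper-Boolean}, which I would copy verbatim since the inequality \eqref{eq:high-order-Lp-Lq-energy-Gaussian} is formally identical to \eqref{eq:high-order-Lp-Lq-energy-Boolean}.

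\emph{Partial inequality \eqref{eq:high-order-partial-iso-Gaussian}.} Apply \eqref{eq:high-order-Lp-Lq-inf-Gaussian} with $q=p$. As above, each summand becomes $\phi(a_\kappa)$ with $a_\kappa=\left\|\Dop^\kappa f\right\|_{\Leb^p}^p$. The number of terms is $N=\binom{n+k-1}{k}$. Since $\phi$ is increasing, $N\phi(\max_\kappa a_\kappa)\geq C^kW$, that is, $\phi(\max a_\kappa)\geq C^kW/N$. The same inversion step as before, with $W$ replaced by $W/N$, then yields \eqref{eq:high-order-partial-iso-Gaussian}. The inversion remains valid because it only used that $W\leq 1$, and here $W/N\leq1$ as well.
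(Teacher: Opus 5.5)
The overall route is the paper's. At $q=p$ the logarithm in Theorem~\ref{thm:high-order-Lp-Lq-Gaussian} collapses to $\frac{1}{2k}\ln^{+}(1/E)$, giving $\phi(E)\geq C_0^kW$ with $C_0:=C^{\text{\rm\ref{thm:high-order-Lp-Lq-Gaussian}}}(p,p)$. Monotonicity of $\phi$ then handles the maximum over the $N=\binom{n+k-1}{k}$ multi-indices. (The derivative of $\ln\phi$ in $\ln x$ is $1+\frac{p/4}{1+\ln(1/x)/(2k)}$, with a plus sign.)

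\textbf{The gap: constants.} The step converting $\ln^{+}(1/E)$ into $\ln(1/W)$ fails, starting with the claim that the constants already match. They do not: $C_0=\bigl(M(p)^{-2/p}+\frac{p^2}{4\mathrm{e}(2-p)}\bigr)^{-p/2}$, while $C_1:=C^{\text{\rm\ref{thm:high-order-Tal-isoper-Boolean}}}(p)=\bigl(M(p)^{-2/p}+\frac{p}{2\mathrm{e}(2-p)}\bigr)^{-p/2}$. In fact $C_1^{-2/p}=C_0^{-2/p}+\frac{p}{4\mathrm{e}}$, so $C_1<C_0$.

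\textbf{The gap: no lossless inversion exists.} Because $\phi$ is an increasing bijection of $(0,\infty)$, the implication $\phi(E)\geq C^kW\Rightarrow E\geq y:=C^kW\bigl[1+\frac{1}{2k}\ln(1/W)\bigr]^{kp/2}$ is equivalent to $\phi(y)\leq C^kW$. This fails whenever $y>W$. Either $y\geq 1$ and $\phi(y)=y>W$, or $\ln(1/y)<\ln(1/W)$ and then $\phi(y)>C^kW$. Since $y>W$ as soon as $W<\exp\bigl(-2k(C^{-2/p}-1)\bigr)$, this is not a bookkeeping issue. Your first bullet's contradiction is empty for the same reason: $W\leq E<y$ is perfectly consistent with $\phi(E)\geq C^kW$.

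\textbf{The fix.} What is missing is an explicit absorption of the loss. This is what the paper does, and what your fallback only gestures at.
\begin{itemize}
\item Put $x:=(E/W)^{2/(kp)}$, so $\phi(E)\geq C_0^kW$ reads $C_0^{-2/p}x\geq 1+\frac{1}{2k}\ln^{+}(1/E)$.
\item Add $\frac{p}{4\mathrm{e}}x\geq\frac{p}{4}\ln^{+}x=\frac{1}{2k}\ln^{+}(E/W)$, which follows from $\mathrm{e}^{-1}x\geq\ln^{+}x$.
\item Use $\ln^{+}(1/E)+\ln^{+}(E/W)\geq\ln(1/W)$ to get $\bigl(C_0^{-2/p}+\frac{p}{4\mathrm{e}}\bigr)x\geq 1+\frac{1}{2k}\ln(1/W)$.
\end{itemize}
This is \eqref{eq:high-order-iso-Gaussian} with constant exactly $C_1$. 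It lands on the stated constant only because you start from the true, larger $C_0$. Starting from $C_1$, as in your display, the same step yields the strictly smaller $\bigl(C_1^{-2/p}+\frac{p}{4\mathrm{e}}\bigr)^{-p/2}$. The partial inequality then follows identically, with $W/N\leq 1$ in place of $W$ and $x:=(N\max_\kappa a_\kappa/W)^{2/(kp)}$.
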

\begin{remark}
Here \eqref{eq:high-order-partial-iso-Gaussian} yields a high-order $\Leb^p$ functional extension of the geometric influence inequality in \cite[Corollary 7]{CL2012hypercontr}, and further implies a Gaussian high-order $\Leb^p$ KKL inequality
\begin{equation}\label{eq:high-order-Lp-KKL-Gaussian}
    \max_{\kappa\in \Nbb^n:|\kappa|=k}\left\|\Dop^\kappa f\right\|_{\Leb^p}^{p} \gtrsim_p \Wop^{\geq k}(f) \left(\frac{\left(\ln(n+k-1)\right)^{\frac{p}{2}}}{n+k-1}\right)^k.
\end{equation}
See Section \ref{subsec:high-order-iso-proof} for more details.
\end{remark}

\subsubsection*{High-order Falik--Samorodnitsky \& Eldan--Gross inequalities}

Finally, we establish high-order extensions of the Falik--Samorodnitsky inequality in \cite{Falik-Samorodnitsky2007Edge-Isoperimetric} and the Eldan--Gross inequality \eqref{eq:Eldan-Gross}.

More specifically, in the Boolean setting, we prove the following:
\begin{theorem}\label{thm:high-order-Eldan-Gross-Boolean}
For all $k\in \Nbb_{\leq n}$, $f:\cube^n\to \Rbb$ and $q\in [1,2)$,
\begin{equation}\label{eq:high-order-Falik-Samorodnitsky-Boolean}
    \left\|\left|\grad^k f\right|\right\|_{\Leb^2}^{2} \geq \Wop^{\geq k}(f) \left[1+ \frac{q}{2k(2-q)}\ln^+\left(\frac{\Wop^{\geq k}(f)}{\sum_{J:|J|=k} \left\|\Dop_J f\right\|_{\Leb^q}^2} \right)\right]^k,
\end{equation}
and further for $\left\|f\right\|_{\Leb^\infty}\leq 1$ and $q\leq p<2$,
\begin{equation}\label{eq:high-order-Eldan-Gross-Boolean}
    \left\|\left|\grad^k f\right|\right\|_{\Leb^p}^{p} \geq C^{\text{\rm\ref{thm:high-order-Eldan-Gross-Boolean}}}(p,q)^k\Wop^{\geq k}(f) \left[1 + \frac{q}{2k(2-q)}\ln^+\left(\frac{1}{\sum_{J:|J|=k} \left\|\Dop_J f\right\|_{\Leb^q}^2} \right)\right]^\frac{kp}{2},
\end{equation}
with $C^{\text{\rm\ref{thm:high-order-Eldan-Gross-Boolean}}}(p,q) = \left(\frac{2}{2-q}\,M(p)^{-\frac{2}{p}}+\frac{pq}{2\upe(2-p)(2-q)}\right)^{-\frac{p}{2}} \gtrsim \left(2-p\right)\left(2-q\right)$.
\end{theorem}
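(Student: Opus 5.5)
We follow the variance-decay framework of the paper: use the Besov-type variance functional built from the noise semigroup $\Pop_t$, and compare its short-time decay with a hypercontractive bound on the right quantity.

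\textbf{Step 1: semigroup representation.} Writing $f=\sum_S \hat f(S)\chi_S$ and $T=\sum_{|J|=k}\|\Dop_J f\|_{\Leb^q}^2$, we have $\sum_{|J|=k}\|\Dop_J \Pop_t f\|_{\Leb^2}^2=\sum_S\binom{|S|}{k}\upe^{-2t|S|}\hat f(S)^2$, with $\Dop_J\Pop_t=\Pop_t\Dop_J$. The $k$-fold integral $\int_0^\infty\!\cdots$ against $t^{k-1}$, via $\int_0^\infty \frac{(2t)^{k-1}}{(k-1)!}\upe^{-2t s}\,2\dif t=s^{-k}$, produces the high-order Besov variance functional
\[
V_k(f)=\sum_{S}\binom{|S|}{k}|S|^{-k}\hat f(S)^2 ,
\]
comparable to $\Wop^{\geq k}(f)$ up to $k^k/k!$-type factors. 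The cleaner route is to work with the truncated functional
\[
\Phi(\tau)=\int_\tau^\infty \frac{2^k t^{k-1}}{(k-1)!}\sum_{|J|=k}\|\Dop_J\Pop_t f\|_{\Leb^2}^2\,\dif t,
\]
whose value at $\tau=0$ is at least $\Wop^{\geq k}(f)$, since $\binom{s}{k}s^{-k}\cdot k!\cdots$ — the normalization has to be arranged so that the $\Leb^2$ energy $\sum_S\binom{|S|}{k}\hat f(S)^2$ is recovered.

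\textbf{Step 2: the $\Leb^2$ inequality \eqref{eq:high-order-Falik-Samorodnitsky-Boolean}.} Split the time integral at $\tau$.

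For $t\geq\tau$, hypercontractivity with $\upe^{-2t}\le q-1$ at the borderline, and more generally the interpolation $\|\Pop_t g\|_{\Leb^2}\le \|g\|_{\Leb^q}^{\theta}\|g\|_{\Leb^2}^{1-\theta}$ (or directly $\|\Pop_t g\|_2^2\le\|g\|_{q(t)}^2$ with $q(t)=1+\upe^{-2t}$, Hölder-interpolated between $q$ and $2$), gives
\[
\|\Dop_J\Pop_t f\|_{\Leb^2}^2\le \|\Dop_J f\|_{\Leb^q}^{2\theta(t)}\|\Dop_J f\|_{\Leb^2}^{2(1-\theta(t))},
\]
with $\theta(t)\approx \frac{q}{2-q}\,t$ for small $t$. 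This is where the factor $\frac{q}{2(2-q)}$ enters. Summing over $J$ by Hölder yields
\[
\sum_J\|\Dop_J\Pop_tf\|_{\Leb^2}^2\le T^{\theta}E^{1-\theta},\qquad E=\big\|\,|\grad^kf|\,\big\|_{\Leb^2}^2 .
\]

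For $t\le \tau$, we instead use the Fourier inequality $\sum_S\binom{|S|}{k}\hat f(S)^2\ge \cdots$. Concretely, the weight $g_k(s)=\binom{s}{k}$ versus $\binom{s}{k}\upe^{-2ts}$ lets us bound the short-time part by $E$ times the mass $\int_0^\tau$, that is, by $E\cdot(2\tau)^k/k!$-type terms.

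Optimizing $\tau$ at $\tau\sim \frac{q}{2k(2-q)}\ln(\Wop^{\geq k}/T)$ converts this into $E\ge\Wop^{\geq k}[1+\frac{q}{2k(2-q)}\ln^+(\cdot)]^k$. The exact constant $1$ should follow from a convexity (Jensen) argument on the Fourier-level distribution $\mu(s)\propto\binom{s}{k}\hat f(s)^2$, as in the Falik--Samorodnitsky proof. I expect this exact-constant bookkeeping to be the main obstacle.

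\textbf{Step 3: the $\Leb^p$ inequality \eqref{eq:high-order-Eldan-Gross-Boolean}.} Replace the $\Leb^2$ energy in Step 2 by the $\Leb^p$ energy. For $t\le\tau$, use the Bismut-type bound $\||\grad^k\Pop_tf|\|_{\Leb^\infty}\lesssim t^{-k/2}$ together with
\[
\||\grad^k\Pop_t f|\|_{\Leb^2}^2\le \||\grad^k\Pop_t f|\|_{\Leb^\infty}^{2-p}\,\||\grad^k f|\|_{\Leb^p}^p ,
\]
exactly as in the proof of Theorem~\ref{thm:high-order-Lp-Lq-energy-Boolean}; this is what produces $M(p)$. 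The long-time part is bounded by $T^{\theta}$ times $\|f\|_\infty\le1$, which gives $\ln(1/T)$ in place of $\ln(\Wop^{\geq k}/T)$. Balancing again gives power $kp/2$. The extra factor $\frac{2}{2-q}$ in $C(p,q)$ comes from absorbing the case where $T$ is not small relative to $\Wop^{\geq k}$, handled by combining with \eqref{eq:high-order-Falik-Samorodnitsky-Boolean} and Theorem~\ref{thm:high-order-Tal-isoper-Boolean}.
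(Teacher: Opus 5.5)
Your outline does not yet prove the theorem, and the route in Steps 1--2 cannot reach it.

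\textbf{Step 1.} With your normalization, $\Phi(0)$ is exactly your $V_k(f)=\sum_S\binom{|S|}{k}|S|^{-k}\widehat f(S)^2\le \Wop^{\geq k}(f)/k!$. For $f=\chi_S$ with $|S|=k$ it equals $k^{-k}$, so the claim $\Phi(0)\ge\Wop^{\geq k}(f)$ is false for $k\ge 2$. The weight $\frac{(2k)^k t^{k-1}}{(k-1)!}$ repairs this: it gives $\Psi(f):=\sum_S\binom{|S|}{k}(k/|S|)^k\widehat f(S)^2\ge\Wop^{\geq k}(f)$.

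\textbf{Step 2.} The real failure is the split at $\tau$. Write $E=\||\grad^k f|\|_{\Leb^2}^2$, $T=\sum_{|J|=k}\|\Dop_J f\|_{\Leb^q}^2$, $\lambda=\ln(E/T)$ and $c=\frac{q}{2-q}$. You use no hypercontractive gain below $\tau$ and only the single factor $\upe^{-\lambda\vartheta(\tau)}$, with $\vartheta(\tau)=c\tanh\tau\le c\tau$, above it. This gives at best $\Wop^{\geq k}(f)/E\le\frac{(2k\tau)^k}{k!}+\upe^{-\lambda\vartheta(\tau)}$. You also dropped the factor $\upe^{-2kt}$, which is what produces the leading $1$ in the bracket. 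Making the second term $O(\lambda^{-k})$ forces $\tau\gtrsim\frac{k\ln\lambda}{c\lambda}$; your choice $\tau\sim\frac{q}{2k(2-q)}\ln(\Wop^{\geq k}/T)$ has the scale inverted. The best this split can then prove is $E\gtrsim\Wop^{\geq k}(f)\,(c\lambda/(k\ln\lambda))^k$ for large $\lambda$. That is a loss of a $\ln\lambda$ factor in each of the $k$ powers, not of a constant. So the ``exact-constant bookkeeping'' you defer is in fact the whole proof.

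\textbf{Step 3.} There is a second gap here. \eqref{eq:high-order-Falik-Samorodnitsky-Boolean} bounds the $\Leb^2$ energy, and this can exceed the $\Leb^p$ energy by a dimensional factor: for $f=\chi_{[n]}$ they are $\binom{n}{k}$ and $\binom{n}{k}^{p/2}$. Combining it with Theorem~\ref{thm:high-order-Tal-isoper-Boolean} therefore says nothing about $\||\grad^k f|\|_{\Leb^p}^p$.

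\textbf{The missing idea.} You need to apply hypercontractivity at the level of the functional rather than pointwise in time, and then compare first-order decrements. The paper uses $\Vop^k(f)=\big(\int_0^\infty 2\||\grad^k\Pop_t f|\|_{\Leb^2}^{2/k}\dif t\big)^k$.

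First, apply $\|\Pop_\epsilon h\|_{\Leb^2}\le\|h\|_{\Leb^q}^{\vartheta}\|h\|_{\Leb^2}^{1-\vartheta}$ to $h=\Dop_J\Pop_s f$ for every $s$. Then use H\"older over $J$ and over $s$, together with $\sum_J\|\Dop_J\Pop_s f\|_{\Leb^q}^2\le\upe^{-2ks}T$. This gives $\Vop^k(\Pop_\epsilon f)\le\upe^{-2k\epsilon}\Vop^k(f)^{1-\vartheta}T^{\vartheta}$ with $\vartheta=\frac{q}{2-q}\tanh\epsilon$.

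Second, by definition $\Vop^k(f)^{1/k}-\Vop^k(\Pop_\epsilon f)^{1/k}=\int_0^\epsilon 2\||\grad^k\Pop_s f|\|_{\Leb^2}^{2/k}\dif s$. Letting $\epsilon\to0$ gives $E^{1/k}\ge\Vop^k(f)^{1/k}[1+\frac{q}{2k(2-q)}\ln^+(\Vop^k(f)/T)]$. Together with $\Vop^k(f)\ge\Wop^{\geq k}(f)$, this is \eqref{eq:high-order-Falik-Samorodnitsky-Boolean} with constant exactly $1$.

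Your corrected linear $\Psi$ also works this way. It satisfies the same decay estimate, and your Jensen idea, applied to the measure $\binom{|S|}{k}\upe^{-2u|S|}\widehat f(S)^2$, gives $-\frac{\dif}{\dif u}\Psi(\Pop_u f)^{1/k}\le 2\||\grad^k\Pop_u f|\|_{\Leb^2}^{2/k}$. But it only works when paired with this derivative comparison, not with a split.

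\textbf{The case $p<2$.} Keep $\epsilon$ finite, bound the decrement using Lemma~\ref{lem:energy-bound-Boolean}, and optimize. This yields $\||\grad^k f|\|_{\Leb^p}^p\ge M(p)^k\Wop^{\geq k}(f)[1+\frac{q}{2k(2-q)}\ln^+(\Wop^{\geq k}(f)/T)]^{kp/2}$. It is this $\Leb^p$ bound that is added to Theorem~\ref{thm:high-order-Tal-isoper-Boolean}, with weights $1$ and $\frac{q}{2-q}$. Writing $W=\Wop^{\geq k}(f)\le 1$, the step uses $\ln^+(W/T)+\ln(1/W)\ge\ln^+(1/T)$. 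The factor $\frac{2}{2-q}=1+\frac{q}{2-q}$ in $C(p,q)$ comes from summing the two $M(p)^{-2/p}$ terms, not from a case distinction.
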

\begin{remark}
Here \eqref{eq:high-order-Eldan-Gross-Boolean} provides a high-order extension of the Eldan--Gross inequality \eqref{eq:Eldan-Gross} to all bounded real-valued functions for $p\in [1,2)$. When $p=2$, the Eldan--Gross inequality does not hold for all real-valued functions (similar to the isoperimetric case in Theorem~\ref{thm:high-order-Tal-isoper-Boolean}); instead, we have the high-order Falik--Samorodnitsky inequality \eqref{eq:high-order-Falik-Samorodnitsky-Boolean}, which recovers (and in fact strengthens, by an additive constant $1$) the classical one when $k=1$ and $q=1$.
\end{remark}

In the Gaussian setting, we similarly have:
\begin{theorem}\label{thm:high-order-Eldan-Gross-Gaussian}
For all $k\in \Nbb$, $f\in \Sob^k(\Rbb^n,\gamma_n)$ and $q\in [1,2)$,
\begin{equation}\label{eq:high-order-Falik-Samorodnitsky-Gaussian}
    \left\|\left|\grad^k f\right|\right\|_{\Leb^2}^{2} \geq \Wop^{\geq k}(f) \left[1+ \frac{q}{2k(2-q)}\ln^+\left(\frac{\Wop^{\geq k}(f)}{\sum_{\kappa:|\kappa|=k} \left\|\Dop^\kappa f\right\|_{\Leb^q}^2} \right)\right]^k,
\end{equation}
and further for $\left\|f\right\|_{\Leb^\infty}\leq 1$ and $q\leq p<2$,
\begin{equation}\label{eq:high-order-Eldan-Gross-Gaussian}
    \left\|\left|\grad^k f\right|\right\|_{\Leb^p}^{p} \geq C^{\text{\rm\ref{thm:high-order-Eldan-Gross-Gaussian}}}(p,q)^k\Wop^{\geq k}(f) \left[1 +  \frac{q}{2k(2-q)}\ln^+\left(\frac{1}{\sum_{\kappa:|\kappa|=k} \left\|\Dop^\kappa f\right\|_{\Leb^q}^2} \right)\right]^\frac{kp}{2},
\end{equation}
with $C^{\text{\rm\ref{thm:high-order-Eldan-Gross-Gaussian}}}(p,q) = C^{\text{\rm\ref{thm:high-order-Eldan-Gross-Boolean}}}(p,q) \gtrsim \left(2-p\right)\left(2-q\right)$.
\end{theorem}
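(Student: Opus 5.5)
The proof runs the variance-decay framework along the Ornstein--Uhlenbeck semigroup $\Pop_t=\upe^{-tL}$, with $L=\sum_i(-\Del_i^2+x_i\Del_i)$. Since $\Dop^\kappa$ (normalized) satisfies $\Dop^\kappa\Pop_t=\upe^{-kt}\Pop_t\Dop^\kappa$ for $|\kappa|=k$, it intertwines the semigroup exactly as $\Dop_J$ does in the Boolean case, and Hermite polynomials take the role of Walsh characters.

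\textbf{Step 1: Besov-type variance functional and decay identity.} Set
\[
\mathcal V_k(f):=\int_0^\infty \frac{t^{k-1}}{(k-1)!}\,\big\|\,|\grad^k \Pop_t f|\,\big\|_{\Leb^2}^2\,\dif t .
\]
With Hermite expansion $f=\sum_\alpha \hat f(\alpha)H_\alpha$ (normalized $H_\alpha$), we have $\sum_{|\kappa|=k}\|\Dop^\kappa H_\alpha\|^2=\binom{|\alpha|}{k}$. A direct computation then gives
\[
\||\grad^k\Pop_t f|\|_2^2=\sum_\alpha\binom{|\alpha|}{k}\upe^{-2t|\alpha|}\hat f(\alpha)^2 ,
\]
so that $\mathcal V_k(f)=\sum_\alpha \binom{|\alpha|}{k}(2|\alpha|)^{-k}\hat f(\alpha)^2$. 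Normalize it as $\widetilde{\mathcal V}_k(f)=2^k\mathcal V_k(f)$ together with a suitable combinatorial factor. It is then comparable to $\Wop^{\ge k}(f)$ from below and to $\||\grad^k f|\|_2^2$ from above, level by level. These are exactly the Boolean identities with $|S|$ replaced by $|\alpha|$, so every spectral computation from the Boolean proof of Theorem~\ref{thm:high-order-Eldan-Gross-Boolean} transfers verbatim.

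\textbf{Step 2: Falik--Samorodnitsky bound \eqref{eq:high-order-Falik-Samorodnitsky-Gaussian}.} Split $\int_0^\infty$ at a time $T$. For $t\ge T$, bound the tail by the spectral weight $\upe^{-2kT}$ times $\Wop^{\ge k}$-type quantities. For $t\le T$, write $\||\grad^k\Pop_t f|\|_2^2=\upe^{-2kt}\sum_\kappa\|\Pop_t\Dop^\kappa f\|_2^2$ and apply Gaussian hypercontractivity $\|\Pop_t g\|_2\le\|g\|_q$ for $\upe^{-2t}\le q-1$. Because $q<2$, I would use Riesz--Thorin/log-convexity interpolation between $t=0$ ($\|\cdot\|_2$) and the hypercontractive time. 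This gives $\|\Pop_t g\|_2^2\le \|g\|_2^{2(1-\theta)}\|g\|_q^{2\theta}$ with $\theta\approx \frac{q}{2-q}\,t$ near $0$, and the factor $\frac{q}{2(2-q)}$ enters exactly here. Summing over $\kappa$ with Hölder leads to
\[
\mathcal V_k \le \text{(short-time part, with the ratio } \textstyle\sum\|\Dop^\kappa f\|_q^2/\||\grad^k f|\|_2^2)+\text{tail}.
\]
Optimizing $T$ and comparing with $\Wop^{\ge k}$ yields the bracket $[1+\frac{q}{2k(2-q)}\ln^+(\cdot)]^k$, where the $k$-th power comes from the weight $t^{k-1}$. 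The argument is identical to the Boolean case, since only the intertwining relation, hypercontractivity, and the spectral identity are used.

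\textbf{Step 3: Eldan--Gross bound \eqref{eq:high-order-Eldan-Gross-Gaussian}.} Here I convert the $\Leb^2$ energy into the $\Leb^p$ energy using the Bismut-type estimate $\||\grad^k\Pop_t f|\|_\infty\lesssim t^{-k/2}$ for $\|f\|_\infty\le1$. The Gaussian version comes from the Mehler formula: $\grad^k\Pop_t f(x)=(\upe^{-t}/\sqrt{1-\upe^{-2t}})^k\,\Eop[f(\cdot)\,\mathbf H_k(Z)]$, with $\mathbf H_k$ the normalized Hermite tensor, whose $\Leb^2$ norm is $1$. This gives the bound $\||\grad^k \Pop_t f|\|_\infty\le (\upe^{2t}-1)^{-k/2}$, the same quantity $M(p)$ arises as in the Boolean case, and so the constants coincide. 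Then
\[
\||\grad^k\Pop_t f|\|_2^2\le \||\grad^k\Pop_tf|\|_\infty^{2-p}\,\||\grad^k \Pop_t f|\|_p^p ,
\]
and contractivity gives $\||\grad^k\Pop_tf|\|_p\le \upe^{-kt}\||\grad^kf|\|_p$. Inserting these into $\mathcal V_k$ for $t\le T$, and using the Step-2 short-time estimate in the regime where the logarithm is large (with $\Wop^{\ge k}\le 1$), I choose $T\asymp \frac{q}{2-q}\cdot\frac{1}{k}\ln^+(1/\sum\|\Dop^\kappa f\|_q^2)$. The factor $\int_0^T t^{k-1}t^{-k(2-p)/2}\,\dif t\sim T^{kp/2}$ gives the power $kp/2$, and the $(2-p)^{-1}$ from this integral together with the $(2-q)^{-1}$ produce $C(p,q)$.

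\textbf{Main obstacle.} The delicate step is the sharp Gaussian Bismut estimate with constant exactly matching the Boolean one, including the control of $\sup_x$ of the Hermite-tensor average, and keeping $k$-uniform constants when optimizing $T$. The approximation of $f\in\Sob^k$ by smooth bounded functions is routine.
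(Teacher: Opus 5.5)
\textbf{Missing idea in Step~3.} Your Step~3 lacks the mechanism that produces $\ln^+(1/\Sigma)$, where $\Sigma:=\sum_{|\kappa|=k}\|\Dop^\kappa f\|_{\Leb^q}^2$. A hypercontractive decay estimate for any variance-type functional compares that functional with $\Sigma$. So the most such a scheme gives is a bracket in $\ln^+(\Wop^{\geq k}(f)/\Sigma)$. The hypothesis $\Wop^{\geq k}(f)\leq 1$ only gives $\ln(\Wop^{\geq k}(f)/\Sigma)\leq \ln(1/\Sigma)$, which is the wrong direction for \eqref{eq:high-order-Eldan-Gross-Gaussian}.

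What you need is a second, independent lower bound that carries $\ln(1/\Wop^{\geq k}(f))$. This is the isoperimetric inequality \eqref{eq:high-order-iso-Gaussian}. The paper obtains it from the master inequality with $q=p$, so that the energy $\||\grad^k f|\|_{\Leb^p}^p$ controls its own logarithm, and then absorbs that logarithm via $\upe^{-1}x\geq \ln^+ x$.

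Now write this and Lemma~\ref{lem:high-order-energy-var-Gaussian} (combined with $\Vop^k\geq \Wop^{\geq k}$) as lower bounds on $(\||\grad^k f|\|_{\Leb^p}^p/\Wop^{\geq k}(f))^{2/(kp)}$. Adding them with weights $1$ and $\frac{q}{2-q}$ turns $\ln^+(\Wop^{\geq k}/\Sigma)+\ln(1/\Wop^{\geq k})$ into $\ln^+(1/\Sigma)$. This step is also the source of the factor $(2-p)^{-1}$ in $C(p,q)$; it does not come from $\int_0^T t^{kp/2-1}\dif t=\frac{2}{kp}T^{kp/2}$. Your time scale is also inverted. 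A short-time bound $\lesssim T^{kp/2}\||\grad^k f|\|_{\Leb^p}^p$ gives a logarithmic gain only for $T\asymp 1/\ln^+(\cdot)$, not for $T\asymp \ln^+(\cdot)$.

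\textbf{Constants in Steps~2--3.} As described, Steps~2--3 cannot deliver the stated constants: leading constant $1$ and coefficient $\frac{q}{2k(2-q)}$ in \eqref{eq:high-order-Falik-Samorodnitsky-Gaussian}, and $M(p)$ inside $C(p,q)$. The hypercontractive exponent is $\vartheta(t)=\min\{\frac{q}{2-q}\tanh t,1\}<\frac{q}{2-q}t$. Set $G:=\||\grad^k f|\|_{\Leb^2}^2$ and $\rho:=\Sigma/G<1$. Then even the unsplit integral satisfies $\int_0^\infty \frac{t^{k-1}}{(k-1)!}\upe^{-2kt}\rho^{\vartheta(t)}\dif t>(2k+\frac{q}{2-q}\ln\frac{1}{\rho})^{-k}$, whereas your route needs the reverse inequality to reach the stated bracket. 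Splitting only adds loss: a spectral bound on the tail $[T,\infty)$ carries no hypercontractive gain. Moreover, for $k\geq 2$ that tail strictly exceeds $\mathcal V_k(\Pop_T f)$, so a decay estimate for $\mathcal V_k(\Pop_T f)$ does not control it.

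The paper's functional $\Vop^k(f)=(\int_0^\infty 2\||\grad^k\Pop_t f|\|_{\Leb^2}^{2/k}\dif t)^k$ is built to avoid exactly this:
\begin{itemize}
\item $\Vop^k(f)^{1/k}-\Vop^k(\Pop_\epsilon f)^{1/k}=\int_0^\epsilon 2\||\grad^k\Pop_s f|\|_{\Leb^2}^{2/k}\dif s$ is localized to $[0,\epsilon]$.
\item Hypercontractivity and H\"older in time give $\Vop^k(\Pop_\epsilon f)\leq \upe^{-2k\Rscr\tanh\epsilon}\Vop^k(f)$.
\item For $p=2$, letting $\epsilon\to 0$ (where $\tanh\epsilon\sim\epsilon$) gives \eqref{eq:high-order-Falik-Samorodnitsky-Gaussian} exactly.
\item For $p<2$, the exponent $2/k$ turns the Bismut singularity into the integrable $s^{-(1-p/2)}$ and yields the power $kp/2$ with $M(p)$.
\end{itemize}
Reaching $M(p)$ also requires the sharper interpolation of Lemma~\ref{lem:energy-bound-Gaussian}. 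It uses $\|\Dop^\kappa\Pop_t f\|_{\Leb^2}^2=\langle \Dop^\kappa f,\Dop^\kappa\Pop_{2t} f\rangle_{\Leb^2}$ and gives $(\upe^{4t}-1)^{-k(1-p/2)}$ rather than your $(\upe^{2t}-1)^{-k(1-p/2)}$.

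For $p=2$ your functional can be salvaged by an infinitesimal argument. Let $\phi(s):=\mathcal V_k(\Pop_s f)$. Then $\phi(s)\leq \upe^{-2ks}\phi(0)(\Sigma/\widetilde{\mathcal V}_k)^{\vartheta(s)}$ with $\widetilde{\mathcal V}_k=(2k)^k\phi(0)\geq \Wop^{\geq k}(f)$. Since the $(-1)^j\phi^{(j)}(0)$ are moments of a positive measure, Lyapunov's inequality gives $G=(-1)^k\phi^{(k)}(0)\geq \phi(0)(-\phi'(0)/\phi(0))^k$, which yields \eqref{eq:high-order-Falik-Samorodnitsky-Gaussian}. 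For $p<2$, however, the Bismut singularity forces a finite time window, and then the non-localized tail is precisely what the $\Leb^{2/k}$ structure of $\Vop^k$ is designed to avoid.

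Finally, the Gaussian Bismut bound you single out as the main obstacle is immediate from Bessel's inequality for the local Hermite system, as in Proposition~\ref{prop:local-derivative-identity-Gaussian}. Note that the Hermite tensor has $\Leb^2$ norm $\binom{n+k-1}{k}^{1/2}$, not $1$, so Cauchy--Schwarz alone would not give a dimension-free bound.
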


\subsection{Proof outline: the high-order variance-decay framework}\label{subsec:outline}

To illuminate the core analytical mechanism, we outline here the high-order variance-decay framework used to establish our main results in Section~\ref{subsec:results}. For clarity of exposition, we restrict our focus to the case $p=1$ and a bounded function $f: \left\|f\right\|_{\Leb^\infty} \lesssim 1$.

To appreciate the high-order obstruction, let us first recall the first-order variance-decay framework established in \cite{ChangLi2026}. When $k=1$, we have the Bismut-type gradient estimate and a subsequent interpolation bound under the smoothing of the associated ergodic symmetric Markov semigroup $\Pop_t$:
\begin{equation*}
    \left\|\left|\grad \Pop_t f\right|\right\|_{\Leb^\infty} \lesssim t^{-1/2} \implies \left\|\left|\grad \Pop_t f\right|\right\|_{\Leb^2}^2 \lesssim t^{-1/2} \left\|\left|\grad f\right|\right\|_{\Leb^1}.
\end{equation*}
Combined with the variance representation formula:
\begin{equation}\label{eq:semigroup-var-representation}
    \Var(f) = \int_{0}^{\infty} 2 \left\|\left|\grad \Pop_s f\right|\right\|_{\Leb^2}^2 \dif s,
\end{equation}
every sharp short-time variance-decay estimate with rate $\Rscr\gtrsim 1$ and time horizon $\epsilon>0$ on the given function $f$ along the semigroup $\Pop_t$,
\begin{equation}\label{eq:var-decay-abstract}
    \Var(\Pop_t f) \leq \upe^{-2t \Rscr} \Var(f),\quad \forall\, t\in [0,\epsilon],
\end{equation}
naturally yields
\begin{align*}
    \left(1-\upe^{-2 t \Rscr}\right) \Var(f) &\leq \Var(f)-\Var(\Pop_t f) \\
    &= \int_{0}^{t} 2 \left\|\left|\grad \Pop_s f\right|\right\|_{\Leb^2}^2 \dif s \lesssim t^{1/2} \left\|\left|\grad f\right|\right\|_{\Leb^1},
\end{align*}
and hence, by optimizing over $t \in [0, \epsilon]$, leads to the following Talagrand-type inequality corresponding to the rate $\Rscr$:
\begin{equation*}
    \left\|\left|\grad f\right|\right\|_{\Leb^1} \gtrsim_\epsilon \Var(f)\, \Rscr^{1/2}.
\end{equation*}
The task of proving Talagrand-type inequalities is thus reduced to establishing the variance-decay estimates \eqref{eq:var-decay-abstract} via hypercontractivity.

When attempting to lift this elegant framework to higher orders $k > 1$, a fundamental analytical obstruction emerges. The corresponding gradient estimates scale with the order (see Section~\ref{subsec:derivative-estimate} for more details):
\begin{equation*}
    \left\|\left|\grad^k \Pop_t f\right|\right\|_{\Leb^\infty} \lesssim_k t^{-k/2} \implies \left\|\left|\grad^k \Pop_t f\right|\right\|_{\Leb^2}^2 \lesssim_k t^{-k/2} \left\|\left|\grad^k f\right|\right\|_{\Leb^1}.
\end{equation*}
If one attempts a naive extension of the variance representation \eqref{eq:semigroup-var-representation} by directly integrating the high-order energy $\left\|\left|\grad^k \Pop_s f\right|\right\|_{\Leb^2}^2$, the previous short-time argument collapses, as $t^{-k/2}$ is \emph{not integrable} near the origin $t=0$ for $k > 1$.

Our key insight is that this non-integrability can be structurally fixed by rebalancing the integration powers in the fashion of Besov norms. To this end, we introduce the \textit{high-order Besov-type variance functional}
\begin{equation*}
    \Vop^k(f) := \left(\int_0^\infty 2 \left\|\left|\grad^k \Pop_s f\right|\right\|_{\Leb^2}^{2/k} \dif s\right)^{k}\asymp_k \left\|f^{\geq k}\right\|_{\Besov_{2,2/k}^0}^2,
\end{equation*}
which behaves as the squared equivalent Besov $\Besov_{2,2/k}^0$ quasi-seminorm and recovers the canonical variance when $k=1$. Now, the short-time singularity is precisely regularized: assuming a similar decay estimate of the high-order variance functional,
\begin{equation}\label{eq:high-order-var-decay-abstract}
    \Vop^k(\Pop_t f) \leq \upe^{-2t \Rscr} \Vop^k(f),\quad \forall\, t\in [0,\epsilon],
\end{equation}
we then obtain
\begin{align*}
    \left(1-\upe^{-2 t \Rscr/k }\right) \Vop^k(f)^{1/k} &\leq \Vop^k(f)^{1/k} - \Vop^k(\Pop_t f)^{1/k} \\
    &= \int_{0}^{t} 2 \left\|\left|\grad^k \Pop_s f\right|\right\|_{\Leb^2}^{2/k} \dif s \lesssim_k t^{1/2} \left\|\left|\grad^k f\right|\right\|_{\Leb^1}^{1/k},
\end{align*}
which leads to a corresponding high-order Talagrand-type inequality:
\begin{equation*}
    \left\|\left|\grad^k f\right|\right\|_{\Leb^1} \gtrsim_{\epsilon,k} \Vop^k(f)\left(\Rscr/k\right)^{k/2}.
\end{equation*}
Crucially, this Besov-type rebalancing of the integration power also preserves our ability of establishing the high-order variance-decay estimates \eqref{eq:high-order-var-decay-abstract} via hypercontractivity (see Section~\ref{subsec:high-order-var-decay} for more details).

Finally, we connect these functional bounds to the underlying Fourier spectrum. Indeed, since $\Besov_{2,2/k}^0\subseteq \Leb^2$ for $k\geq 1$, the Besov-type nature of $\Vop^k(f)$ yields
\begin{equation*}
    \Vop^k(f) \asymp_k \left\|f^{\geq k}\right\|_{\dot{\Besov}_{2,2/k}^0}^2 \gtrsim_k \left\|f^{\geq k}\right\|_{\Leb^2}^2 = \Wop^{\geq k}(f).
\end{equation*}
More fundamentally, this connection is realized via an exact, sharp lower bound: $\Vop^k(f) \geq \Wop^{\geq k}(f)$ (see Section~\ref{subsec:high-order-variance-compare}). This inequality directly anchors our high-order variance-decay framework to the high-order Fourier weights, immediately giving rise to the comprehensive catalog of high-order results detailed in Section~\ref{subsec:results}.

\section{Preliminaries}\label{sec:preliminaries}

In this section, we collect the key analytic tools in both the Boolean and Gaussian settings. For a comprehensive introduction we refer to \cite{Ryan2014book}.

\subsection{Fourier analysis} \label{subsec:Fourier}

We begin by reviewing the classical Fourier frameworks, which provide convenient spectral representations that simplify the analysis in both the Boolean and Gaussian settings.

In the Boolean setting, an orthonormal basis of $\Leb^{2}(\cube^n,\sigma_n)$ is given by the \textit{Walsh functions} $\chi_S(x) := \prod_{i\in S} x_i$, $S\subseteq [n]$. Thus, every function $f:\cube^n\to \Rbb$ admits a unique Fourier--Walsh expansion
\begin{equation*}
    f = \sum_{S\subseteq[n]} \widehat{f}(S) \chi_S,\quad \widehat{f}(S) := \langle f,\chi_S \rangle_{\Leb^2}.
\end{equation*}
The discrete derivatives act as formal differentiations on these Fourier--Walsh expansions:
\begin{equation}\label{eq:derivative-Fourier-Boolean}
    \Dop_J f = \sum_{S\subseteq[n]:S\supseteq J} \widehat{f}(S) \chi_{S\setminus J}.
\end{equation}
In particular, for $k\in \Nbb_{\leq n}$, the null space of the $k$-th gradient tensor is given by
\begin{equation*}
    \Nul(\grad^k)=\opspan\left\{\chi_S: |S|<k\right\}.
\end{equation*}
Consequently, the high-order components and weights defined in Section~\ref{subsec:settings} satisfy
\begin{equation*}
    f^{\geq k} = \sum_{S\subseteq[n]: |S|\geq k} \widehat{f}(S) \chi_S, \quad \Wop^{\geq k}(f) = \sum_{S\subseteq[n]: |S|\geq k} \widehat{f}(S)^2,
\end{equation*}
which coincide with the usual definitions of high-order Fourier components and weights. We also use the notation $\Wop^{k}(f) := \sum_{S: |S|= k} \widehat{f}(S)^2$.

In the Gaussian setting, an orthonormal basis of $\Leb^{2}(\Rbb^n,\gamma_n)$ is given by the \textit{normalized multivariate Hermite polynomials} $h_{\alpha}:=\left(-1\right)^{|\alpha|} \varphi_n^{-1} \Dop^{\alpha} \varphi_n$, $\alpha\in \Nbb^n$. Analogously, every function $f\in \Leb^{2}(\Rbb^n,\gamma_n)$ admits a unique Fourier--Hermite expansion
\begin{equation*}
    f = \sum_{\alpha\in \Nbb^n} \widehat{f}(\alpha) h_\alpha,\quad \widehat{f}(\alpha) := \langle f,h_\alpha \rangle_{\Leb^2}.
\end{equation*}
The normalized high-order derivatives then transform according to the relation
\begin{equation}\label{eq:derivative-Fourier-Gaussian}
    \Dop^{\kappa} f = \sum_{\alpha\in \Nbb^n:\alpha\geq \kappa} \sqrt{\binom{\alpha}{\kappa}}\, \widehat{f}(\alpha) h_{\alpha-\kappa}.
\end{equation}
where $\alpha\geq \kappa$ denotes componentwise inequality ($\alpha_i\geq \kappa_i$ for all $i\in [n]$), $\alpha-\kappa$ is the entrywise difference multi-index, and $\binom{\alpha}{\kappa}:= \prod_{i\in [n]} \binom{\alpha_i}{\kappa_i}$. For $k\in \Nbb$, the null space of the normalized $k$-th gradient tensor is similarly given by
\begin{equation*}
    \Nul(\grad^k)=\opspan\left\{h_{\alpha}: |\alpha|<k\right\},
\end{equation*}
yielding the spectral characterizations
\begin{equation*}
    f^{\geq k} = \sum_{\alpha\in \Nbb^n:|\alpha|\geq k} \widehat{f}(\alpha) h_\alpha,\quad \Wop^{\geq k}(f) = \sum_{\alpha\in \Nbb^n: |\alpha|\geq k} \widehat{f}(\alpha)^2,
\end{equation*}
matching the structural Fourier components and weights of order at least $k$. We similarly write $\Wop^{k}(f) := \sum_{\alpha: |\alpha|= k} \widehat{f}(S)^2$.

\subsection{Semigroup analysis}

Semigroups play a central role in modern analysis, particularly in the study of functional inequalities, diffusion processes, and harmonic analysis. Here, the \textit{noise semigroup} in the Boolean setting and the \textit{Ornstein--Uhlenbeck (OU) semigroup} in the Gaussian setting, both denoted by $(\Pop_t)_{t\geq 0}$, are prototypical examples of \textit{ergodic symmetric Markov semigroups}. Moreover, they are compatible with the spatial derivatives, and enjoy the fundamental property of hypercontractivity. Together, these semigroups provide a powerful framework for smoothing functions and deriving sharp estimates.

\subsubsection*{Noise semigroup in the Boolean setting}
For $t\geq 0$ and $x\in \cube^n$, we consider a random vector $\ybold\sim p_{t,x}$ generated by flipping each coordinate of $x$ independently with probability $\frac{1-\upe^{-t}}{2}$. Its distribution then yields the transition probability of the noise semigroup:
\begin{equation*}
    p_t(x,y) := \Prob_{\ybold\sim p_{t,x}}\left(\ybold = y\right) = \prod_{i\in [n]} \frac{1+\upe^{-t}x_i y_i}{2}.
\end{equation*}
The noise operator $\Pop_t$ acting on functions $f:\cube^n\to \Rbb$ is then defined by
\begin{equation}\label{eq:def-semigroup-Boolean}
    \Pop_t f (x) := \Exp_{\ybold\sim p_{t,x}}\left[f(\ybold)\right] = \sum_{y\in \cube^n} f(y) \, p_t(x,y),\quad x\in \cube^n.
\end{equation}
The operators $(\Pop_t)_{t\geq 0}$ form an ergodic symmetric Markov semigroup that is diagonalized by the Walsh basis $\{\chi_S\}_{S \subseteq [n]}$, yielding the Fourier--Walsh expansion:
\begin{equation}\label{eq:semigroup-Fourier-Boolean}
    \Pop_t f = \sum_{S\subseteq[n]} \upe^{-t|S|}\widehat{f}(S) \chi_S.
\end{equation}

The noise semigroup commutes with the discrete derivatives as follows:
\begin{lemma}
\label{lem:derivative-semigroup-decay-Boolean}
For all $J\subseteq [n]$, $f:\cube^n\to \Rbb$ and $t\geq 0$,
\begin{equation}\label{eq:J-derivative-semigroup-decay-Boolean}
    \Dop_J \Pop_t f = \upe^{-t|J|} \Pop_t \Dop_J f,
\end{equation}
and consequently, for all $k\in \Nbb_{\leq n}$,
\begin{equation}\label{eq:k-derivative-semigroup-decay-Boolean}
    \left|\grad^k \Pop_t f\right| \leq \upe^{-kt}\Pop_t \left|\grad^k f\right|.
\end{equation}
\end{lemma}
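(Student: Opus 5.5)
The identity \eqref{eq:J-derivative-semigroup-decay-Boolean} is a direct Fourier computation; I will check it on each Walsh function and extend by linearity. Since both sides are linear in $f$, it suffices to take $f=\chi_S$.

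By \eqref{eq:semigroup-Fourier-Boolean} we have $\Pop_t\chi_S=\upe^{-t|S|}\chi_S$. By \eqref{eq:derivative-Fourier-Boolean}, $\Dop_J\chi_S=\chi_{S\setminus J}$ if $S\supseteq J$, and $\Dop_J\chi_S=0$ otherwise. If $S\not\supseteq J$, both sides vanish. If $S\supseteq J$, then
\begin{equation*}
    \Dop_J\Pop_t\chi_S=\upe^{-t|S|}\chi_{S\setminus J}, \qquad \Pop_t\Dop_J\chi_S=\upe^{-t(|S|-|J|)}\chi_{S\setminus J},
\end{equation*}
so the two sides differ exactly by the factor $\upe^{-t|J|}$. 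This gives \eqref{eq:J-derivative-semigroup-decay-Boolean}.

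For \eqref{eq:k-derivative-semigroup-decay-Boolean}, fix $x$ and apply the identity to each $J$ with $|J|=k$. This gives $|\grad^k\Pop_t f(x)|=\upe^{-kt}\,|(\Pop_t\Dop_J f(x))_J|$, so it remains to bound the Euclidean norm of the vector $(\Pop_t\Dop_J f(x))_J$. By \eqref{eq:def-semigroup-Boolean}, $\Pop_t$ is an expectation against the probability measure $p_t(x,\cdot)$, and it acts componentwise on the vector-valued function $F=(\Dop_J f)_J$. Jensen's inequality (equivalently, Minkowski's inequality, i.e.\ convexity of the Euclidean norm) then gives $|\Exp F(\ybold)|\le \Exp|F(\ybold)|$, that is,
\begin{equation*}
    \big|(\Pop_t\Dop_J f(x))_J\big| \le \Pop_t\big|\grad^k f\big|(x).
\end{equation*}
Combining the two displays yields \eqref{eq:k-derivative-semigroup-decay-Boolean}.

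There is no real obstacle here. The only point needing care is that $\Pop_t$ is positivity-preserving with $\Pop_t 1=1$, which holds because $p_t(x,\cdot)$ is a probability measure; this is what justifies the vector-valued Jensen step.
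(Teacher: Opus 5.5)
Your proposal is correct and follows the paper's proof: the commutation identity is verified by comparing Fourier--Walsh expansions (you do it on each $\chi_S$, which is equivalent), and the pointwise bound then follows from Jensen's inequality in $p_{t,x}$ applied to the Euclidean norm of the vector $(\Dop_J f)_{|J|=k}$.
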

\begin{remark}
For $p\in [1,\infty]$, combining with the $\Leb^p$ contraction property of the noise operator $\Pop_t$, we further obtain the energy-decay estimates:
\begin{equation}\label{eq:derivative-semigroup-decay-Boolean}
    \left\|\Dop_J \Pop_t f\right\|_{\Leb^p} \leq \upe^{-t|J|} \left\|\Dop_J f\right\|_{\Leb^p}, \quad \left\|\left|\grad^k \Pop_t f\right|\right\|_{\Leb^p} \leq \upe^{-kt} \left\|\left|\grad^k f\right|\right\|_{\Leb^p}.
\end{equation}
\end{remark}
\begin{proof}
Directly comparing the Fourier--Walsh expansions,
\begin{equation*}
    \Dop_J \Pop_t f = \sum_{S\subseteq[n]:S\supseteq J} \upe^{-t|S|}\widehat{f}(S) \chi_{S\setminus J},\quad \Pop_t \Dop_J f = \sum_{S\subseteq[n]:S\supseteq J} \upe^{-t|S\setminus J|}\widehat{f}(S) \chi_{S\setminus J},
\end{equation*}
and noting that $|S\setminus J| = |S|-|J|$ for $S\supseteq J$, we obtain \eqref{eq:J-derivative-semigroup-decay-Boolean}. Further applying Jensen's inequality (in $p_{t,x}$) to the convex function $(x_J)_{J:|J|=k}\mapsto\sqrt{\sum_{J:|J|=k}|x_J|^2}$ yields \eqref{eq:k-derivative-semigroup-decay-Boolean}.
\end{proof}

Moreover, the noise semigroup satisfies the hypercontractivity:
\begin{proposition}[Bonami--Beckner \cite{Bonami1970Fourier,Beckner1975inequalities}]\label{prop:hypercontractivity-Boolean}
For $f:\cube^n\to \Rbb$, $t\geq 0$ and $p \geq 1+\upe^{-2t}$,
\begin{equation*}
    \left\|\Pop_t f\right\|_{\Leb^2} \leq \left\|f\right\|_{\Leb^{p}}.
\end{equation*}
\end{proposition}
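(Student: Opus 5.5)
The plan is the classical route for Bonami--Beckner: first reduce to one dimension by tensorization, then settle the one-dimensional two-point inequality by hand.

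\emph{One dimension.} For $n=1$ write $f(x)=a+bx$. Then $\Pop_t f = a + \rho b x$ with $\rho=\upe^{-t}$, so $\|\Pop_t f\|_{\Leb^2}^2 = a^2+\rho^2 b^2$. We must show
\begin{equation*}
a^2+\rho^2b^2 \le \left(\frac{|a+b|^p+|a-b|^p}{2}\right)^{2/p}
\end{equation*}
whenever $p\ge 1+\rho^2$. The right-hand side is nondecreasing in $p$, so it suffices to take $p=1+\rho^2$, i.e.\ $\rho^2=p-1$ with $p\in[1,2]$. By homogeneity and symmetry we may assume $a=1$ and $|b|<1$; the case $|b|\ge 1$ follows by swapping the roles via $|a\pm b|$, or by continuity/convexity. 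The inequality then reads
\begin{equation*}
\frac{(1+b)^p+(1-b)^p}{2} \ge \bigl(1+(p-1)b^2\bigr)^{p/2}.
\end{equation*}
Expand the left side as $\sum_{j} \binom{p}{2j} b^{2j}$; for $p\in[1,2]$ all these coefficients are nonnegative. Expand the right side, via the binomial series in $(p-1)b^2$ with exponent $p/2\le1$, as $1+\frac{p(p-1)}{2}b^2+\cdots$. Comparing term by term, the $b^2$ coefficients agree, and one checks $\binom{p}{2j}\ge \binom{p/2}{j}(p-1)^j$ for $j\ge2$; the right-hand coefficients alternate or are small, so it is enough to show the left dominates. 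This coefficient comparison is the main technical step. If it proves messy, I would fall back to the standard route: first prove the $(2\to p')$ dual form $\|a+\rho b x\|_{\Leb^{q}}\le \|a+bx\|_{\Leb^2}$ with $q=1+\rho^{-2}$ via series, then invoke duality and self-adjointness of $\Pop_t$.

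\emph{Tensorization.} Induct on $n$. Since $\Pop_t=\bigotimes_i \Pop_t^{(i)}$, apply the one-dimensional bound in the last coordinate for each fixed $x'\in\cube^{n-1}$, giving
\begin{equation*}
\Eop_{x_n}|\Pop_t f|^2 \le \left(\Eop_{x_n}|\Pop_t^{(<n)} f|^p\right)^{2/p}.
\end{equation*}
Take $\Eop_{x'}$ and use Minkowski's integral inequality in $\Leb^{2/p}$ (valid since $2/p\ge1$) to swap the order of the norms. Finally apply the induction hypothesis to $x'\mapsto f(x',x_n)$ for each fixed $x_n$. The only obstacle in this step is the norm-swap, which is legitimate precisely because $p\le2$; the case $p>2$ follows trivially from the $p=2$ case by monotonicity of $\Leb^p$ norms on a probability space.
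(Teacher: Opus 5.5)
\textbf{Gap in the coefficient comparison.} The paper does not prove this proposition; it cites Bonami and Beckner. Your overall route (two-point inequality plus tensorization) is the classical one, and several parts of it are sound:
\begin{itemize}
\item The tensorization step is correct. Minkowski in $\Leb^{2/p}$ is legitimate because you first reduced to $p=1+\upe^{-2t}\le 2$.
\item The normalization to $a=1$, $|b|\le 1$ works. For $|b|>|a|$ you should say explicitly that $a^2+\rho^2b^2\le b^2+\rho^2a^2$, while $|a+b|^p+|a-b|^p$ is symmetric in $(a,b)$.
\end{itemize}
The genuine problem is the step you call the main technical one. The termwise inequality $\binom{p}{2j}\ge\binom{p/2}{j}(p-1)^j$ is false for odd $j\ge 3$ when $p$ is close to $2$.

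Take $j=3$. Cancel the positive factor $p(p-1)(p-2)(p-4)$ (positive for $p\in(1,2)$). The claim then becomes $(p-3)(p-5)\ge 15(p-1)^2$, i.e.\ $p\le 11/7$. For instance, at $p=1.9$ you get $\binom{1.9}{6}\approx 0.0017$ but $\binom{0.95}{3}(0.9)^3\approx 0.0061$. More generally, as $p\to 2$ the ratio of the left coefficient to the right coefficient tends to $1/(2j-1)$ for every odd $j\ge 3$. So ``the left dominates'' cannot be established coefficient by coefficient. Your fallback through the dual $(2\to q)$ form is only sketched, and a termwise series comparison fails in the same way there.

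\textbf{The repair.} It is much simpler than any higher-order comparison and uses only the fact you already observed, namely $\binom{p}{2j}\ge 0$ for all $j$ when $p\in[1,2]$.
\begin{itemize}
\item On the left, drop every term with $j\ge 2$: $\frac{(1+b)^p+(1-b)^p}{2}\ge 1+\frac{p(p-1)}{2}b^2$.
\item On the right, $s\mapsto (1+s)^{p/2}$ is concave since $p/2\le 1$, so it lies below its tangent line at $0$: $(1+(p-1)b^2)^{p/2}\le 1+\frac{p(p-1)}{2}b^2$.
\end{itemize}
These two bounds together give the two-point inequality. Equivalently, raise the lower bound to the power $2/p\ge 1$ and apply Bernoulli's inequality. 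With this replacement your proof is complete.
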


\subsubsection*{Ornstein--Uhlenbeck semigroup in the Gaussian setting}
The OU semigroup is defined via Mehler's formula \cite{Mehler1866}: for $t\geq 0$ and $f\in \Leb^{2}(\Rbb^n,\gamma_n)$,
\begin{equation*}
    \Pop_t f(x) := \Exp_{\zbold\sim \gamma_n} \left[f(\upe^{-t}x+\sqrt{1-\upe^{-2t}}\zbold)\right], \quad x\in \Rbb^n.
\end{equation*}
Analogous to the Boolean setting, $(\Pop_t)_{t\geq 0}$ forms an ergodic symmetric Markov semigroup that is diagonalized by the normalized multivariate Hermite polynomials $\{h_\alpha\}_{\alpha \in \Nbb^n}$:
\begin{equation}\label{eq:semigroup-Fourier-Gaussian}
    \Pop_t f = \sum_{\alpha\in \Nbb^n} \upe^{-t|\alpha|}\widehat{f}(\alpha) h_\alpha.
\end{equation}

The OU semigroup similarly commutes with the normalized high-order derivatives:

\begin{lemma}
\label{lem:derivative-semigroup-decay-Gaussian}
For all $k\in \Nbb$, $f\in \Sob^k(\Rbb^n,\gamma_n)$, $\kappa\in \Nbb^n$ with $|\kappa|=k$, and $t\geq 0$,
\begin{equation}\label{eq:kappa-derivative-semigroup-decay-Gaussian}
    \Dop^\kappa \Pop_t f = \upe^{-t|\kappa|} \Pop_t \Dop^\kappa f,
\end{equation}
and consequently, for all $k\in \Nbb$,
\begin{equation}\label{eq:k-derivative-semigroup-decay-Gaussian}
    \left|\grad^k \Pop_t f\right| \leq \upe^{-kt}\Pop_t \left|\grad^k f\right|.
\end{equation}
\end{lemma}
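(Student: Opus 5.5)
The plan mirrors the proof of Lemma~\ref{lem:derivative-semigroup-decay-Boolean}: first verify the commutation identity \eqref{eq:kappa-derivative-semigroup-decay-Gaussian} on the Fourier--Hermite side, then deduce the pointwise bound \eqref{eq:k-derivative-semigroup-decay-Gaussian} from Jensen's inequality applied to a probabilistic representation of $\Pop_t$.

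For the identity, take $f\in \Sob^k(\Rbb^n,\gamma_n)$ and $|\kappa|=k$. By \eqref{eq:derivative-Fourier-Gaussian} and \eqref{eq:semigroup-Fourier-Gaussian},
\begin{equation*}
    \Dop^\kappa \Pop_t f = \sum_{\alpha\geq \kappa} \sqrt{\tbinom{\alpha}{\kappa}}\, \upe^{-t|\alpha|}\widehat{f}(\alpha) h_{\alpha-\kappa},\qquad
    \Pop_t \Dop^\kappa f = \sum_{\alpha\geq \kappa} \upe^{-t|\alpha-\kappa|}\sqrt{\tbinom{\alpha}{\kappa}}\, \widehat{f}(\alpha) h_{\alpha-\kappa}.
\end{equation*}
Since $|\alpha-\kappa|=|\alpha|-|\kappa|$ for $\alpha\geq\kappa$, the two series agree up to the factor $\upe^{-t|\kappa|}$.

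The only point needing care is justifying \eqref{eq:derivative-Fourier-Gaussian} in $\Leb^2$ for weak derivatives. The plan is to use that $f\in\Sob^k$ makes $\Dop^\kappa f\in\Leb^2$. Its Hermite coefficients can be computed by duality: $\langle \Dop^\kappa f, h_\beta\rangle$ is obtained by integrating by parts against the Gaussian weight, using $\Del_i^* = -\Del_i + x_i$ and the raising relation for Hermite polynomials. This gives $\sqrt{\binom{\beta+\kappa}{\kappa}}\,\widehat f(\beta+\kappa)$. Hence both sides are genuine $\Leb^2$ functions with identical coefficients, so they are equal almost everywhere.

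Alternatively, one can bypass spectral issues by differentiating Mehler's formula directly. We have
\begin{equation*}
    \Del_i \Pop_t f(x)=\upe^{-t}\,\Exp\big[(\Del_i f)(\upe^{-t}x+\sqrt{1-\upe^{-2t}}\zbold)\big],
\end{equation*}
first for smooth $f$ and then by density in $\Sob^k$. Iterating this gives $\Del^\kappa\Pop_t = \upe^{-t|\kappa|}\Pop_t\Del^\kappa$, and the normalizing factor $\prod_i(\kappa_i!)^{-1/2}$ passes through unchanged. I would present this Mehler computation as the primary argument, since it makes the subsequent Jensen step immediate.

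For the pointwise bound, fix $x$. By the identity, the tensor satisfies $\grad^k\Pop_t f(x) = \upe^{-kt}\,\Exp_{\zbold}\big[\grad^k f(\upe^{-t}x+\sqrt{1-\upe^{-2t}}\zbold)\big]$ componentwise. The Euclidean norm $(y_\kappa)_{|\kappa|=k}\mapsto\sqrt{\sum_\kappa|y_\kappa|^2}$ is convex on the space of tensors, which has countably many components, all in $\ell^2$ a.e. because $|\grad^k f|\in\Leb^2$. Jensen's inequality then gives $|\grad^k\Pop_t f|(x)\leq \upe^{-kt}\Pop_t|\grad^k f|(x)$. The measurability and integrability needed here follow from $|\grad^k f|\in \Leb^2(\gamma_n)$, which implies $\Pop_t|\grad^k f|$ is finite almost everywhere.

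The main obstacle is this density step: extending the commutation from smooth, compactly supported or polynomial functions to all of $\Sob^k$. I would handle it by approximating $f$ by its truncated Hermite expansions. These converge in $\Sob^k$, and both $\Dop^\kappa\Pop_t$ and $\Pop_t\Dop^\kappa$ are continuous from $\Sob^k$ to $\Leb^2$, since $\Pop_t$ is an $\Leb^2$ contraction. So the identity passes to the limit.
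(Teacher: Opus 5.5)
Your proposal is correct and matches the paper's proof. The paper likewise compares the Fourier--Hermite expansions of $\Dop^\kappa\Pop_t f$ and $\Pop_t\Dop^\kappa f$ using $|\alpha-\kappa|=|\alpha|-|\kappa|$, and then gets the pointwise bound from Jensen's inequality for the (finite-dimensional) Euclidean norm under the Mehler kernel. Your Mehler-differentiation alternative and your duality/density justification of \eqref{eq:derivative-Fourier-Gaussian} for weak derivatives are valid extra rigor that the paper leaves implicit; in the limit passage, the cleanest way to identify $\Dop^\kappa\Pop_t f$ as the $\Leb^2$-limit of $\Dop^\kappa\Pop_t f_N$ is the closedness of the weak derivative.
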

\begin{remark}
For $p\in [1,\infty]$, we also have the energy-decay estimates:
\begin{equation}\label{eq:derivative-semigroup-decay-Gaussian}
    \left\|\Dop^\kappa \Pop_t f\right\|_{\Leb^p} \leq \upe^{-t|\kappa|} \left\|\Dop_J f\right\|_{\Leb^p}, \quad \left\|\left|\grad^k \Pop_t f\right|\right\|_{\Leb^p} \leq \upe^{-kt} \left\|\left|\grad^k f\right|\right\|_{\Leb^p}.
\end{equation}
\end{remark}
\begin{proof}
Comparing the Fourier--Hermite expansions,
\begin{equation*}
    \Dop^\kappa \Pop_t f = \sum_{\alpha\in \Nbb^n:\alpha\geq \kappa} \sqrt{\binom{\alpha}{\kappa}}\, \upe^{-t|\alpha|} \widehat{f}(\alpha) h_{\alpha-\kappa},
\end{equation*}
\begin{equation*}
    \Pop_t \Dop^\kappa f = \sum_{\alpha\in \Nbb^n:\alpha\geq \kappa} \upe^{-t|\alpha-\kappa|} \sqrt{\binom{\alpha}{\kappa}}\, \widehat{f}(\alpha) h_{\alpha-\kappa},
\end{equation*}
and noting that $|\alpha-\kappa| = |\alpha|-|\kappa|$ for $\alpha\geq \kappa$, we obtain \eqref{eq:kappa-derivative-semigroup-decay-Gaussian}. The second inequality \eqref{eq:k-derivative-semigroup-decay-Gaussian} then follows similarly from Jensen's inequality.
\end{proof}

The OU semigroup also satisfies an identical hypercontractivity:
\begin{proposition}[Nelson--Gross \cite{Nelson1966,Gross1975}]\label{prop:hypercontractivity-Gaussian}
For $f\in \Leb^{2}(\Rbb^n,\gamma_n)$, $t\geq 0$ and $p \geq 1+\upe^{-2t}$,
\begin{equation*}
    \left\|\Pop_t f\right\|_{\Leb^2} \leq \left\|f\right\|_{\Leb^{p}}.
\end{equation*}
\end{proposition}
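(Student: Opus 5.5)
I would derive the Gaussian statement from the Boolean one, Proposition~\ref{prop:hypercontractivity-Boolean}, by a central limit argument. This avoids redoing a log-Sobolev computation. I would first prove it for bounded continuous $f$ and then pass to general $f\in\Leb^2(\Rbb^n,\gamma_n)$ by density.

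\emph{Step 1 (Boolean approximants).} Fix a bounded continuous $f:\Rbb^n\to\Rbb$ and $m\geq 1$. Define $F_m:\cube^{nm}\to\Rbb$ by
\begin{equation*}
F_m(y):=f\Big(\tfrac{1}{\sqrt m}\sum_{j=1}^m y_{1,j},\ldots,\tfrac{1}{\sqrt m}\sum_{j=1}^m y_{n,j}\Big).
\end{equation*}
Proposition~\ref{prop:hypercontractivity-Boolean} applied on $\cube^{nm}$ gives $\|\Pop_t F_m\|_{\Leb^2}\leq\|F_m\|_{\Leb^p}$ for every $m$, whenever $p\geq 1+\upe^{-2t}$.

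\emph{Step 2 (passing to the limit).} By the multivariate CLT, the block sums of a uniform $y$ converge in law to $\gamma_n$. Since $|f|^p$ is bounded and continuous, $\|F_m\|_{\Leb^p}\to\|f\|_{\Leb^p}$.

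For the left side, symmetry and the semigroup property give
\begin{equation*}
\|\Pop_tF_m\|_{\Leb^2}^2=\langle F_m,\Pop_{2t}F_m\rangle_{\Leb^2}=\Exp\big[F_m(\ybold)F_m(\ybold')\big].
\end{equation*}
Here $\ybold$ is uniform and $\ybold'$ is obtained from $\ybold$ by flipping each coordinate independently with probability $\frac{1-\upe^{-2t}}{2}$. The coordinate pairs $(y_{i,j},y'_{i,j})$ are i.i.d., centered, with unit variances and correlation $\rho=\upe^{-2t}$. The multivariate CLT applied to the $2n$-dimensional normalized block sums therefore gives convergence in law to $(\xbold,\rho\xbold+\sqrt{1-\rho^2}\,\zbold)$, with $\xbold,\zbold\sim\gamma_n$ independent.

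Because $f\otimes f$ is bounded and continuous, Mehler's formula then yields
\begin{equation*}
\Exp[F_m(\ybold)F_m(\ybold')]\to\langle f,\Pop_{2t}f\rangle_{\Leb^2}=\|\Pop_tf\|_{\Leb^2}^2.
\end{equation*}
Letting $m\to\infty$ in Step 1 proves the claim for bounded continuous $f$.

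\emph{Step 3 (density).} Take $f\in\Leb^2(\Rbb^n,\gamma_n)$. If $p>2$ and $\|f\|_{\Leb^p}=\infty$ there is nothing to prove, so assume $f\in\Leb^{\max(p,2)}$. Bounded continuous functions are dense in $\Leb^{\max(p,2)}(\gamma_n)$, so choose $f_j\to f$ in that space. Then $f_j\to f$ both in $\Leb^2$ and in $\Leb^p$, because $\gamma_n$ is a probability measure. Since $\Pop_t$ is an $\Leb^2$ contraction, $\|\Pop_t f_j\|_{\Leb^2}\to\|\Pop_tf\|_{\Leb^2}$, and also $\|f_j\|_{\Leb^p}\to\|f\|_{\Leb^p}$. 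The inequality therefore passes to the limit.

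\emph{Main obstacle.} The step needing the most care is the joint CLT identification in Step 2. One must check that the Boolean correlated pair defined via $\Pop_{2t}$ matches exactly Mehler's Gaussian coupling at correlation $\upe^{-2t}$; a mismatch between $t$ and $2t$ would shift the hypercontractive threshold. The convergence itself is standard because test functions are bounded and continuous.

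If this route were unavailable, the fallback is Gross's argument. One would prove the one-dimensional Gaussian log-Sobolev inequality, tensorize it, and then differentiate $t\mapsto\|\Pop_tf\|_{\Leb^{q(t)}}$ along the curve $q(t)=1+\upe^{2t}(p-1)$.
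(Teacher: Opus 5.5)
Your proof is correct, but the paper does not prove this statement at all: it simply imports it from Nelson and Gross, just as it imports the Boolean case from Bonami--Beckner. You instead derive the Gaussian inequality from Proposition~\ref{prop:hypercontractivity-Boolean} via the multivariate CLT, and the key identifications are right:
\begin{itemize}
\item $\|\Pop_tF_m\|_{\Leb^2}^2=\langle F_m,\Pop_{2t}F_m\rangle_{\Leb^2}$;
\item flipping with probability $\frac{1-\upe^{-2t}}{2}$ gives per-coordinate correlation $\rho=\upe^{-2t}$;
\item the i.i.d.\ vectors $(y_{\cdot,j},y'_{\cdot,j})\in\Rbb^{2n}$ have covariance with diagonal blocks $\Ibf_n$ and off-diagonal blocks $\rho\Ibf_n$;
\item Mehler's formula at the time $s$ with $\upe^{-s}=\rho$ returns exactly $\langle f,\Pop_{2t}f\rangle_{\Leb^2}=\|\Pop_tf\|_{\Leb^2}^2$.
\end{itemize}
So the threshold $1+\upe^{-2t}$ is preserved.

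Compared with the cited classical proofs, yours is much shorter once Bonami--Beckner is granted, and it mirrors the paper's Boolean/Gaussian parallelism. It is, however, not independent of the Boolean theorem. Nelson's argument, or Gross's route, yields the full $\Leb^p\to\Leb^q$ family and the equivalence with the log-Sobolev inequality, none of which the paper needs. Gross's route also uses a CLT transfer, but of the two-point log-Sobolev inequality, and then differentiates $t\mapsto\|\Pop_tf\|_{\Leb^{q(t)}}$.

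One small tidy-up concerns Step 3. Bounded continuous functions are not dense in $\Leb^\infty$, so the case $p=\infty$ is not covered as written. Simply note at the outset that for $p\ge 2$ the claim is immediate from $\|\Pop_tf\|_{\Leb^2}\le\|f\|_{\Leb^2}\le\|f\|_{\Leb^p}$. Then density in $\Leb^2$ suffices for the remaining range $p\in[1+\upe^{-2t},2)$.
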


\section{Structural identities and derivative estimates}\label{subsec:derivative-estimate}

In this section, we establish fundamental structural identities that govern the behavior of both the noise and OU semigroups. These formulas decompose the action of the semigroup applied to the square of a function into a weighted sum of the squared high-order derivatives of the evolved function. From these precise algebraic characterizations, we subsequently deduce Bismut-type derivative estimates and interpolation bounds.

\subsection{Semigroup structural identities}

We first present the structural identities of both the noise semigroup in the Boolean setting and the OU semigroup in the Gaussian setting.

More specifically, the noise semigroup satisfies the following structural identity:
\begin{proposition}
\label{prop:local-derivative-identity-Boolean}
For all $f:\cube^n\to \Rbb$ and $t\geq 0$,
\begin{equation}\label{eq:local-derivative-identity-Boolean}
    \Pop_t \left(\left|f\right|^2\right) = \sum_{J\subseteq [n]} \left(\upe^{2t}-1\right)^{|J|}\left|\Dop_J \Pop_t f\right|^2 = \sum_{k=0}^n \left(\upe^{2t}-1\right)^{k}\left|\grad^k \Pop_t f\right|^2.
\end{equation}
\end{proposition}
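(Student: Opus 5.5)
Both sides are quadratic in $f$, so I will prove the identity pointwise at a fixed $x\in\cube^n$ by expanding everything in the Walsh basis and matching coefficients. The natural route is a tensorization argument. The transition kernel $p_t(x,\cdot)$ is a product measure, and the derivatives $\Dop_J$ factor coordinatewise. The first step is to verify the one-dimensional case $n=1$ directly, and the second is to tensorize.

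For $n=1$, write $f(y)=a+by$. Then $\Pop_t(f^2)(x)=\Exp[(a+b\ybold)^2]=a^2+b^2+2ab\,\upe^{-t}x$, using $\ybold^2=1$ and $\Exp\ybold=\upe^{-t}x$. On the other side, $\Pop_t f(x)=a+b\upe^{-t}x$ and $\Dop_1\Pop_t f=b\upe^{-t}$. So the right-hand side equals
\[
(a+b\upe^{-t}x)^2+(\upe^{2t}-1)b^2\upe^{-2t}=a^2+2ab\upe^{-t}x+b^2\upe^{-2t}+b^2-b^2\upe^{-2t},
\]
and the two sides agree.

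For general $n$, I will recast the one-dimensional identity as an operator statement. For each coordinate $i$, let $\Pop_t^{(i)}$ denote the noise operator acting only in coordinate $i$. The claim is that for any function $g$ of $(y,y')\in\cube^n\times\cube^n$,
\[
\Pop_t^{(i)}\otimes\text{(diagonal)}\,g=\sum_{\epsilon\in\{0,1\}}(\upe^{2t}-1)^{\epsilon}\,(\Dop_i^{\epsilon}\Pop_t^{(i)}\otimes \Dop_i^{\epsilon}\Pop_t^{(i)})g,
\]
evaluated on the diagonal. Stated concretely: for bilinear expressions $\Exp_{\ybold}[F(\ybold)G(\ybold)]$ in one coordinate,
\[
\Pop_t^{(i)}(FG)=\Pop_t^{(i)}F\cdot\Pop_t^{(i)}G+(\upe^{2t}-1)\,\Dop_i\Pop_t^{(i)}F\cdot\Dop_i\Pop_t^{(i)}G.
\]
This follows by polarizing the $n=1$ computation, with all other coordinates treated as parameters. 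The operators in distinct coordinates commute: $\Pop_t=\prod_i\Pop_t^{(i)}$, each $\Dop_j$ commutes with $\Pop_t^{(i)}$ for $i\ne j$, and $\Pop_t^{(i)}$ is linear. Applying the product rule successively for $i=1,\dots,n$ to $\Pop_t(ff)=\Pop_t^{(1)}\cdots\Pop_t^{(n)}(ff)$ therefore expands it as a sum over subsets $J$ of the coordinates that took the derivative branch. The term indexed by $J$ is $(\upe^{2t}-1)^{|J|}(\Dop_J\Pop_t f)^2$. Grouping the terms by $|J|=k$ gives the second equality by the definition of $|\grad^k\cdot|^2$.

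As a cross-check, or as an alternative proof, I can compare Fourier coefficients directly. Using $\Dop_J\Pop_t f=\sum_{S\supseteq J}\upe^{-t|S|}\widehat f(S)\chi_{S\setminus J}$, the coefficient of $\widehat f(S)\widehat f(T)$ on the right-hand side is
\[
\upe^{-t(|S|+|T|)}\chi_{S\Delta T}\sum_{J\subseteq S\cap T}(\upe^{2t}-1)^{|J|}=\upe^{-t|S\Delta T|}\chi_{S\Delta T},
\]
which is the coefficient of $\Pop_t(\chi_S\chi_T)$. The only step requiring care is this bookkeeping: checking that $\chi_{S\setminus J}\chi_{T\setminus J}=\chi_{S\Delta T}$ for $J\subseteq S\cap T$, and applying the binomial identity $\sum_{J\subseteq S\cap T}(\upe^{2t}-1)^{|J|}=\upe^{2t|S\cap T|}$.
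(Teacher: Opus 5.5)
Your proof is correct, and both of your arguments differ from the paper's. The paper proves a discrete Bismut-type formula $\Dop_J\Pop_t f(x)=(\upe^{2t}-1)^{-|J|/2}\Exp_{\ybold\sim p_{t,x}}[\phi_J^{t,x}(\ybold)f(\ybold)]$ by differentiating the product kernel. It then notes that $\{\phi_J^{t,x}\}_{J}$ is an orthonormal basis of $\Leb^2(p_{t,x})$ and concludes by Parseval's identity for the local measure: $\Pop_t(|f|^2)(x)=\Exp_{p_{t,x}}|f|^2$ is the sum of the squared local coefficients, which are exactly $(\upe^{2t}-1)^{|J|/2}\Dop_J\Pop_t f(x)$. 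Your Walsh coefficient matching is already a complete proof. It only needs the exchange of $\sum_J\sum_{S,T\supseteq J}$ into $\sum_{S,T}\sum_{J\subseteq S\cap T}$, together with $(S\setminus J)\Delta(T\setminus J)=S\Delta T$ and $|S|+|T|-2|S\cap T|=|S\Delta T|$. Your tensorization is also sound once the garbled $\otimes$/``diagonal'' display is replaced by the polarized one-coordinate product rule, which is valid because every function is affine in each $y_i$. Your route is more elementary: it needs no kernel computation or local basis. It also covers $t=0$ automatically with $0^0=1$, whereas there $\phi_J^{t,x}$ is undefined and the paper's Bismut lemma requires $t>0$. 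The paper's route buys interpretation and portability. It identifies the scaled derivatives of $\Pop_t f$ at $x$ as local Fourier coefficients of $f$, and the same template gives the Ornstein--Uhlenbeck version (Proposition~\ref{prop:local-derivative-identity-Gaussian}) with local Hermite polynomials. Your arguments instead rely on Boolean-specific structure. The rule $\chi_S\chi_T=\chi_{S\Delta T}$ has no clean Gaussian counterpart; one would need Hermite linearization formulas. And a function of one Gaussian variable is not affine, so the one-dimensional identity you would tensorize is itself the full local Parseval series.
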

\begin{remark}
Combining the structural identity \eqref{eq:local-derivative-identity-Boolean} and the $\Leb^\infty$ contraction property of $\Pop_t$, we immediately deduce the following Bismut-type estimate for high-order gradient tensors:
\begin{equation}\label{eq:Bismut-grad-bound-Boolean}
    \left\|\left|\grad^k \Pop_t f\right|\right\|_{\Leb^\infty} \leq \left(\upe^{2t}-1\right)^{-k/2} \left\|\Pop_t \left|f\right|^2\right\|_{\Leb^\infty}^{1/2} \leq \left(\upe^{2t}-1\right)^{-k/2} \left\|f\right\|_{\Leb^\infty}.
\end{equation}
Although this identity also yields corresponding bounds for individual discrete directional derivatives $\Dop_J$, the direct operator-boundedness of $\Dop_J$ (see Lemma~\ref{lem:derivative-operator-norm}) together with the decay estimate \eqref{eq:derivative-semigroup-decay-Boolean} provides a superior estimate:
\begin{equation}\label{eq:Bismut-J-der-bound-Boolean}
    \left\|\Dop_J \Pop_t f\right\|_{\Leb^\infty} \leq \upe^{-t|J|} \left\|\Dop_J f\right\|_{\Leb^\infty} \leq \upe^{-t|J|} \left\|f\right\|_{\Leb^\infty},
\end{equation}
circumventing the short-time singularity.
\end{remark}

Before establishing Proposition~\ref{prop:local-derivative-identity-Boolean}, we first prove a discrete Bismut-type formula:
\begin{lemma}
\label{lem:BEL-Boolean}
For all $J\subseteq [n]$, $f:\cube^n\to \Rbb$, and $t > 0$,
\begin{equation}\label{eq:BEL-Boolean-identity}
    \Dop_J \Pop_t f(x) = \left(\upe^{2t}-1\right)^{-|J|/2} \Exp_{\ybold\sim p_{t,x}} \left[\phi_J^{t,x}(\ybold)f(\ybold)\right],
\end{equation}
where $\phi_J^{t,x}(y) := \prod_{i\in J} \frac{y_i-\upe^{-t} x_i}{\sqrt{1-\upe^{-2t}}}$.
\end{lemma}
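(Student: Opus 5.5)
The plan is to reduce the identity to a single-coordinate computation, using the product structure of the transition kernel $p_t(x,y)=\prod_{i}\frac{1+\upe^{-t}x_iy_i}{2}$, and then verify the one-dimensional case by hand.

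First, write $\Pop_t f(x)=\sum_y f(y)\,p_t(x,y)$ and note that $\Dop_J$ acts only on the $x$-variable. Because $p_t(x,y)$ factorizes over coordinates, and $\Dop_i$ acts only on the $i$-th factor, we get
\begin{equation*}
    \Dop_J \Pop_t f(x)=\sum_{y} f(y)\,\Bigl(\prod_{i\in J}\Dop_i^{(x)}\tfrac{1+\upe^{-t}x_iy_i}{2}\Bigr)\prod_{i\notin J}\tfrac{1+\upe^{-t}x_iy_i}{2}.
\end{equation*}
The one-variable derivative is
\begin{equation*}
    \Dop_i^{(x)}\tfrac{1+\upe^{-t}x_iy_i}{2}=\tfrac{\upe^{-t}y_i}{2}.
\end{equation*}

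The key step is to rewrite each factor $\frac{\upe^{-t}y_i}{2}$ as the density factor $\frac{1+\upe^{-t}x_iy_i}{2}$ times a weight. That is, we want to check
\begin{equation*}
    \frac{\upe^{-t}y_i}{2}=\frac{1+\upe^{-t}x_iy_i}{2}\cdot\frac{\upe^{-t}(y_i-\upe^{-t}x_i)}{1-\upe^{-2t}}
\end{equation*}
for $x_i,y_i\in\{\pm1\}$. This follows by multiplying out and using $x_i^2=y_i^2=1$:
\begin{equation*}
    (1+\upe^{-t}x_iy_i)(y_i-\upe^{-t}x_i)=y_i-\upe^{-2t}y_i=(1-\upe^{-2t})\,y_i.
\end{equation*}

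It remains to match the constant. We have
\begin{equation*}
    \frac{\upe^{-t}}{1-\upe^{-2t}}=\frac{1}{\sqrt{1-\upe^{-2t}}}\cdot\frac{\upe^{-t}}{\sqrt{1-\upe^{-2t}}},
\end{equation*}
and $\frac{\upe^{-t}}{\sqrt{1-\upe^{-2t}}}=(\upe^{2t}-1)^{-1/2}$. So each coordinate $i\in J$ contributes a factor $(\upe^{2t}-1)^{-1/2}$ times $\frac{y_i-\upe^{-t}x_i}{\sqrt{1-\upe^{-2t}}}$. Taking the product over $i\in J$ and re-collecting $\prod_i\frac{1+\upe^{-t}x_iy_i}{2}=p_t(x,y)$ gives the expectation against $p_{t,x}$ in \eqref{eq:BEL-Boolean-identity}.

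There is no real obstacle here. The only care needed is the one-dimensional algebraic identity, which relies on $x_i^2=y_i^2=1$, together with the normalization $\frac{\upe^{-t}}{\sqrt{1-\upe^{-2t}}}=(\upe^{2t}-1)^{-1/2}$. The hypothesis $t>0$ ensures the denominators do not vanish. For $J=\varnothing$ the identity reduces to the definition \eqref{eq:def-semigroup-Boolean}.
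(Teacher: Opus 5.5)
Your proposal is correct and follows essentially the paper's argument: factor the kernel $p_t(x,y)$, differentiate coordinatewise to get $\frac{\upe^{-t}y_i}{2}$, and rewrite this as the kernel factor $\frac{1+\upe^{-t}x_iy_i}{2}$ times $(\upe^{2t}-1)^{-1/2}\frac{y_i-\upe^{-t}x_i}{\sqrt{1-\upe^{-2t}}}$ using $x_i^2=y_i^2=1$. The only cosmetic difference is that the paper divides by $1+\upe^{-t}x_iy_i$ and rationalizes, whereas you verify the identity by multiplying out, which avoids the division.
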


The first-order form ($|J|=1$) of \eqref{eq:BEL-Boolean-identity} was first formally introduced in \cite{IVV2020Rademacher}, serving as a key decoupling tool in the structural analysis of Banach spaces. While the high-order identity here follows directly by iterating the first-order relation over the coordinates in $J$, we provide a short proof for completeness.

\begin{proof}
By the kernel representation \eqref{eq:def-semigroup-Boolean},
\begin{equation*}
    \Dop_J \Pop_t f(x) = \sum_{y\in \cube^n} f(y)\, \Dop_J p_t(\cdot,y) (x),
\end{equation*}
where $p_t(\cdot,y)$ denotes the function $x\mapsto p_t(x,y)$ for a fixed $y$. To compute $\Dop_J p_t(\cdot,y)$, we first consider a single coordinate case $i\in [n]$. By definition of $\Dop_i$ and $p_t$,
\begin{align*}
    \Dop_i p_t(\cdot,y) (x) &= \frac{p_t(x^{(i\mapsto 1)},y)-p_t(x^{(i\mapsto -1)},y)}{2}\\
    &= \frac{\upe^{-t}y_i}{2}\prod_{j\neq i} \frac{1+\upe^{-t}x_j y_j}{2} = \frac{\upe^{-t}y_i}{1+\upe^{-t}x_i y_i}\, p_t(x,y).
\end{align*}
Since $x_i^2 = y_i^2 = 1$, algebraic simplification of the multiplier yields
\begin{equation*}
    \frac{\upe^{-t}y_i}{1+\upe^{-t}x_i y_i} = \frac{\upe^{-t}y_i\left(1-\upe^{-t}x_i y_i\right)}{1-\upe^{-2t}} = \left(\upe^{2t}-1\right)^{-1/2} \frac{y_i-\upe^{-t} x_i}{\sqrt{1-\upe^{-2t}}}.
\end{equation*}
Iterating this relation over $i\in J$, we obtain
\begin{equation*}
    \Dop_J p_t(\cdot,y) (x) = \prod_{i\in J}\frac{\upe^{-t}y_i}{1+\upe^{-t}x_i y_i}\, p_t(x,y) = \left(\upe^{2t}-1\right)^{-|J|/2} \phi_J^{t,x}(y) \, p_t(x,y).
\end{equation*}
Substituting this back into the kernel expansion completes the proof.
\end{proof}

The structural identity then follows from Parseval's identity w.r.t. the local measure:
\begin{proof}[Proof of Proposition \ref{prop:local-derivative-identity-Boolean}]
Since $p_{t,x}$ is exactly the product distribution of independent bits with mean vector $\upe^{-t} x$, an orthonormal basis of $\Leb^2(\cube^n,p_{t,x})$ is given by $\{\phi_J^{t,x}\}_{J\subseteq[n]}$. We can therefore expand $f$ in this local basis as
\begin{equation*}
    f(y) = \sum_{J\subseteq[n]} \widehat{f_{t,x}}(J) \phi_J^{t,x}(y),\quad \widehat{f_{t,x}}(J) := \Exp_{\ybold\sim p_{t,x}} \left[\phi_J^{t,x}(\ybold)f(\ybold)\right].
\end{equation*}
Further by the discrete Bismut-type formula \eqref{eq:BEL-Boolean-identity},
\begin{equation*}
    \widehat{f_{t,x}}(J) = \Exp_{\ybold\sim p_{t,x}} \left[\phi_J^{t,x}(\ybold)f(\ybold)\right] = \left(\upe^{2t}-1\right)^{|J|/2}\Dop_J \Pop_t f(x).
\end{equation*}
Applying Parseval's identity then immediately yields
\begin{equation*}
    \Pop_t \left(\left|f\right|^2\right)(x) = \Exp_{\ybold\sim p_{t,x}} \left[\left|f(\ybold)\right|^2\right] = \sum_{J\subseteq[n]} \widehat{f_{t,x}}(J)^2 = \sum_{J\subseteq[n]} \left(\upe^{2t}-1\right)^{|J|}\left|\Dop_J \Pop_t f(x)\right|^2
\end{equation*}
which completes the proof.
\end{proof}

\begin{remark}
Although derived via local Parseval relations, the identity \eqref{eq:local-derivative-identity-Boolean} is fundamentally a reflection of the intrinsic algebraic and tensor structure of the noise semigroup rather than a particularity of the Boolean setting. To illustrate, on a general product probability space, let $\Eop_i$ be the conditional expectation that integrates out the $i$-th coordinate, and define $\Eop_I := \prod_{i\in I} \Eop_i$ and $\Lop_I := \prod_{i\in I}(\Id-\Eop_i)$. Then the generalized noise semigroup $\Pop_t := \prod_{i\in [n]} (\upe^{-t}\Id + (1-\upe^{-t})\Eop_i)$ satisfies the global pointwise identity
\begin{equation}\label{eq:general-product-identity}
    \Pop_t\left(\left|f\right|^2\right) = \sum_{I,J\subseteq[n]} \left(\upe^t-1\right)^{|I|+|J|}\Eop_I \left(\left|\Lop_I \Lop_J \Pop_t f\right|^2\right).
\end{equation}
An analogous identity also holds for the depolarizing semigroup in quantum settings. This highlights that these formulas universally capture how tensorized Markovian evolutions govern higher-order fluctuations, independent of the underlying geometry.
\end{remark}

In the Gaussian setting, the OU semigroup also satisfies a similar structural identity:
\begin{proposition}\label{prop:local-derivative-identity-Gaussian}
For all $f\in \Leb^2(\Rbb^n,\gamma_n)$ and $t\geq 0$,
\begin{equation}\label{eq:local-derivative-identity-Gaussian}
    \Pop_t \left|f\right|^2 = \sum_{\kappa\in \Nbb^n} \left(\upe^{2t}-1\right)^{|\kappa|}\left|\Dop^\kappa \Pop_t f\right|^2 = \sum_{k=0}^\infty \left(\upe^{2t}-1\right)^{k}\left|\grad^k \Pop_t f\right|^2.
\end{equation}
\end{proposition}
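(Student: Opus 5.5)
We follow the Boolean argument (Lemma~\ref{lem:BEL-Boolean} and the proof of Proposition~\ref{prop:local-derivative-identity-Boolean}): a Gaussian Bismut-type formula for $\Dop^\kappa \Pop_t f$ against a local orthonormal basis, then Parseval with respect to the transition measure. Fix $x\in\Rbb^n$ and $t>0$; the case $t=0$ is trivial, since only $\kappa=\ovec$ survives on the right and both sides equal $|f|^2$. By Mehler's formula, $\Pop_t g(x)=\Exp_{\ybold\sim q_{t,x}}[g(\ybold)]$, where $q_{t,x}$ is the Gaussian law with mean $\upe^{-t}x$ and covariance $(1-\upe^{-2t})\Id$. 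Writing $\ybold=\upe^{-t}x+\sqrt{1-\upe^{-2t}}\,\zbold$ with $\zbold\sim\gamma_n$, the family
\begin{equation*}
    \psi_\kappa^{t,x}(y):=h_\kappa\Big(\frac{y-\upe^{-t}x}{\sqrt{1-\upe^{-2t}}}\Big),\qquad \kappa\in\Nbb^n,
\end{equation*}
is an orthonormal basis of $\Leb^2(\Rbb^n,q_{t,x})$, because the $h_\kappa$ form an orthonormal basis of $\Leb^2(\gamma_n)$.

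The key step is the Gaussian Bismut formula
\begin{equation*}
    \Dop^\kappa \Pop_t f(x)=\left(\upe^{2t}-1\right)^{-|\kappa|/2}\,\Exp_{\ybold\sim q_{t,x}}\left[\psi_\kappa^{t,x}(\ybold)f(\ybold)\right].
\end{equation*}
To prove it, write $\Pop_t f(x)=\int f(y)\,\rho(y-\upe^{-t}x)\,\dif y$, where $\rho$ is the density of $\Nscr(0,s^2\Id)$ and $s=\sqrt{1-\upe^{-2t}}$. Then $\Del_i^{m}$ in $x$ equals $(-\upe^{-t})^{m}$ times the $m$-th derivative of $\rho$ in its $i$-th argument. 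From $h_\alpha=(-1)^{|\alpha|}\varphi_n^{-1}\Dop^\alpha\varphi_n$ and the scaling $\rho(u)=s^{-n}\varphi_n(u/s)$ we get
\begin{equation*}
    \prod_i\tfrac{1}{\sqrt{\kappa_i!}}\,\Del_i^{\kappa_i}\rho(u)=(-1)^{|\kappa|}s^{-|\kappa|}\,h_\kappa(u/s)\,\rho(u).
\end{equation*}
Combining the two, the signs cancel and the prefactor is $\upe^{-t|\kappa|}s^{-|\kappa|}=(\upe^{2t}-1)^{-|\kappa|/2}$, which gives the formula. Differentiation under the integral sign is justified because $f\in\Leb^2(\gamma_n)$ and the Gaussian kernel has derivatives of all orders dominated by integrable Gaussians locally uniformly in $x$. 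Alternatively, one can check the formula on each $h_\alpha$ via \eqref{eq:derivative-Fourier-Gaussian} and \eqref{eq:semigroup-Fourier-Gaussian} and extend by density.

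Parseval in $\Leb^2(q_{t,x})$ then gives, for every $x$ with $\Pop_t(|f|^2)(x)<\infty$, which holds for all $x$ when $f\in\Leb^2(\gamma_n)$ and $t>0$,
\begin{equation*}
    \Pop_t|f|^2(x)=\sum_{\kappa}\Big(\Exp_{q_{t,x}}\left[\psi_\kappa^{t,x}f\right]\Big)^2=\sum_{\kappa}\left(\upe^{2t}-1\right)^{|\kappa|}\left|\Dop^\kappa\Pop_tf(x)\right|^2 .
\end{equation*}
Grouping the terms by $|\kappa|=k$ gives the second equality, by the definition of $|\grad^k\cdot|^2$.

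The main obstacle is the Hermite bookkeeping in the Bismut formula: the normalization $1/\sqrt{\kappa_i!}$ in $\Dop^\kappa$ must match the normalization of $h_\kappa$, and the scaling $s^{-|\kappa|}$ must combine with the chain-rule factor $\upe^{-t|\kappa|}$. We would also confirm the finiteness and integrability that justify differentiating under the integral when $f$ is only in $\Leb^2(\gamma_n)$.
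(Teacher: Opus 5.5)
Your proposal is correct and follows essentially the same route as the paper. The paper first proves the Gaussian Bismut formula (Lemma~\ref{lem:BEL-Gaussian}) by differentiating the Mehler kernel under the integral, using the chain rule and the Rodrigues-type definition of $h_\kappa$, and then applies Parseval in the local orthonormal basis $\{\psi_\kappa^{t,x}\}_{\kappa}$; your extra justifications (the trivial case $t=0$, finiteness of $\Pop_t(|f|^2)(x)$ for every $x$, and the Gaussian domination argument for differentiating under the integral) fill in details the paper only mentions in passing.
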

\begin{remark}
The structural identity \eqref{eq:local-derivative-identity-Gaussian}, in conjunction with the $\Leb^\infty$ contraction property of $\Pop_t$, similarly yields the following high-order Bismut-type derivative estimates
\begin{equation}\label{eq:Bismut-grad-bound-Gaussian}
    \left\|\left|\grad^k \Pop_t f\right|\right\|_{\Leb^\infty} \leq \left(\upe^{2t}-1\right)^{-k/2} \left\|f\right\|_{\Leb^\infty}, \ \left\|\Dop^\kappa \Pop_t f \right\|_{\Leb^\infty} \leq \left(\upe^{2t}-1\right)^{-|\kappa|/2} \left\|f\right\|_{\Leb^\infty}.
\end{equation}
Here, the high-order (normalized) partial derivatives are unbounded on $\Leb^\infty(\Rbb^n, \gamma_n)$, thus an analogue of the singularity-free estimate \eqref{eq:Bismut-J-der-bound-Boolean} does not hold for $\Dop^\kappa$.
\end{remark}

Analogously, we first establish a Gaussian Bismut-type formula:
\begin{lemma}
\label{lem:BEL-Gaussian}
For all $\kappa\in \Nbb^n$, $f\in \Leb^2(\Rbb^n,\gamma_n)$, and $t > 0$,
\begin{equation}\label{eq:BEL-Gaussian-identity}
    \Dop^\kappa \Pop_t f(x) = \left(\upe^{2t}-1\right)^{-|\kappa|/2} \Exp_{\ybold\sim p_{t,x}} \left[\psi_\kappa^{t,x}(\ybold)f(\ybold)\right],
\end{equation}
where $p_{t,x}$ denotes the biased multivariate Gaussian distribution with mean vector $\upe^{-t}x$ and covariance matrix $\left(1-\upe^{-2t}\right)\Ibf_n$, and $\psi_\kappa^{t,x}(y) := h_\kappa(\frac{y-\upe^{-t} x}{\sqrt{1-\upe^{-2t}}})$.
\end{lemma}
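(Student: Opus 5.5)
We prove the Gaussian Bismut-type formula \eqref{eq:BEL-Gaussian-identity} by differentiating Mehler's kernel directly, in parallel with the proof of Lemma~\ref{lem:BEL-Boolean}. First we rewrite Mehler's formula as an integral against the transition density. Set $\rho := 1-\upe^{-2t}$ for $t>0$. Then
\begin{equation*}
    \Pop_t f(x) = \int_{\Rbb^n} f(y)\, q_t(x,y)\dif y, \qquad q_t(x,y) := (2\uppi\rho)^{-n/2}\exp\Big(-\frac{|y-\upe^{-t}x|^2}{2\rho}\Big),
\end{equation*}
so that $\Exp_{\ybold\sim p_{t,x}}[g(\ybold)] = \int g(y)\,q_t(x,y)\dif y$. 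The kernel is smooth in $x$, and its $x$-derivatives are polynomial in $y$ times Gaussian factors. Since $f\in\Leb^2(\gamma_n)$ and $q_t(x,\cdot)/\varphi_n$ has a Gaussian tail of variance $\rho<1$, the derivatives can be passed under the integral by dominated convergence. This justification, together with identifying $\Dop^\kappa$ on $\Pop_t f$ (a smooth function) with its classical derivative, is the only analytic point. We then get
\begin{equation*}
    \Dop^\kappa\Pop_t f(x) = \int f(y)\,\Dop^\kappa_x q_t(x,y)\dif y.
\end{equation*}

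Next we compute $\Dop^\kappa_x q_t$. Write $u := (y-\upe^{-t}x)/\sqrt\rho$, so that $q_t(x,y) = \rho^{-n/2}\varphi_n(u)$ and $\partial_{x_i} = -\upe^{-t}\rho^{-1/2}\,\partial_{u_i}$. Hence
\begin{equation*}
    \Del^{\kappa}_x q_t = (-\upe^{-t}\rho^{-1/2})^{|\kappa|}\rho^{-n/2}\,\Del^\kappa_u\varphi_n(u).
\end{equation*}
From the definition $h_\kappa = (-1)^{|\kappa|}\varphi_n^{-1}\Dop^\kappa\varphi_n$ we have $\Dop^\kappa_u\varphi_n(u) = (-1)^{|\kappa|} h_\kappa(u)\varphi_n(u)$, where the $\prod_i(\kappa_i!)^{-1/2}$ normalization is the same on both sides. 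Therefore
\begin{equation*}
    \Dop^\kappa_x q_t(x,y) = \big(\upe^{-t}\rho^{-1/2}\big)^{|\kappa|}\, h_\kappa(u)\, q_t(x,y).
\end{equation*}
Finally, $\upe^{-t}(1-\upe^{-2t})^{-1/2} = (\upe^{2t}-1)^{-1/2}$ and $h_\kappa(u)=\psi^{t,x}_\kappa(y)$. Substituting into the integral gives \eqref{eq:BEL-Gaussian-identity}.

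An alternative check runs through the Hermite side, using \eqref{eq:derivative-Fourier-Gaussian} and \eqref{eq:semigroup-Fourier-Gaussian} against the generating function of $h_\kappa$. The direct kernel computation is cleaner, so we would use it. The only care needed is the interchange of derivative and integral, which is routine: $f\in\Leb^2(\gamma_n)$, and $\int |h_\kappa(u)|^2 q_t(x,y)^2/\varphi_n(y)\dif y<\infty$ locally uniformly in $x$ because $\rho<1$.
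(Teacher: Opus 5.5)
Your proposal is correct and follows essentially the same route as the paper. Both rewrite Mehler's formula as an integral against the Gaussian transition density, differentiate under the integral sign, and apply the chain rule $\partial_{x_i} = -(\upe^{2t}-1)^{-1/2}\partial_{u_i}$ together with $\Dop^\kappa\varphi_n = (-1)^{|\kappa|}h_\kappa\varphi_n$, so that the signs cancel; your dominated-convergence justification of the interchange, using $f\in\Leb^2(\gamma_n)$ and the integrability of $q_t(x,\cdot)^2/\varphi_n$, is more explicit than the paper's brief appeal to the smoothing property $\Pop_t f\in\bigcap_{k}\Sob^k(\Rbb^n,\gamma_n)$.
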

\begin{proof}
Rewriting Mehler's formula by substituting $y=\upe^{-t}x+\sqrt{1-\upe^{-2t}}z$, we obtain
\begin{equation*}
    \Pop_t f(x) = \int_{\Rbb^n} f(\upe^{-t}x+\sqrt{1-\upe^{-2t}}z)\, \varphi_n(z) \dif z= \int_{\Rbb^n} f(y)\,  p_t(x,y)\dif y
\end{equation*}
where $p_t(x,y):=\left(1-\upe^{-2t}\right)^{-n/2}\varphi_n\left(\frac{y-\upe^{-t}x}{\sqrt{1-\upe^{-2t}}}\right)$ is the density of $p_{t,x}$. For $f\in \Leb^2(\Rbb^n,\gamma_n)$ and $t>0$, the standard regularizing effect of the symmetric Markov semigroup ensures that $\Pop_t f\in \bigcap_{k=1}^\infty \Sob^k(\Rbb^n,\gamma_n)$, which justifies differentiating directly under the integral sign:
\begin{equation*}
    \Dop^\kappa \Pop_t f (x) = \int_{\Rbb^n} f(y)\, \Dop^\kappa p_t(\cdot,y) (x) \dif y.
\end{equation*}
By the chain rule of differentiation,
\begin{equation*}
    \Dop^\kappa p_t(\cdot,y) (x) = \left(1-\upe^{-2t}\right)^{-n/2} \Dop^\kappa \varphi_n\left(\frac{y-\upe^{-t}x}{\sqrt{1-\upe^{-2t}}}\right) \prod_{i\in [n]} \left(\frac{\partial}{\partial x_i} \frac{y_i-\upe^{-t}x_i}{\sqrt{1-\upe^{-2t}}} \right)^{\kappa_i}.
\end{equation*}
The definition of $h_\kappa$ and direct computation yield
\begin{equation*}
    \Dop^\kappa \varphi_n = \left(-1\right)^{|\kappa|}  \varphi_n\, h_{\kappa}, \quad \frac{\partial}{\partial x_i} \frac{y_i-\upe^{-t}x_i}{\sqrt{1-\upe^{-2t}}} = - \left(\upe^{2t}-1\right)^{-1/2}.
\end{equation*}
Altogether, the negative signs $\left(-1\right)^{|\kappa|}$ cancel out perfectly, leaving
\begin{equation*}
    \Dop^\kappa p_t(\cdot,y) (x) = \left(\upe^{2t}-1\right)^{-|\kappa|/2}\, \psi_\kappa^{t,x}(y)\, p_t(x,y).
\end{equation*}
Substituting this back into the kernel expansion completes the proof.
\end{proof}

The structural identity then follows via a similar local Parseval discussion:
\begin{proof}[Proof of Proposition~\ref{prop:local-derivative-identity-Gaussian}]
By construction, the local Hermite polynomials $\{\psi_\kappa^{t,x}\}_{\kappa\in \Nbb^n}$ form an orthonormal basis of $\Leb^2(\Rbb^n,p_{t,x})$, yielding the local Fourier expansion
\begin{equation*}
    f(y) = \sum_{\kappa \in \Nbb^n} \widehat{f_{t,x}}(\kappa) \psi_\kappa^{t,x}(y),\quad \text{where } \widehat{f_{t,x}}(\kappa) := \Exp_{\ybold\sim p_{t,x}} \left[\psi_\kappa^{t,x}(\ybold)f(\ybold)\right].
\end{equation*}
By the Gaussian Bismut-type formula \eqref{eq:BEL-Gaussian-identity}, these local Fourier coefficients satisfy
\begin{equation*}
    \widehat{f_{t,x}}(\kappa) = \left(\upe^{2t}-1\right)^{|\kappa|/2} \Dop^\kappa \Pop_t f(x).
\end{equation*}
Applying Parseval's identity immediately yields
\begin{equation*}
    \Pop_t \left(\left|f\right|^2\right)(x) = \Exp_{\ybold\sim p_{t,x}} \left[\left|f(\ybold)\right|^2\right] = \sum_{\kappa \in \Nbb^n} \widehat{f_{t,x}}(\kappa)^2 = \sum_{\kappa \in \Nbb^n} \left(\upe^{2t}-1\right)^{|\kappa|}\left|\Dop^\kappa \Pop_t f(x)\right|^2,
\end{equation*}
which completes the proof.
\end{proof}

\subsection{Interpolation bounds}

Using the Bismut-type derivative estimates provided by the structural identities, together with the energy-decay estimates provided by the semigroup--derivative commutation laws, we deduce the following interpolation bounds.

We first prove the following semigroup-smoothed interpolation bound for the high-order gradient tensors in the Boolean setting:
\begin{lemma}\label{lem:energy-bound-Boolean}
For all $k\in \Nbb_{\leq n}$, $f:\cube^n\to \Rbb$, $t> 0$ and $p\in [1,2]$,
\begin{equation}\label{eq:energy-bound-Boolean}
    \left\|\left|\grad^k \Pop_t f\right|\right\|_{\Leb^2}^{2} \leq \upe^{-2k\left(p-1\right)t}\left(\upe^{4t}-1\right)^{-k(1-\frac{p}{2})}  \left\|f\right\|_{\Leb^{\infty}}^{2-p} \left\|\left|\grad^k f\right|\right\|_{\Leb^p}^{p}.
\end{equation}
\end{lemma}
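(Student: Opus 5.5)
The plan is to avoid bounding $\left|\grad^k \Pop_t f\right|^2$ directly. Instead I will move half of the semigroup onto the other factor of the inner product, so that the Bismut estimate is applied at time $2t$. This is what produces the factor $(\upe^{4t}-1)$ rather than $(\upe^{2t}-1)$.

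\emph{Step 1 (symmetrization).} By Lemma~\ref{lem:derivative-semigroup-decay-Boolean}, $\Dop_J \Pop_t f = \upe^{-kt}\Pop_t \Dop_J f$ for $|J|=k$. Self-adjointness of $\Pop_t$ in $\Leb^2$ then gives
\begin{equation*}
\left\|\left|\grad^k \Pop_t f\right|\right\|_{\Leb^2}^2=\sum_{|J|=k}\upe^{-kt}\langle \Pop_t\Dop_J \Pop_t f,\Dop_J f\rangle_{\Leb^2}=\sum_{|J|=k}\langle \Dop_J\Pop_{2t} f,\Dop_J f\rangle_{\Leb^2}.
\end{equation*}
The second equality uses $\Pop_t \Dop_J \Pop_t f=\upe^{kt}\Dop_J\Pop_{2t}f$, which is again Lemma~\ref{lem:derivative-semigroup-decay-Boolean}. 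Cauchy--Schwarz in $J$ then bounds the right-hand side by $\Eop\big[\left|\grad^k\Pop_{2t}f\right|\,\left|\grad^k f\right|\big]$.

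\emph{Step 2 (pointwise interpolation).} Set $A:=\left|\grad^k\Pop_{2t}f\right|$ and $g:=\left|\grad^k f\right|$. Two pointwise bounds on $A$ are available:
\begin{itemize}
\item the Bismut estimate \eqref{eq:Bismut-grad-bound-Boolean} at time $2t$, which gives $A\leq (\upe^{4t}-1)^{-k/2}\left\|f\right\|_{\Leb^\infty}$;
\item the commutation bound \eqref{eq:k-derivative-semigroup-decay-Boolean}, which gives $A\leq \upe^{-2kt}\Pop_{2t}g$.
\end{itemize}
Write $A=A^{2-p}A^{p-1}$, which is legitimate since $p\in[1,2]$. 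Applying the first bound to $A^{2-p}$ and the second to $A^{p-1}$ gives
\begin{equation*}
A\leq \upe^{-2k(p-1)t}\left(\upe^{4t}-1\right)^{-k(1-\frac p2)}\left\|f\right\|_{\Leb^\infty}^{2-p}\left(\Pop_{2t}g\right)^{p-1}.
\end{equation*}
These are exactly the prefactors in \eqref{eq:energy-bound-Boolean}.

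\emph{Step 3 (closing).} It remains to show $\Eop[(\Pop_{2t}g)^{p-1}g]\leq \left\|g\right\|_{\Leb^p}^p$. Hölder's inequality with exponents $p/(p-1)$ and $p$ gives
\begin{equation*}
\Eop\big[(\Pop_{2t}g)^{p-1}g\big]\leq \left\|\Pop_{2t}g\right\|_{\Leb^p}^{p-1}\left\|g\right\|_{\Leb^p}.
\end{equation*}
The $\Leb^p$ contractivity of $\Pop_{2t}$ then yields the claim. For $p=1$ this step is trivial, with the convention $(\cdot)^0=1$.

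The main obstacle is recognizing Step 1. The naive route, which bounds $\left|\grad^k\Pop_t f\right|^{2-p}$ in $\Leb^\infty$ at time $t$ and $\left\|\left|\grad^k\Pop_t f\right|\right\|_{\Leb^p}^p$ by decay, does prove a bound. But it only gives the weaker constant $\upe^{-kpt}(\upe^{2t}-1)^{-k(1-p/2)}$. The doubling trick is what makes the exponents match the statement exactly. Beyond that, the steps are routine checks of sign and exponent bookkeeping.
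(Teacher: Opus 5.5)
Your argument is correct and gives exactly the constant in \eqref{eq:energy-bound-Boolean}. The symmetrization $\|\,|\grad^k\Pop_t f|\,\|_{\Leb^2}^2=\sum_{|J|=k}\langle\Dop_J f,\Dop_J\Pop_{2t}f\rangle_{\Leb^2}$ and the use of the Bismut bound at time $2t$ are the same as in the paper. Where you differ is the closing step.

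The paper applies H\"older globally with exponents $p$ and $p/(p-1)$, then interpolates $\|\,|\grad^k\Pop_{2t}f|\,\|_{\Leb^{p/(p-1)}}$ between $\Leb^2$ and $\Leb^\infty$. It controls the $\Leb^2$ factor by the energy decay $\|\,|\grad^k\Pop_{2t}f|\,\|_{\Leb^2}^2\le\upe^{-2kt}\|\,|\grad^k\Pop_t f|\,\|_{\Leb^2}^2$. The left-hand side therefore reappears on the right with power $(p-1)/p$, and the paper cancels it after assuming it is nonzero.

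You instead split $A=A^{2-p}A^{p-1}$ pointwise and control $A^{p-1}$ by the pointwise domination \eqref{eq:k-derivative-semigroup-decay-Boolean} at time $2t$. You then close with H\"older and the $\Leb^p$-contractivity of $\Pop_{2t}$.

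Your version is slightly more direct, since it has no self-referential absorption step. The paper's version needs only norm-level inputs on the evolved gradient (the $\Leb^2$ decay and the $\Leb^\infty$ Bismut bound) rather than pointwise Jensen domination, which makes it somewhat more portable. In the Boolean and Gaussian settings both inputs are available, so either argument works verbatim. Your remark that the undoubled route only yields the weaker factor $\upe^{-kpt}(\upe^{2t}-1)^{-k(1-p/2)}$ is also correct.
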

\begin{proof}
By definition
\begin{equation*}
    \left\|\left|\grad^k \Pop_t f\right|\right\|_{\Leb^2}^{2} = \sum_{J\subseteq[n]:|J|=k} \langle \Dop_J \Pop_t f, \Dop_J \Pop_{t} f \rangle_{\Leb^2}.
\end{equation*}
By the symmetry of the semigroup $\Pop_t$ and the commutation law \eqref{eq:J-derivative-semigroup-decay-Boolean},
\begin{align*}
    \langle \Dop_J \Pop_t f, \Dop_J \Pop_{t} f \rangle_{\Leb^2} &= \upe^{-t|J|}\langle \Pop_{t} \Dop_J f, \Dop_J \Pop_t f\rangle_{\Leb^2} \\
    &= \upe^{-t|J|}\langle\Dop_J f , \Pop_{t} \Dop_J \Pop_t f\rangle_{\Leb^2} = \langle \Dop_J f, \Dop_J \Pop_{2t} f \rangle_{\Leb^2}.
\end{align*}
Using the Cauchy inequality (w.r.t. summation over $J$) and the H\"{o}lder inequality, we obtain
\begin{equation*}
    \sum_{J\subseteq [n]:|J|=k} \langle \Dop_J f, \Dop_J \Pop_{2t} f \rangle_{\Leb^2} \leq \left\langle\left|\grad^k f\right|,\left|\grad^k \Pop_{2t} f\right|\right\rangle_{\Leb^{2}} \leq \left\|\left|\grad^k f\right|\right\|_{\Leb^{p}} \left\|\left|\grad^k \Pop_{2t} f\right|\right\|_{\Leb^{q}},
\end{equation*}
with $q=\frac{p}{p-1}$. The  logarithmic convexity of $\Leb^{q}$ norms (Lemma~\ref{lem:norm-log-convex}) further yields
\begin{equation*}
    \left\|\left|\grad^k \Pop_{2t} f\right|\right\|_{\Leb^{q}}\leq \left\|\left|\grad^k \Pop_{2t} f\right|\right\|_{\Leb^{2}}^{2\frac{p-1}{p}} \left\|\left|\grad^k \Pop_{2t} f\right|\right\|_{\Leb^{\infty}}^{\frac{2-p}{p}}.
\end{equation*}
For the first term, the energy-decay estimate \eqref{eq:derivative-semigroup-decay-Boolean} yields
\begin{equation*}
    \left\|\left|\grad^k \Pop_{2t} f\right|\right\|_{\Leb^{2}}^2  \leq \upe^{-2kt}\left\|\left|\grad^k \Pop_{t} f\right|\right\|_{\Leb^{2}}^2.
\end{equation*}
For the second term, the Bismut-type gradient estimate \eqref{eq:Bismut-grad-bound-Boolean} yields
\begin{equation*}
    \left\|\left|\grad^k \Pop_{2t} f\right|\right\|_{\Leb^{\infty}} \leq \left(\upe^{4t}-1\right)^{-\frac{k}{2}} \left\|f\right\|_{\Leb^{\infty}}.
\end{equation*}
Combining the bounds, we obtain
\begin{equation*}
    \left\|\left|\grad^k \Pop_t f\right|\right\|_{\Leb^2}^{2} \leq \left(\upe^{-2 kt} \left\|\left|\grad^k \Pop_t f\right|\right\|_{\Leb^2}^{2}\right)^{\frac{p-1}{p}} \left(\left(\upe^{4t}-1\right)^{-\frac{k}{2}} \left\|f\right\|_{\Leb^{\infty}}\right)^{\frac{2-p}{p}} \left\|\left|\grad^k  f\right|\right\|_{\Leb^{p}}.
\end{equation*}
Assume w.l.o.g. that $\left\|\left|\grad^k \Pop_t f\right|\right\|_{\Leb^2}>0$. Raising both sides with power $p$ and cancelling $\left\|\left|\grad^k \Pop_t f\right|\right\|_{\Leb^2}^{2(p-1)}$, we complete the proof.
\end{proof}

Compared to the dimension-free estimates (\eqref{eq:Bismut-grad-bound-Boolean} and \eqref{eq:energy-bound-Boolean}) for the gradient tensors, the discrete derivatives possess superior estimates due to the $\Leb^\infty$ operator-boundedness:
\begin{lemma}\label{lem:derivative-operator-norm}
For $J\subseteq[n]$, $\left\|\Dop_J\right\|_{\Leb^\infty\to \Leb^\infty}=1$.
\end{lemma}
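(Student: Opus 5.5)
We need two bounds: $\left\|\Dop_J f\right\|_{\Leb^\infty}\leq \left\|f\right\|_{\Leb^\infty}$, and a function that attains it.

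For the upper bound we argue by induction on $|J|$. The base case $J=\varnothing$ is trivial, since $\Dop_\varnothing=\Id$. For a single coordinate $i$, the definition gives
\begin{equation*}
    \left|\Dop_i f(x)\right| = \tfrac{1}{2}\left|f(x^{(i\mapsto 1)})-f(x^{(i\mapsto -1)})\right| \leq \tfrac{1}{2}\left(\left|f(x^{(i\mapsto 1)})\right|+\left|f(x^{(i\mapsto -1)})\right|\right)\leq \left\|f\right\|_{\Leb^\infty}.
\end{equation*}
The last step uses that the measure $\sigma_n$ is uniform with full support on the finite cube, so the $\Leb^\infty$ norm is just the maximum of $|f|$ over all points. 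Hence $\left\|\Dop_i\right\|_{\Leb^\infty\to\Leb^\infty}\leq 1$. Since $\Dop_J=\prod_{i\in J}\Dop_i$ is a composition of such operators, submultiplicativity of operator norms gives $\left\|\Dop_J\right\|_{\Leb^\infty\to\Leb^\infty}\leq 1$.

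For the matching lower bound we take the Walsh function $f=\chi_J$. Formula \eqref{eq:derivative-Fourier-Boolean} gives $\Dop_J\chi_J=\chi_{\varnothing}=1$. Therefore
\begin{equation*}
    \left\|\Dop_J \chi_J\right\|_{\Leb^\infty}=1=\left\|\chi_J\right\|_{\Leb^\infty},
\end{equation*}
so the norm is exactly $1$.

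There is no real obstacle here. The only point that needs care is that the $\Leb^\infty$ norm is taken with respect to $\sigma_n$, and that this coincides with the pointwise supremum because $\sigma_n$ charges every point of the cube.
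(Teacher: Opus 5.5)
Your proof is correct and matches the paper's argument: the pointwise bound $\left|\Dop_i f(x)\right|\leq \left\|f\right\|_{\Leb^\infty}$ for a single coordinate, iteration over $i\in J$, and equality attained at $\chi_J$ because $\Dop_J\chi_J=1$. Your remark that the $\Leb^\infty(\sigma_n)$ norm equals the pointwise maximum, since $\sigma_n$ charges every point, is a small clarification the paper uses implicitly.
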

\begin{proof}
Let $i\in [n]$. By definition, for all $f:\cube^n\to \Rbb$ and $x\in \cube^n$ we have
\begin{equation*}
    \left|\Dop_i f(x)\right| = \left|\frac{f(x^{(i\mapsto +1)}) - f(x^{(i\mapsto -1)})}{2}\right| \leq \frac{\left\|f\right\|_{\Leb^\infty} + \left\|f\right\|_{\Leb^\infty}}{2} = \left\|f\right\|_{\infty},
\end{equation*}
and hence $\left\|\Dop_i\right\|_{\Leb^\infty\to \Leb^\infty}\leq 1$. Iterating across $i\in J$ gives $\left\|\Dop_J\right\|_{\Leb^\infty\to \Leb^\infty}\leq 1$. Conversely, the equality is attained since $\left\|\Dop_J \chi_J\right\|_{\Leb^\infty}=1= \left\|\chi_J\right\|_{\Leb^\infty}$.
\end{proof}
\begin{remark}
Using the operator norm bound, for all $p\in [1,2]$ we have
\begin{equation*}
    \left\|\Dop_J f\right\|_{\Leb^2}^{2} \leq \left\|\Dop_J f\right\|_{\Leb^\infty}^{2-p} \left\|\Dop_J f\right\|_{\Leb^p}^{p} \leq \left\|f\right\|_{\Leb^\infty}^{2-p} \left\|\Dop_J f\right\|_{\Leb^p}^{p}.
\end{equation*}
Combining the energy-decay estimate \eqref{eq:derivative-semigroup-decay-Boolean} yields a sharp interpolation bound
\begin{equation*}
    \left\|\Dop_J \Pop_t f\right\|_{\Leb^2}^{2} \leq \upe^{-2t|J|} \left\|\Dop_J f\right\|_{\Leb^2}^{2} \leq \upe^{-2t|J|} \left\|f\right\|_{\Leb^\infty}^{2-p} \left\|\Dop_J f\right\|_{\Leb^p}^{p}
\end{equation*}
circumventing the short-time singularity in \eqref{eq:energy-bound-Boolean}.
\end{remark}

In the Gaussian setting, analogous to Lemma~\ref{lem:energy-bound-Boolean}, the Bismut-type derivative bounds \eqref{eq:Bismut-grad-bound-Gaussian} combined with the energy-decay estimates \eqref{eq:derivative-semigroup-decay-Gaussian} also yield the following bounds:
\begin{lemma}\label{lem:energy-bound-Gaussian}
For all $k\in \Nbb$, $f\in \Sob^k(\Rbb^n,\gamma_n)\cap \Leb^{\infty}(\Rbb^n,\gamma_n)$, $t> 0$ and $p\in [1,2]$,
\begin{equation}\label{eq:energy-bound-Gaussian}
    \left\|\left|\grad^k \Pop_t f\right|\right\|_{\Leb^2}^{2} \leq \upe^{-2k\left(p-1\right)t}\left(\upe^{4t}-1\right)^{-k(1-\frac{p}{2})}  \left\|f\right\|_{\Leb^{\infty}}^{2-p} \left\|\left|\grad^k f\right|\right\|_{\Leb^p}^{p},
\end{equation}
and further for all $\kappa\in \Nbb^n$ with $|\kappa|=k$,
\begin{equation}\label{eq:partial-energy-bound-Gaussian}
    \left\|\Dop^\kappa \Pop_t f\right\|_{\Leb^2}^{2} \leq \upe^{-2k\left(p-1\right)t}\left(\upe^{4t}-1\right)^{-k(1-\frac{p}{2})}  \left\|f\right\|_{\Leb^{\infty}}^{2-p} \left\|\Dop^\kappa f\right\|_{\Leb^p}^{p}.
\end{equation}
\end{lemma}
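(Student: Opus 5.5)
The plan is to copy the proof of Lemma~\ref{lem:energy-bound-Boolean} line by line. The Boolean ingredients used there have exact Gaussian counterparts already established: the commutation law \eqref{eq:kappa-derivative-semigroup-decay-Gaussian}, the energy-decay estimate \eqref{eq:derivative-semigroup-decay-Gaussian}, and the Bismut-type bounds \eqref{eq:Bismut-grad-bound-Gaussian}. Fix $\kappa$ with $|\kappa|=k$. Note first that $\Pop_t f$ lies in every Sobolev space, and $\Dop^\kappa f\in\Leb^2$ because $f\in \Sob^k$. Using symmetry of $\Pop_t$ and \eqref{eq:kappa-derivative-semigroup-decay-Gaussian} twice, I would write
\begin{equation*}
\left\|\Dop^\kappa \Pop_t f\right\|_{\Leb^2}^2=\upe^{-tk}\langle \Pop_t\Dop^\kappa f,\Dop^\kappa\Pop_t f\rangle_{\Leb^2}=\upe^{-tk}\langle \Dop^\kappa f,\Pop_t\Dop^\kappa\Pop_t f\rangle_{\Leb^2}=\langle \Dop^\kappa f,\Dop^\kappa\Pop_{2t}f\rangle_{\Leb^2}.
\end{equation*}
This identity can also be checked directly on the Hermite expansion \eqref{eq:derivative-Fourier-Gaussian}, where both sides equal $\sum_{\alpha\ge\kappa}\binom{\alpha}{\kappa}\upe^{-2t|\alpha|}\widehat f(\alpha)^2$.

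For \eqref{eq:partial-energy-bound-Gaussian}, I would apply H\"older with conjugate exponent $q=p/(p-1)$ to get
\begin{equation*}
\left\|\Dop^\kappa \Pop_t f\right\|_{\Leb^2}^2\le\left\|\Dop^\kappa f\right\|_{\Leb^p}\left\|\Dop^\kappa\Pop_{2t}f\right\|_{\Leb^q}.
\end{equation*}
Log-convexity (Lemma~\ref{lem:norm-log-convex}) then gives
\begin{equation*}
\left\|\Dop^\kappa\Pop_{2t}f\right\|_{\Leb^q}\le\left\|\Dop^\kappa\Pop_{2t}f\right\|_{\Leb^2}^{2(p-1)/p}\left\|\Dop^\kappa\Pop_{2t}f\right\|_{\Leb^\infty}^{(2-p)/p}.
\end{equation*}
I bound the $\Leb^2$ factor by $\upe^{-kt}\|\Dop^\kappa\Pop_t f\|_{\Leb^2}$, using \eqref{eq:derivative-semigroup-decay-Gaussian} applied to $\Pop_t f$. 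I bound the $\Leb^\infty$ factor by $(\upe^{4t}-1)^{-k/2}\|f\|_{\Leb^\infty}$, using \eqref{eq:Bismut-grad-bound-Gaussian} at time $2t$. Assuming $\|\Dop^\kappa\Pop_t f\|_{\Leb^2}>0$ (the inequality is trivial otherwise), I raise to the power $p$ and cancel $\|\Dop^\kappa\Pop_tf\|_{\Leb^2}^{2(p-1)}$. This leaves exactly the exponent $\upe^{-2k(p-1)t}(\upe^{4t}-1)^{-k(1-p/2)}$.

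For the gradient version \eqref{eq:energy-bound-Gaussian}, I sum the first identity over $|\kappa|=k$ and apply Cauchy--Schwarz in $\kappa$ pointwise. This bounds $\|\,|\grad^k\Pop_tf|\,\|_{\Leb^2}^2$ by $\langle|\grad^kf|,|\grad^k\Pop_{2t}f|\rangle_{\Leb^2}$. The same H\"older/interpolation step follows, now using the tensor forms of \eqref{eq:derivative-semigroup-decay-Gaussian} and \eqref{eq:Bismut-grad-bound-Gaussian}.

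The only step needing care, rather than copying, is justification in the unbounded Gaussian setting. The pairing $\langle\Dop^\kappa f,\Dop^\kappa\Pop_{2t}f\rangle$ must be finite, and the norms $\|\Dop^\kappa f\|_{\Leb^p}$ must make sense. Both hold because $f\in\Sob^k\subset$ functions with $\Dop^\kappa f\in\Leb^2\subset\Leb^p$ for $p\le2$, and because $\Dop^\kappa\Pop_{2t}f\in\Leb^\infty$ by the Bismut bound. The commutation identity for Sobolev $f$ is Lemma~\ref{lem:derivative-semigroup-decay-Gaussian}. I therefore expect no genuine obstacle beyond checking that these exponents and the cancellation match the Boolean computation.
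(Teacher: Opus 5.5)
Your proposal is correct and matches the paper's approach exactly. The paper gives no separate proof here; it simply transplants the proof of Lemma~\ref{lem:energy-bound-Boolean}, using symmetry and the commutation law to reach $\langle \Dop^\kappa f,\Dop^\kappa\Pop_{2t}f\rangle_{\Leb^2}$, then H\"older, log-convexity, energy decay, the Bismut bound at time $2t$, and cancellation, which is precisely your argument (your integrability checks are a harmless addition).
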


\section{High-order Besov-type variance functionals}\label{sec:high-order-variance}

In this section, we introduce and analyze the high-order Besov-type variance functionals, which serve as the engine of our high-order variance-decay framework. By construction, these functionals are intrinsically tied to the Besov $\Besov^0_{2,2/k}$ quasi-norms, thereby capturing intermediate regularity scales that lie strictly between classical Sobolev $\Sob^k$ and $\Leb^2$ spaces.

More specifically, in the Boolean setting, for $f:\cube^n\to \Rbb$ and nonzero $k\in \Nbb_{\leq n}$, we define the \textit{$k$-th order variance functional} of $f$ by
\begin{equation}\label{eq:k-var-def-Boolean}
    \Vop^k(f) := \left(\int_0^\infty 2 \left\|\left|\grad^k \Pop_t f\right|\right\|_{\Leb^2}^{2/k} \dif t\right)^{k},
\end{equation}
and further for nonempty $J\subseteq [n]$, we define the \textit{$J$-partial variance functional} of $f$ by
\begin{equation}\label{eq:J-var-def-Boolean}
    \Vop_J(f) := \left(\int_0^\infty 2 \left\|\Dop_J \Pop_t f\right\|_{\Leb^2}^{2/|J|} \dif t\right)^{|J|}.
\end{equation}
For the trivial cases $k=0$ and $J=\varnothing$, we naturally set $\Vop^0(f)=\Vop_\varnothing(f):=\left\|f\right\|_{\Leb^2}^2$.

The definition in the Gaussian setting is similar: for $f\in \Leb^2(\Rbb^n,\gamma_n)$ and nonzero $k\in \Nbb$, we also define the \textit{$k$-th order variance functional} of $f$ by
\begin{equation}\label{eq:k-var-def-Gaussian}
    \Vop^k(f) := \left(\int_0^\infty 2 \left\|\left|\grad^k \Pop_t f\right|\right\|_{\Leb^2}^{2/k} \dif t\right)^{k},
\end{equation}
and further for nonzero $\kappa\in \Nbb^n$, we define the \textit{$\kappa$-partial variance functional} of $f$ by
\begin{equation}\label{eq:kappa-var-def-Gaussian}
    \Vop^{\kappa}(f) := \left(\int_0^\infty 2 \left\|\Dop^\kappa \Pop_t f\right\|_{\Leb^2}^{2/|\kappa|} \dif t\right)^{|\kappa|}.
\end{equation}
For the trivial cases $k=0$ and $\kappa=\ovec$, we similarly set $\Vop^0(f)=\Vop^{\ovec}(f):=\left\|f\right\|_{\Leb^2}^2$. Note that by the standard smoothing property $\Pop_t (\Leb^2(\Rbb^n,\gamma_n))\subseteq \bigcap_{k=1}^\infty \Sob^k(\Rbb^n,\gamma_n)$ for $t>0$, $\Vop^k(f)$ and $\Vop^{\kappa}(f)$ are well-defined here for all $f\in \Leb^2(\Rbb^n,\gamma_n)$, although they may not be finite. We further write $\Vsp^k(\Rbb^n,\gamma_n):=\left\{f\in \Leb^2(\Rbb^n,\gamma_n): \Vop^k(f)<\infty\right\}$.

\begin{remark}
As mentioned in Section~\ref{subsec:outline}, for $k\geq 1$, the high-order variance functional $\Vop^k(f)$ behaves as the squared equivalent Besov $\Besov_{2,2/k}^0$ quasi-seminorm and recovers $\Var(f)$ when $k=1$. To illustrate this Besov nature, we first recall Stein's semigroup characterization of Besov scales \cite{Stein1970topics} (see also \cite{Coulhon2012heat,Cao2022heat} for modern formalizations). An equivalent quasi-norm of the Besov space $\Besov_{p,q}^s$ with regularity parameter $s\in \Rbb$, integrability parameter $p\in (0,\infty]$, and fine-tuning parameter $q\in (0,\infty]$ is given by
\begin{equation*}
    \left\|f\right\|_{\Besov_{p,q}^s} := \left|\Eop f\right| + \left( \int_0^\infty \left( t^{m - \frac{s}{2}} \left\|(-\Lop)^m \Pop_t f\right\|_{\Leb^p} \right)^q \frac{\dif t}{t} \right)^{1/q},
\end{equation*}
where $\Pop_t=\upe^{t\Lop}$ is the associated semigroup of the underlying space, and $m$ is an arbitrarily chosen number satisfying $2m>s$. In both the Boolean and Gaussian settings, we have
\begin{equation*}
    \left\|\left|\grad^k f\right|\right\|_{\Leb^2}^2 = \sum_{m\geq k} \binom{m}{k} \Wop^m(f) \asymp_k \sum_{m\geq k} m^k \Wop^m(f) = \left\|(-\Lop)^{k/2} f^{\geq k}\right\|_{\Leb^2}^2.
\end{equation*}
Hence, matching the parameters $s=0$, $p=2$, and $q=2/k$, we obtain
\begin{equation*}
    \Vop^k(f) \asymp_k \left(\int_0^\infty \left(t^{k/2}\left\|(-\Lop)^{k/2} \Pop_t f^{\geq k}\right\|_{\Leb^2}\right)^{2/k} \frac{\dif t}{t}\right)^{k} = \left\|f^{\geq k}\right\|_{\Besov_{2,2/k}^0}^2,
\end{equation*}
yielding exactly the structural equivalence $\Vsp^k=\Besov_{2,2/k}^0$.
\end{remark}

\subsection{Comparison inequalities and space embeddings}\label{subsec:high-order-variance-compare}

The following results demonstrate that these high-order variance functionals lie exactly between the high-order energies and Fourier weights.

More specifically, in the Boolean setting we have:
\begin{proposition}\label{prop:high-order-var-bound-Boolean}
For all $k\in \Nbb_{\leq n}$ and $f:\cube^n\to \Rbb$,
\begin{equation}\label{eq:high-order-var-bound-Boolean}
    \Wop^{\geq k}(f) \leq \sum_{J\subseteq [n]:|J|=k} \Vop_J(f) \leq \Vop^k(f) \leq \left\|\left|\grad^k f\right|\right\|_{\Leb^2}^2.
\end{equation}
\end{proposition}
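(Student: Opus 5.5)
The chain has three inequalities, and I would prove them from left to right. Throughout I fix $k\geq 1$; the case $k=0$ is trivial, since all four quantities equal $\left\|f\right\|_{\Leb^2}^2$ by convention.

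\textbf{Middle inequality.} I would write $a_J(t):=\left\|\Dop_J \Pop_t f\right\|_{\Leb^2}^2$, so that $\left\|\left|\grad^k \Pop_t f\right|\right\|_{\Leb^2}^{2/k}=\big(\sum_{|J|=k} a_J(t)\big)^{1/k}$. The claim $\sum_J \Vop_J(f)\leq \Vop^k(f)$ then reads
\begin{equation*}
\sum_J \Big\|a_J^{1/k}\Big\|_{\Leb^1(2\dif t)}^{k} \leq \Big\| \big(\textstyle\sum_J a_J\big)^{1/k}\Big\|_{\Leb^1(2\dif t)}^{k}.
\end{equation*}
Setting $g_J:=a_J^{1/k}$, this is the reverse Minkowski inequality $\sum_J \left\|g_J\right\|_{\Leb^1}^k\leq \big\|(\sum_J g_J^k)^{1/k}\big\|_{\Leb^1}^k$. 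Equivalently, $\big\|(\sum_J g_J^k)^{1/k}\big\|_{\Leb^1}\geq \big(\sum_J\left\|g_J\right\|_{\Leb^1}^k\big)^{1/k}$, which is Minkowski's inequality for the $\ell^k$ norm applied to the vector-valued function $(g_J)_J$: the norm of the integral is at most the integral of the norm. I regard this step as routine.

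\textbf{Right inequality.} By Lemma~\ref{lem:derivative-semigroup-decay-Boolean}, $\left\|\left|\grad^k \Pop_t f\right|\right\|_{\Leb^2}^{2/k}\leq \upe^{-2t}\left\|\left|\grad^k f\right|\right\|_{\Leb^2}^{2/k}$. Integrating gives $\int_0^\infty 2\upe^{-2t}\,\dif t=1$, and raising to the power $k$ yields $\Vop^k(f)\leq \left\|\left|\grad^k f\right|\right\|_{\Leb^2}^2$.

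\textbf{Left inequality.} This is the main obstacle, because the exponent $2/k\leq 1$ goes the wrong way for a naive Jensen argument. I would work one $J$ at a time via the Fourier expansion~\eqref{eq:derivative-Fourier-Boolean}:
\begin{equation*}
a_J(t)=\sum_{S\supseteq J} \upe^{-2t|S|}\widehat f(S)^2 .
\end{equation*}
Write $b_J:=\sum_{S\supseteq J}\widehat f(S)^2\,\upe^{-2t(|S|-k)}$, so that $a_J(t)=\upe^{-2kt}b_J(t)$ and $a_J(t)^{1/k}=\upe^{-2t}b_J(t)^{1/k}$. The goal is $\Vop_J(f)\geq w_J:=\sum_{S\supseteq J}\widehat f(S)^2/\binom{|S|}{k}$. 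Summing over $|J|=k$ then gives exactly $\sum_{|S|\geq k}\widehat f(S)^2=\Wop^{\geq k}(f)$, because each $S$ contains $\binom{|S|}{k}$ sets $J$ of size $k$.

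To get $\Vop_J(f)\geq w_J$, I would treat $\Vop_J(f)^{1/k}=\int_0^\infty 2\upe^{-2t} b_J(t)^{1/k}\,\dif t$ as an $\Leb^{1/k}$ quasi-norm of a sum. Reverse Minkowski holds for exponent $1/k\leq 1$: the map $b\mapsto \big(\int b^{1/k}\,\dif\mu\big)^k$ is superadditive on nonnegative functions. This reduces everything to a single Fourier mode $S$ with $|S|=m\geq k$, where I need
\begin{equation*}
\Big(\int_0^\infty 2\upe^{-2t}\upe^{-2t(m-k)/k}\,\dif t\Big)^k=\Big(\frac{k}{m}\Big)^k\geq \binom{m}{k}^{-1}.
\end{equation*}
This holds because $\binom{m}{k}\geq (m/k)^k$. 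Combining the three steps gives the full chain. If the superadditivity step needs justification, I would cite the reverse Minkowski inequality for $\Leb^r$ with $r=1/k\in(0,1]$, or equivalently the concavity of $\Leb^r$ quasi-norms on the positive cone.
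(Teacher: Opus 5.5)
Your proof is correct and follows essentially the same route as the paper. The paper uses energy decay for the upper bound, reverse Minkowski for the $L^{1/k}$ quasi-norm in $t$ for the middle inequality, and reverse Minkowski over the Fourier modes $S\supseteq J$ combined with $\binom{m}{k}\geq (m/k)^k$ for the lower bound. Your differences are only cosmetic: you phrase the middle step as Minkowski's integral inequality in $\ell^k$, absorb $\mathrm{e}^{-2t}$ into the measure, and bound each $\Vop_J(f)$ by $\sum_{S\supseteq J}\widehat f(S)^2/\binom{|S|}{k}$ before summing over $J$.
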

\begin{proof}
When $k=0$, the desired inequality holds trivially:
\begin{equation*}
    \Wop^{\geq 0}(f) = \Vop_\varnothing(f) = \Vop^0(f) =  \left\|\left|\grad^0 f\right|\right\|_{\Leb^2}^2 =\left\|f\right\|_{\Leb^2}^2.
\end{equation*}
Assume $k\geq 1$. The energy-decay estimate \eqref{eq:derivative-semigroup-decay-Boolean} yields
\begin{equation*}
    \Vop^k(f) = \left(\int_0^\infty 2 \left\|\left|\grad^k \Pop_t f\right|\right\|_{\Leb^2}^{2/k} \dif t\right)^{k} \leq \left(\int_0^\infty 2\, \upe^{- 2t} \left\|\left|\grad^k f\right|\right\|_{\Leb^2}^{2/k} \dif t\right)^{k} = \left\|\left|\grad^k f\right|\right\|_{\Leb^2}^{2},
\end{equation*}
which proves the energy upper bound. On the other hand, decomposing
\begin{equation*}
    \left\|\left|\grad^k \Pop_t f\right|\right\|_{\Leb^2}^{2} = \sum_{J\subseteq [n]:|J|=k}  \left\|\Dop_J \Pop_t f\right\|_{\Leb^2}^{2},
\end{equation*}
the reverse Minkowski inequality of the $\Leb^{1/k}$ quasi-norm (w.r.t. integration in $t$) then yields
\begin{align*}
    \Vop^k(f) &= \left(\int_0^\infty 2 \left\|\left|\grad^k \Pop_t f\right|\right\|_{\Leb^2}^{2/k} \dif t\right)^{k}\\
    &\geq \sum_{J\subseteq [n]:|J|=k} \left(\int_0^\infty 2 \left\|\Dop_J \Pop_t f\right\|_{\Leb^2}^{2/k} \dif t\right)^{k}  = \sum_{J\subseteq [n]:|J|=k} \Vop_J(f).
\end{align*}
Further invoking the Fourier--Walsh expansion
\begin{equation*}
    \left\|\Dop_J \Pop_t f\right\|_{\Leb^2}^{2} = \sum_{S\subseteq [n]:S\supseteq J} \upe^{-2t|S|}\widehat{f}(S)^2,
\end{equation*}
the reverse Minkowski inequality similarly yields
\begin{align*}
    \Vop_J(f) &= \left(\int_0^\infty 2 \left\|\Dop_J \Pop_t f\right\|_{\Leb^2}^{2/|J|} \dif t\right)^{|J|} \\
    &\geq \sum_{S\subseteq [n]:S\supseteq J} \left(\int_0^\infty 2\, \upe^{-2t|S|/|J|} \dif t\right)^{|J|} \widehat{f}(S)^2 = \sum_{S\subseteq [n]:S\supseteq J} \left(\frac{|J|}{|S|}\right)^{|J|} \widehat{f}(S)^2.
\end{align*}
Summing over all subsets $J\subseteq [n]$ with $|J|=k$, we obtain
\begin{align*}
    \sum_{J\subseteq [n]:|J|=k} \Vop_J(f) &\geq \sum_{J\subseteq [n]:|J|=k} \sum_{S\subseteq [n]:S\supseteq J} \left(\frac{|J|}{|S|}\right)^{|J|} \widehat{f}(S)^2\\
    &= \sum_{m=k}^n \left(\frac{k}{m}\right)^{k} \sum_{S\subseteq [n]:|S|=m} \sum_{J\subseteq S:|J|=k}  \widehat{f}(S)^2\\
    &= \sum_{m=k}^n \left(\frac{k}{m}\right)^{k} \binom{m}{k}\sum_{S\subseteq [n]:|S|=m} \widehat{f}(S)^2 \geq  \Wop^{\geq k}(f).
\end{align*}
where we used the standard bound $\binom{m}{k} \geq \left(\frac{m}{k}\right)^{k}$ for $m\geq k$. This completes the proof.
\end{proof}

In the Gaussian setting, we similarly have:
\begin{proposition}\label{prop:high-order-var-bound-Gaussian}
For all $k\in \Nbb$ and $f\in \Sob^k(\Rbb^n,\gamma_n)$,
\begin{equation}\label{eq:high-order-var-bound-Gaussian}
    \Wop^{\geq k}(f) \leq \sum_{\kappa\in \Nbb^n:|\kappa|=k} \Vop^\kappa(f)  \leq \Vop^k(f) \leq \left\|\left|\grad^k f\right|\right\|_{\Leb^2}^2.
\end{equation}
\end{proposition}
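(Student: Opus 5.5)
We follow the proof of Proposition~\ref{prop:high-order-var-bound-Boolean} verbatim, replacing the Walsh expansion by the Fourier--Hermite expansion and subsets $J$ by multi-indices $\kappa$. The case $k=0$ is trivial, since all four quantities equal $\left\|f\right\|_{\Leb^2}^2$. So assume $k\geq 1$. We prove the three inequalities from right to left.

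For the energy upper bound, we use the Gaussian energy-decay estimate \eqref{eq:derivative-semigroup-decay-Gaussian} with $p=2$, which gives $\left\|\left|\grad^k \Pop_t f\right|\right\|_{\Leb^2}^{2/k}\leq \upe^{-2t}\left\|\left|\grad^k f\right|\right\|_{\Leb^2}^{2/k}$. Since $\int_0^\infty 2\upe^{-2t}\dif t=1$, raising the integral to the power $k$ yields $\Vop^k(f)\leq \left\|\left|\grad^k f\right|\right\|_{\Leb^2}^2$. This is finite because $f\in\Sob^k$.

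For the middle inequality, write $\left\|\left|\grad^k \Pop_t f\right|\right\|_{\Leb^2}^2=\sum_{|\kappa|=k}\left\|\Dop^\kappa\Pop_t f\right\|_{\Leb^2}^2$. Set $g_\kappa(t):=\left\|\Dop^\kappa\Pop_t f\right\|_{\Leb^2}^{2/k}$, so that $\left\|\left|\grad^k \Pop_t f\right|\right\|_{\Leb^2}^{2/k}=\left(\sum_\kappa g_\kappa(t)^k\right)^{1/k}$. The reverse Minkowski inequality for the $\Leb^{1/k}$ quasi-norm in $t$ (superadditivity of $F\mapsto(\int F^{1/k})^k$ for nonnegative $F$) gives
\begin{equation*}
\Vop^k(f)\geq \sum_{|\kappa|=k}\Vop^\kappa(f).
\end{equation*}
The sum over $\kappa$ is countable. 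This causes no difficulty: either apply the inequality to finite partial sums and pass to the limit by monotone convergence, or note that $f\in\Sob^k$ makes every term finite.

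For the Fourier lower bound, \eqref{eq:derivative-Fourier-Gaussian} and \eqref{eq:semigroup-Fourier-Gaussian} give
\begin{equation*}
\left\|\Dop^\kappa\Pop_t f\right\|_{\Leb^2}^2=\sum_{\alpha\geq\kappa}\binom{\alpha}{\kappa}\upe^{-2t|\alpha|}\widehat f(\alpha)^2.
\end{equation*}
Applying reverse Minkowski again, now with the countable sum over $\alpha$, gives
\begin{equation*}
\Vop^\kappa(f)\geq\sum_{\alpha\geq\kappa}\left(\frac{k}{|\alpha|}\right)^k\binom{\alpha}{\kappa}\widehat f(\alpha)^2.
\end{equation*}
Summing over $|\kappa|=k$ and exchanging the order of summation (all terms are nonnegative), we use the Vandermonde identity $\sum_{\kappa\leq\alpha,\,|\kappa|=k}\binom{\alpha}{\kappa}=\binom{|\alpha|}{k}$. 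Combined with $\binom{m}{k}\geq (m/k)^k$ for $m\geq k$, this bounds the total from below by $\sum_{|\alpha|\geq k}\widehat f(\alpha)^2=\Wop^{\geq k}(f)$.

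The Vandermonde identity is the only genuinely new ingredient compared to the Boolean case; there the count was simply the number of $k$-subsets of $S$. The main technical point is justifying reverse Minkowski for infinite sums, which is handled by the monotone limit argument above.
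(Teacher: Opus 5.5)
Your proof is correct and follows the paper's argument essentially step for step: the energy-decay bound gives the upper inequality, reverse Minkowski in $\Leb^{1/k}(\dif t)$ is applied twice, and Vandermonde's identity together with $\binom{m}{k}\geq (m/k)^k$ gives the Fourier lower bound. One small correction: the sum over $\kappa\in\Nbb^n$ with $|\kappa|=k$ is finite, with $\binom{n+k-1}{k}$ terms, not countably infinite. So only the sum over $\alpha$ needs your partial-sum limiting argument, a step the paper leaves implicit.
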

\begin{proof}
When $k=0$, the desired inequality is also trivial:
\begin{equation*}
    \Wop^{\geq 0}(f) = \Vop^{\ovec}(f) = \Vop^0(f) =  \left\|\left|\grad^0 f\right|\right\|_{\Leb^2}^2 =\left\|f\right\|_{\Leb^2}^2.
\end{equation*}
Let $k\geq 1$. The energy upper bound follows analogously from the energy-decay estimate \eqref{eq:derivative-semigroup-decay-Gaussian}. For the lower bound, the reverse Minkowski inequality together with the expansions
\begin{equation*}
    \left\|\left|\grad^k \Pop_t f\right|\right\|_{\Leb^2}^{2} = \sum_{\kappa\in \Nbb^n:|\kappa|=k} \left\|\Dop^\kappa \Pop_t f\right\|_{\Leb^2}^{2}, \ \  \left\|\Dop^\kappa \Pop_t f\right\|_{\Leb^2}^{2} = \sum_{\alpha\in \Nbb^n:\alpha\geq \kappa} \upe^{-2t|\alpha|}\binom{\alpha}{\kappa}  \widehat{f}(\alpha)^2,
\end{equation*}
similarly yields
\begin{equation*}
    \Vop^k(f) \geq \sum_{\kappa\in \Nbb^n:|\kappa|=k} \left(\int_0^\infty 2 \left\|\Dop^\kappa \Pop_t f\right\|_{\Leb^2}^{2/k} \dif t\right)^{k}  = \sum_{\kappa\in \Nbb^n:|\kappa|=k} \Vop^\kappa(f),
\end{equation*}
and further
\begin{align*}
    \sum_{\kappa\in \Nbb^n:|\kappa|=k} \Vop^\kappa(f) &\geq \sum_{\kappa\in \Nbb^n:|\kappa|=k}\sum_{\alpha\in \Nbb^n:\alpha\geq \kappa} \left(\int_0^\infty 2\, \upe^{-2t|\alpha|/|\kappa|} \dif t\right)^{|\kappa|} \binom{\alpha}{\kappa}  \widehat{f}(\alpha)^2\\
    &\geq \sum_{\kappa\in \Nbb^n:|\kappa|=k}\sum_{\alpha\in \Nbb^n:\alpha\geq \kappa} \left(\frac{|\kappa|}{|\alpha|}\right)^{|\kappa|} \binom{\alpha}{\kappa} \widehat{f}(\alpha)^2\\
    &= \sum_{m=k}^\infty \left(\frac{k}{m}\right)^{k} \sum_{\alpha\in \Nbb^n:|\alpha|=m} \sum_{\kappa\leq \alpha:|\kappa|=k}  \binom{\alpha}{\kappa} \widehat{f}(\alpha)^2\\
    &= \sum_{m=k}^\infty \left(\frac{k}{m}\right)^{k} \binom{m}{k} \sum_{\alpha\in \Nbb^n:|\alpha|=m} \widehat{f}(\alpha)^2 \geq \Wop^{\geq k}(f),
\end{align*}
where we invoked Vandermonde's convolution identity $\sum_{\kappa\leq \alpha:|\kappa|=k}  \binom{\alpha}{\kappa} = \binom{m}{k}$ for $|\alpha|=m$, alongside the standard bound $\binom{m}{k} \geq \left(\frac{m}{k}\right)^{k}$ for $m\geq k$. This completes the proof.
\end{proof}

\begin{remark}
While the Besov-type nature of the high-order variance functionals and the space embedding $\Sob^k\subseteq \Besov^0_{2,2/k} = \Vsp^k \subseteq \Leb^2$ already provide a qualitative picture
\begin{equation*}
    \Wop^{\geq k}(f) = \left\|f^{\geq k}\right\|_{\Leb^2}^2 \lesssim_k \left\|f^{\geq k}\right\|_{\Besov^0_{2,2/k}}^2 \asymp_k \Vop^k(f) \lesssim_k \left\|f^{\geq k}\right\|_{\Sob^k}^2 \asymp_k \left\|\left|\grad^k f\right|\right\|_{\Leb^2}^2,
\end{equation*}
Propositions~\ref{prop:high-order-var-bound-Boolean} and \ref{prop:high-order-var-bound-Gaussian} strengthen this relation into precise, non-asymptotic inequalities and reveal exactly how the total high-order variances decompose into partial ones.
\end{remark}

\subsection{Dynamical properties under semigroup evolution}\label{subsec:high-order-var-decay}

Next, we establish characterizations for the dynamical behaviors of the high-order variance functionals under the semigroup evolution.

As a standard result, the canonical variance possesses logarithmic convexity along the trajectories of symmetric Markov semigroups (see, e.g., \cite[Lemma 4.2.6]{BGL2014Markov}). The following results demonstrate that high-order extensions naturally inherit this structural property.

More specifically, in the Boolean setting, we have:
\begin{proposition}\label{prop:var-log-convex-Boolean}
Let $k\in \Nbb_{\leq n}$, $J\subseteq [n]$, and $f:\cube^n\to \Rbb$. Then both the mappings $t\mapsto \Vop^k(\Pop_t f)$ and $t\mapsto \Vop_J(\Pop_t f)$ are logarithmically convex on $[0,+\infty)$.
\end{proposition}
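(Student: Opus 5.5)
The plan is to write both functionals as tail integrals and then use two facts: log-convexity survives positive powers, and it survives positive superposition (sums and integrals).

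\emph{Step 1 (reduce to a tail integral).} By the semigroup property $\Pop_s\Pop_t=\Pop_{s+t}$,
\begin{equation*}
    \Vop^k(\Pop_t f) = \left(\int_0^\infty 2\, h(t+s)\,\dif s\right)^{k}, \qquad h(s):=\left\|\left|\grad^k \Pop_s f\right|\right\|_{\Leb^2}^{2/k},
\end{equation*}
and $\Vop_J(\Pop_t f)$ has the same form with $h_J(s):=\left\|\Dop_J\Pop_s f\right\|_{\Leb^2}^{2/|J|}$. The trivial cases $k=0$ and $J=\varnothing$ give $\left\|\Pop_t f\right\|_{\Leb^2}^2=\sum_S \upe^{-2t|S|}\widehat f(S)^2$. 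This is log-convex by the argument of Step 2, so I will assume $k\geq 1$ and $J\neq\varnothing$ from now on.

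\emph{Step 2 (log-convexity of the integrand).} By the Fourier--Walsh expansions \eqref{eq:derivative-Fourier-Boolean} and \eqref{eq:semigroup-Fourier-Boolean},
\begin{equation*}
    g(s):=\left\|\left|\grad^k\Pop_s f\right|\right\|_{\Leb^2}^2=\sum_{S:|S|\geq k}\binom{|S|}{k}\upe^{-2s|S|}\widehat f(S)^2 .
\end{equation*}
Similarly, $\left\|\Dop_J\Pop_s f\right\|_{\Leb^2}^2=\sum_{S\supseteq J}\upe^{-2s|S|}\widehat f(S)^2$. Each is a nonnegative combination of exponentials $s\mapsto \upe^{-2s m}$, and each such exponential is log-affine.

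I will use the standard lemma that a finite sum, or more generally an integral against a positive measure, of log-convex functions is log-convex. This follows from Hölder's inequality: if $\phi_i(\lambda a+(1-\lambda)b)\leq \phi_i(a)^\lambda\phi_i(b)^{1-\lambda}$ for each $i$, then summing and applying Hölder with exponents $1/\lambda$ and $1/(1-\lambda)$ gives
\begin{equation*}
    \sum_i \phi_i(a)^\lambda\phi_i(b)^{1-\lambda}\leq \Big(\sum_i\phi_i(a)\Big)^\lambda\Big(\sum_i\phi_i(b)\Big)^{1-\lambda}.
\end{equation*}
Hence $g$ is log-convex. Since $\log h=\frac1k\log g$, the function $h$ is log-convex as well, and the same holds for $h_J$.

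\emph{Step 3 (integrate and take the power).} For each fixed $s\geq 0$, the map $t\mapsto h(t+s)$ is a translate of $h$ and therefore log-convex. The integral lemma of Step 2, applied in the variable $s$, shows that $F(t):=\int_0^\infty 2h(t+s)\,\dif s$ is log-convex. This integral is finite because $h(s)\lesssim \upe^{-2s}$ by \eqref{eq:derivative-semigroup-decay-Boolean}, and $F(t)$ is finite on the whole Boolean cube setting since $f$ has finitely many Fourier coefficients. Finally $\log F^k=k\log F$, so $t\mapsto\Vop^k(\Pop_t f)$ is log-convex, and the identical argument applies to $\Vop_J$.

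\emph{Main obstacle.} The only delicate point is the degenerate case in which $g\equiv 0$ (for instance when $\Wop^{\geq k}(f)=0$). Then the functional vanishes identically and the statement should be read with the convention that the zero function is log-convex, i.e.\ $\log 0=-\infty$. Otherwise $g>0$ everywhere, since it is a nonzero sum of strictly positive exponentials, so $\log$ is well defined throughout. Beyond this degenerate case the argument is straightforward, because the Hölder superposition lemma does all the work.
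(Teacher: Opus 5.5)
Your proposal is correct and is essentially the paper's proof: log-convexity of $s\mapsto \left\|\left|\grad^k \Pop_s f\right|\right\|_{\Leb^2}^{2}$ (resp.\ $\left\|\Dop_J \Pop_s f\right\|_{\Leb^2}^{2}$) comes from its expansion as a nonnegative combination of exponentials, then H\"older's inequality is applied to the $s$-integral of the translated integrands, and finally one raises to the power $k$ (resp.\ $|J|$). Your remark on the degenerate case is harmless, since the multiplicative form $\Vop^k(\Pop_{t_\lambda} f)\leq \Vop^k(\Pop_{t_1} f)^{\lambda}\,\Vop^k(\Pop_{t_2} f)^{1-\lambda}$ used in the paper covers it automatically.
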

\begin{proof}
First, the mapping $t\mapsto \left\|\left|\grad^k \Pop_t f\right|\right\|_{\Leb^2}^2$ is logarithmically convex on $[0,+\infty)$, as it is a positive linear combination of exponential functions:
\begin{equation*}
    \left\|\left|\grad^k \Pop_t f\right|\right\|_{\Leb^2}^2 = \sum_{m=k}^n \upe^{-2mt} \binom{m}{k} \Wop^m(f).
\end{equation*}
When $k=0$, the assertion is trivial since $\Vop^0(\Pop_t f)=\left\|\Pop_t f\right\|_{\Leb^2}^2 = \left\|\left|\grad^0 \Pop_t f\right|\right\|_{\Leb^2}^2$. For $k\geq 1$, let $\lambda\in [0,1]$ and $t_1,t_2\geq 0$, and write $t_\lambda := \lambda t_1 + (1-\lambda)t_2$. Then the logarithmic convexity of $t\mapsto \left\|\left|\grad^k \Pop_t f\right|\right\|_{\Leb^2}^2$ combined with the H\"older inequality yields
\begin{align*}
    \Vop^k(\Pop_{t_\lambda} f) &= \left(\int_0^\infty 2 \left\|\left|\grad^k \Pop_{t+ t_\lambda} f\right|\right\|_{\Leb^2}^{2/k} \dif t\right)^{k}\\
    &\leq \left(\int_0^\infty 2 \left(\left\|\left|\grad^k \Pop_{t+t_1} f\right|\right\|_{\Leb^2}^{2/k}\right)^\lambda  \left(\left\|\left|\grad^k \Pop_{t+t_2} f\right|\right\|_{\Leb^2}^{2/k} \right)^{1-\lambda}\dif t \right)^{k}\\
    &\leq \left(\int_0^\infty 2 \left\|\left|\grad^k \Pop_{t+t_1} f\right|\right\|_{\Leb^2}^{2/k}\dif t \right)^{k \lambda} \left(\int_0^\infty 2 \left\|\left|\grad^k \Pop_{t+t_2} f\right|\right\|_{\Leb^2}^{2/k} \dif t \right)^{k\left(1-\lambda\right)}\\
    &= \Vop^k(\Pop_{t_1} f)^\lambda \Vop^k(\Pop_{t_2} f)^{1-\lambda},
\end{align*}
which proves the logarithmic convexity of $t\mapsto \Vop^k(\Pop_t f)$. Similarly, by expressing
\begin{equation*}
    \left\|\Dop_J \Pop_t f\right\|_{\Leb^2}^2 = \sum_{S\subseteq [n]:S\supseteq J} \upe^{-2t|S|}\widehat{f}(S)^2,
\end{equation*}
the mapping $t\mapsto \left\|\Dop_J \Pop_t f\right\|_{\Leb^2}^2$ is also logarithmically convex, and an identical argument yields the logarithmic convexity of $t\mapsto \Vop_J(\Pop_t f)$.
\end{proof}

In the Gaussian setting, we similarly have:
\begin{proposition}\label{prop:var-log-convex-Gaussian}
Let $k\in \Nbb$, $\kappa\in \Nbb^n$ with $|\kappa|=k$, and $f\in\Vsp^k(\Rbb^n,\gamma_n)$. Then both the mappings $t\mapsto \Vop^k(\Pop_t f)$ and $t\mapsto \Vop^\kappa(\Pop_t f)$ are logarithmically convex on $[0,+\infty)$.
\end{proposition}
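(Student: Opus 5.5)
The plan mirrors the proof of Proposition~\ref{prop:var-log-convex-Boolean}, with the finite sums over Walsh levels replaced by (possibly infinite) Hermite series. I also need to handle the possibility that some quantities are infinite or vanish.

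\emph{Step 1: log-convexity of the energies.} By the Fourier--Hermite expansions \eqref{eq:derivative-Fourier-Gaussian} and \eqref{eq:semigroup-Fourier-Gaussian},
\begin{equation*}
    \left\|\left|\grad^k \Pop_t f\right|\right\|_{\Leb^2}^2 = \sum_{m\geq k} \upe^{-2mt}\binom{m}{k}\Wop^m(f),\qquad \left\|\Dop^\kappa \Pop_t f\right\|_{\Leb^2}^2 = \sum_{\alpha\geq \kappa} \upe^{-2t|\alpha|}\binom{\alpha}{\kappa}\widehat{f}(\alpha)^2.
\end{equation*}
For $t>0$ both series converge, since $f\in\Leb^2$ and $\binom{m}{k}\upe^{-2mt}$ is bounded in $m$. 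Each is therefore a nonnegative combination of exponentials in $t$. By Cauchy--Schwarz (H\"older) applied termwise, such a sum $g$ satisfies $g(t_\lambda)\leq g(t_1)^\lambda g(t_2)^{1-\lambda}$. This remains true at $t=0$, where the value may be $+\infty$; the inequality is then read in $[0,\infty]$, and the infinite case arises only if $f\notin\Sob^k$. The case $k=0$ is again trivial.

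\emph{Step 2: log-convexity of the functionals.} For $k\geq 1$, use the semigroup property $\Pop_s\Pop_t=\Pop_{s+t}$ to write $\Vop^k(\Pop_{t_\lambda} f)$ as $\bigl(\int_0^\infty 2\,g(s+t_\lambda)^{1/k}\,\dif s\bigr)^k$. Then:
\begin{itemize}
    \item insert the pointwise bound from Step~1 inside the integral;
    \item apply H\"older's inequality with exponents $1/\lambda$ and $1/(1-\lambda)$ in $s$;
    \item raise the result to the power $k$.
\end{itemize}
This gives $\Vop^k(\Pop_{t_\lambda}f)\leq \Vop^k(\Pop_{t_1}f)^\lambda\Vop^k(\Pop_{t_2}f)^{1-\lambda}$. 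The same computation handles $\Vop^\kappa$.

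\emph{Step 3: technical points.} These are the main obstacle, and the only place the Gaussian case differs from the Boolean one.
\begin{itemize}
    \item Finiteness. The hypothesis $f\in\Vsp^k$ gives $\Vop^k(f)<\infty$. Since $g(s+t)\leq \upe^{-2kt}g(s)$, each $\Vop^k(\Pop_t f)\leq \upe^{-2t}\Vop^k(f)$ is finite, so log-convexity is a statement about finite, nonnegative values. For $\Vop^\kappa$, finiteness follows from $\Vop^\kappa\leq\Vop^k$, using the reverse Minkowski step in Proposition~\ref{prop:high-order-var-bound-Gaussian}; that argument did not need $f\in\Sob^k$ for that inequality.
    \item Vanishing. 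If some value is zero, then $f^{\geq k}=0$ (respectively, $\widehat f(\alpha)=0$ for all $\alpha\geq\kappa$). The function is then identically zero and trivially log-convex; I adopt the convention that identically zero functions count as log-convex.
    \item Measurability and the Hermite identities. These hold because $\Pop_t f\in\bigcap_k\Sob^k$ for $t>0$. The value at $t=0$ involves the integral only over $s>0$, so no assumption $f\in\Sob^k$ is needed.
\end{itemize}
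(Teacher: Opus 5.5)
Your proposal is correct and follows the paper's proof. Both arguments write the two energies as nonnegative combinations of exponentials in $t$, use this log-convexity pointwise inside the time integral, and apply H\"older in $s$ with exponents $1/\lambda$ and $1/(1-\lambda)$, exactly as in the Boolean case; your Step 3 makes explicit the finiteness and degenerate cases the paper leaves implicit (the decay bound there is in fact $\Vop^k(\Pop_t f)\leq \upe^{-2kt}\Vop^k(f)$, which implies the weaker one you state).
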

\begin{proof}
Similarly expressing
\begin{equation*}
    \left\|\left|\grad^k \Pop_t f\right|\right\|_{\Leb^2}^2 = \sum_{m= k}^\infty \upe^{-2mt} \binom{m}{k} \Wop^m(f),\ \ \left\|\Dop^\kappa \Pop_t f\right\|_{\Leb^2}^2 = \sum_{\alpha\in \Nbb^n:\alpha\geq \kappa} \upe^{-2t|\alpha|} \binom{\alpha}{\kappa} \widehat{f}(\alpha)^2,
\end{equation*}
as positive linear combinations of exponential functions, the mappings $t\mapsto \left\|\left|\grad^k \Pop_t f\right|\right\|_{\Leb^2}^2$ and $t\mapsto \left\|\Dop^\kappa \Pop_t f\right\|_{\Leb^2}^2$ are both logarithmically convex, yielding the logarithmic convexity of $t\mapsto \Vop^k(\Pop_t f)$ and $t\mapsto \Vop^\kappa(\Pop_t f)$ in an identical manner to Proposition~\ref{prop:var-log-convex-Boolean}.
\end{proof}

\begin{remark}
In particular, the logarithmic convexity of these high-order variance functionals yields a useful endpoint localization effect: a short-time decay estimate of the form \eqref{eq:high-order-var-decay-abstract} holds for all $t\in [0,\epsilon]$ if and only if it is satisfied at the endpoint $t=\epsilon$.
\end{remark}

We conclude this section by deriving sharp decay estimates of these high-order variance functionals along the trajectories via hypercontractivity.

In the Boolean setting, we prove the following:
\begin{lemma}\label{lem:var-decay-Boolean}
For all $k\in \Nbb_{\leq n}$, $f:\cube^n\to \Rbb$, $q\in [1,2)$ and $t\geq 0$,
\begin{equation}\label{eq:var-decay-Boolean}
    \Vop^k(\Pop_t f) \leq \upe^{-2kt} \Vop^k(f)^{1-\vartheta} \min\left\{\Vop^k(f),\,\left\|\left|\grad^k f\right|\right\|_{\Leb^q}^{2},\, \sum_{J\subseteq[n]:|J|=k}\left\|\Dop_J f\right\|_{\Leb^q}^2\right\}^{\vartheta},
\end{equation}
and similarly, for all $J\subseteq [n]$,
\begin{equation}\label{eq:partial-var-decay-Boolean}
    \Vop_J(\Pop_t f) \leq \upe^{-2t |J|} \Vop_J(f)^{1-\vartheta} \min\left\{\Vop_J(f),\,\left\|\Dop_J f\right\|_{\Leb^q}^2\right\}^{\vartheta},
\end{equation}
where $\vartheta=\vartheta(q,t) \triangleq \min\left\{\frac{q}{2-q}\tanh(t), 1\right\}$.
\end{lemma}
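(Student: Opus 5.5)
The plan is to write the evolved functional as a tail integral, $\Vop^k(\Pop_t f)=\big(\int_0^\infty 2\,\|\,|\grad^k \Pop_{s+t} f|\,\|_{\Leb^2}^{2/k}\dif s\big)^k$. I would bound the integrand pointwise in $s$ using hypercontractivity over the extra time $t$, and then recombine the $s$-integral with H\"older's inequality.

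\emph{Step 1 (trivial branch).} By the energy-decay estimate \eqref{eq:derivative-semigroup-decay-Boolean}, $\|\,|\grad^k\Pop_{s+t}f|\,\|_{\Leb^2}\le \upe^{-kt}\|\,|\grad^k\Pop_s f|\,\|_{\Leb^2}$. Integrating this gives $\Vop^k(\Pop_t f)\le \upe^{-2kt}\Vop^k(f)$, which is the $\Vop^k(f)$ entry of the minimum (equivalently, $\vartheta=0$). Since the right-hand side of \eqref{eq:var-decay-Boolean} has the form $\upe^{-2kt}A^{1-\vartheta}\min\{A,X,Y\}^\vartheta$ with $A=\Vop^k(f)$, it suffices to prove the bound separately with $X=\|\,|\grad^k f|\,\|_{\Leb^q}^2$ and with $Y=\sum_J\|\Dop_J f\|_{\Leb^q}^2$ in place of the minimum.

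\emph{Step 2 (hypercontractive pointwise bound).} Set $r=1+\upe^{-2t}$. For the $Y$-version I would use the commutation law \eqref{eq:J-derivative-semigroup-decay-Boolean}, which gives $\Dop_J\Pop_{s+t}f=\upe^{-kt}\Pop_t\Dop_J\Pop_s f$. Proposition~\ref{prop:hypercontractivity-Boolean} then yields
\[
\|\Dop_J\Pop_{s+t}f\|_{\Leb^2}\le \upe^{-kt}\|\Dop_J\Pop_s f\|_{\Leb^r}.
\]
Next I interpolate $\Leb^r$ between $\Leb^q$ and $\Leb^2$: $\|g\|_{\Leb^r}^2\le \|g\|_{\Leb^2}^{2(1-\vartheta)}\|g\|_{\Leb^q}^{2\vartheta}$, where $\vartheta$ is determined by $\frac1r=\frac{\vartheta}{q}+\frac{1-\vartheta}{2}$. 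Solving gives $\vartheta=\frac{q}{2-q}\cdot\frac{2-r}{r}=\frac{q}{2-q}\tanh t$. If this exceeds $1$, then $r\le q$ and I simply use $\|g\|_{\Leb^r}\le\|g\|_{\Leb^q}$, which is the case $\vartheta=1$. The $\Leb^q$ factor is controlled by the $\Leb^q$-decay $\|\Dop_J\Pop_s f\|_{\Leb^q}\le \upe^{-ks}\|\Dop_J f\|_{\Leb^q}$. Summing over $J$ with the discrete H\"older inequality (exponents $\frac1{1-\vartheta},\frac1\vartheta$) gives
\[
\|\,|\grad^k\Pop_{s+t}f|\,\|_{\Leb^2}^2\le \upe^{-2kt}\,\|\,|\grad^k\Pop_s f|\,\|_{\Leb^2}^{2(1-\vartheta)}\,\big(\upe^{-2ks}Y\big)^{\vartheta}.
\]
For the $X$-version I would argue the same way, but starting from the pointwise bound $|\grad^k\Pop_{s+t}f|\le \upe^{-kt}\Pop_t|\grad^k\Pop_s f|$ of Lemma~\ref{lem:derivative-semigroup-decay-Boolean}. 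Hypercontractivity and the same interpolation are then applied to the scalar function $|\grad^k\Pop_s f|$, and \eqref{eq:derivative-semigroup-decay-Boolean} controls its $\Leb^q$ norm by $\upe^{-ks}\|\,|\grad^k f|\,\|_{\Leb^q}$.

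\emph{Step 3 (integrate in $s$).} I raise the bound of Step~2 to the power $1/k$ and apply H\"older in $s$ against the measure $2\dif s$, with exponents $\frac1{1-\vartheta}$ and $\frac1\vartheta$. This gives
\[
\int_0^\infty 2\,\|\,|\grad^k\Pop_{s+t}f|\,\|_{\Leb^2}^{2/k}\dif s\le \upe^{-2t}\Big(\int_0^\infty 2\,\|\,|\grad^k\Pop_s f|\,\|_{\Leb^2}^{2/k}\dif s\Big)^{1-\vartheta}\Big(\int_0^\infty 2\upe^{-2s}\dif s\Big)^{\vartheta}Y^{\vartheta/k},
\]
and the middle integral equals $1$. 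Raising both sides to the power $k$ gives \eqref{eq:var-decay-Boolean}.

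The partial estimate \eqref{eq:partial-var-decay-Boolean} follows from the same three steps for a single $J$ with power $2/|J|$, and no sum over $J$ is needed. The main point requiring care is the exponent bookkeeping in Step~3. The $\upe^{-2ks\vartheta}$ factor produced by the $\Leb^q$ decay is exactly what makes the $s$-integral on the $\vartheta$ side equal to $1$, and this only works because the power $2/k$ in the Besov-type definition rescales $\upe^{-2ks}$ to $\upe^{-2s}$. I expect this, together with the exact identification $\frac{2-r}{r}=\tanh t$, to be the step that needs the closest verification.
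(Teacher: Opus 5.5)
Your proposal is correct and follows essentially the same route as the paper. You extract $\upe^{-kt}$ through the commutation law (pointwise for $|\grad^k\cdot|$, exactly for each $\Dop_J$), apply hypercontractivity with $r=1+\upe^{-2t}$ and the $\Leb^q$--$\Leb^2$ interpolation giving $\vartheta=\frac{q}{2-q}\tanh t$, and recombine by H\"older in $s$, and also in $J$ for the $\sum_J\|\Dop_J f\|_{\Leb^q}^2$ branch, which the paper leaves implicit. The remaining differences are cosmetic: the paper obtains the $\Vop^k(f)$ entry from $\|\cdot\|_{\Leb^q}\le\|\cdot\|_{\Leb^2}$ inside the same H\"older bound, and applies the $\Leb^q$ energy decay after integrating in $s$ rather than pointwise. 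The degenerate cases $k=0$ and $J=\varnothing$, where raising to the power $1/k$ is unavailable, follow directly from your Step~2 at $s=0$.
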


\begin{proof}
Assume w.l.o.g. $k\geq 1$. By the definition of $\Vop^k$ and the bound \eqref{eq:k-derivative-semigroup-decay-Boolean} in Lemma~\ref{lem:derivative-semigroup-decay-Boolean},
\begin{equation*}
    \Vop^k(\Pop_t f)^{1/k} = \int_0^\infty 2 \left\|\left|\grad^k \Pop_{t+s} f\right|\right\|_{\Leb^2}^{2/k} \dif s \leq \upe^{-2t}\int_0^\infty 2 \left\|\Pop_t\left|\grad^k \Pop_{s} f\right|\right\|_{\Leb^2}^{2/k} \dif s.
\end{equation*}
Hypercontractivity (Proposition \ref{prop:hypercontractivity-Boolean}) combined with the logarithmic convexity of $\Leb^p$ norms (Lemma~\ref{lem:norm-log-convex})  then yields
\begin{equation*}
    \left\|\Pop_t\left|\grad^k \Pop_{s} f\right|\right\|_{\Leb^2} \leq \left\|\left|\grad^k \Pop_{s} f\right|\right\|_{\Leb^{p}} \leq \left\| \left|\grad^k \Pop_{s} f\right|\right\|_{\Leb^{q}}^{\vartheta} \left\|\left|\grad^k \Pop_{s} f\right|\right\|_{\Leb^{2}}^{1-\vartheta}
\end{equation*}
with $p=1+\upe^{-2t}$ and $\vartheta=\min\left\{\frac{1/p-1/2}{1/q-1/2},1\right\}$. By the H\"older inequality,
\begin{align*}
    &\int_0^\infty 2 \left\|\left|\grad^k \Pop_{s} f\right|\right\|_{\Leb^{q}}^{2\vartheta/k} \left\| \left|\grad^k \Pop_{s} f\right|\right\|_{\Leb^{2}}^{2(1-\vartheta)/k} \dif s \\
    &\hspace{2cm}\leq \left(\int_0^\infty 2 \left\|\left|\grad^k \Pop_{s} f\right|\right\|_{\Leb^2}^{2/k} \dif s \right)^{1-\vartheta} \left(\int_0^\infty 2 \left\|\left|\grad^k \Pop_{s} f\right|\right\|_{\Leb^q}^{2/k} \dif s\right)^{\vartheta}.
\end{align*}
The first term recovers $\Vop^k$:
\begin{equation*}
    \int_0^\infty 2 \left\|\left|\grad^k \Pop_{s} f\right|\right\|_{\Leb^2}^{2/k} \dif s = \Vop^k(f)^{1/k}.
\end{equation*}
The second term can be bounded by $\Vop^k$ via the monotonicity of $\Leb^q$ norms:
\begin{equation*}
    \int_0^\infty 2 \left\|\left|\grad^k \Pop_{s} f\right|\right\|_{\Leb^q}^{2/k} \dif s\leq\int_0^\infty 2 \left\|\left|\grad^k \Pop_{s} f\right|\right\|_{\Leb^2}^{2/k} \dif s = \Vop^k(f)^{1/k},
\end{equation*}
and also via the energy-decay estimate \eqref{eq:derivative-semigroup-decay-Boolean} due to Lemma~\ref{lem:derivative-semigroup-decay-Boolean}:
\begin{equation*}
    \int_0^\infty 2 \left\|\left|\grad^k \Pop_{s} f\right|\right\|_{\Leb^q}^{2/k} \dif s \leq \left\|\left|\grad^k f\right|\right\|_{\Leb^q}^{2/k}  \int_0^\infty 2\, \upe^{-2s} \dif s = \left\|\left|\grad^k f\right|\right\|_{\Leb^q}^{2/k}.
\end{equation*}
Combining these estimates, we obtain
\begin{equation*}
    \Vop^k(\Pop_t f) \leq \upe^{-2kt}\Vop^k(f)^{1-\vartheta} \min\left\{\Vop^k(f),\,\left\|\left|\grad^k f\right|\right\|_{\Leb^q}^{2}\right\}^{\vartheta}.
\end{equation*}
Applying the same line of reasoning to the alternative expansion
\begin{equation*}
    \Vop^k(\Pop_t f)^{1/k} =\int_0^\infty 2 \left(\sum_{J\subseteq[n]:|J|=k} \left\|\Dop_J \Pop_{t+s} f\right\|_{\Leb^2}^{2}\right)^{1/k} \dif s,
\end{equation*}
we similarly deduce that
\begin{equation*}
    \Vop^k(\Pop_t f) \leq \upe^{-2kt}\Vop^k(f)^{1-\vartheta} \min\left\{\Vop^k(f),\, \sum_{J\subseteq[n]:|J|=k}\left\|\Dop_J f\right\|_{\Leb^q}^2\right\}^{\vartheta}.
\end{equation*}
Taking the minimum of these two upper bounds completes the proof of \eqref{eq:var-decay-Boolean}.

Similarly, assuming w.l.o.g. $|J|\geq 1$, the definition of $\Vop_J$ yields
\begin{equation*}
    \Vop_J(\Pop_t f)^{1/|J|} = \int_0^\infty 2 \left\|\Dop_J \Pop_{t+s} f\right\|_{\Leb^2}^{2/|J|} \dif s.
\end{equation*}
Following the same procedure establishes \eqref{eq:partial-var-decay-Boolean}.
\end{proof}

In the Gaussian setting, invoking the Gaussian hypercontractivity (Proposition~\ref{prop:hypercontractivity-Gaussian}) and the corresponding semigroup--derivative commutation law (Lemma~\ref{lem:derivative-semigroup-decay-Gaussian}), we obtain the following parallel estimates.

\begin{lemma}\label{lem:var-decay-Gaussian}
For all $k\in \Nbb$, $f\in \Sob^k(\Rbb^n,\gamma_n)$, $q\in [1,2)$ and $t\geq 0$,
\begin{equation}\label{eq:var-decay-Gaussian}
    \Vop^k(\Pop_t f) \leq \upe^{-2kt} \Vop^k(f)^{1-\vartheta} \min\left\{\Vop^k(f),\,\left\|\left|\grad^k f\right|\right\|_{\Leb^q}^{2},\, \sum_{\kappa\in \Nbb^n:|\kappa|=k}\left\|\Dop^\kappa f\right\|_{\Leb^q}^2\right\}^{\vartheta},
\end{equation}
and further for all $\kappa\in \Nbb^n$ with $|\kappa|=k$,
\begin{equation}\label{eq:partial-var-decay-Gaussian}
    \Vop^\kappa(\Pop_t f) \leq \upe^{-2kt} \Vop^\kappa(f)^{1-\vartheta} \min\left\{\Vop^\kappa(f),\,\left\|\Dop^\kappa f\right\|_{\Leb^q}^2\right\}^{\vartheta},
\end{equation}
where $\vartheta=\vartheta(q,t) := \min\left\{\frac{q}{2-q}\tanh(t), 1\right\}$.
\end{lemma}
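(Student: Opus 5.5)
The plan is to transcribe the proof of Lemma~\ref{lem:var-decay-Boolean} verbatim into the Gaussian setting. We replace the noise semigroup by the OU semigroup, the discrete derivatives $\Dop_J$ by the normalized derivatives $\Dop^\kappa$, the commutation law of Lemma~\ref{lem:derivative-semigroup-decay-Boolean} by Lemma~\ref{lem:derivative-semigroup-decay-Gaussian}, and Bonami--Beckner by Nelson--Gross (Proposition~\ref{prop:hypercontractivity-Gaussian}). Assume w.l.o.g.\ $k\geq 1$ and $\Vop^k(f)<\infty$; the latter holds automatically since $f\in\Sob^k$ gives $\Vop^k(f)\leq \||\grad^k f|\|_{\Leb^2}^2$ by Proposition~\ref{prop:high-order-var-bound-Gaussian}.

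\textbf{Total functional.} By the semigroup property and \eqref{eq:k-derivative-semigroup-decay-Gaussian} applied to $\Pop_s f$,
\begin{equation*}
    \Vop^k(\Pop_t f)^{1/k}=\int_0^\infty 2\left\|\left|\grad^k \Pop_{t+s} f\right|\right\|_{\Leb^2}^{2/k}\dif s \leq \upe^{-2t}\int_0^\infty 2\left\|\Pop_t\left|\grad^k \Pop_s f\right|\right\|_{\Leb^2}^{2/k}\dif s.
\end{equation*}
\begin{itemize}
\item Hypercontractivity with $p=1+\upe^{-2t}$ bounds $\|\Pop_t|\grad^k\Pop_s f|\|_{\Leb^2}$ by $\||\grad^k\Pop_s f|\|_{\Leb^p}$.
\item Log-convexity of $\Leb^p$ norms gives the interpolation $\||\grad^k\Pop_s f|\|_{\Leb^q}^{\vartheta}\,\||\grad^k\Pop_s f|\|_{\Leb^2}^{1-\vartheta}$. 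The exponent is $\vartheta=\min\{\frac{1/p-1/2}{1/q-1/2},1\}$, and a direct computation gives $\frac{1/p-1/2}{1/q-1/2}=\frac{q}{2-q}\tanh t$.
\item Hölder in $s$ with exponents $1/(1-\vartheta)$ and $1/\vartheta$ splits the integral into $\Vop^k(f)^{(1-\vartheta)/k}$ times $\bigl(\int_0^\infty 2\||\grad^k\Pop_s f|\|_{\Leb^q}^{2/k}\dif s\bigr)^{\vartheta}$.
\end{itemize}
The last integral admits two bounds. Monotonicity of $\Leb^q$ norms bounds it by $\Vop^k(f)^{1/k}$. The $\Leb^q$ energy-decay estimate \eqref{eq:derivative-semigroup-decay-Gaussian} bounds it by $\||\grad^k f|\|_{\Leb^q}^{2/k}\int_0^\infty 2\upe^{-2s}\dif s=\||\grad^k f|\|_{\Leb^q}^{2/k}$. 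Raising everything to the power $k$ gives the first two entries of the minimum.

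\textbf{Third entry of the minimum.} Write $\||\grad^k\Pop_{t+s}f\|_{\Leb^2}^2=\sum_{|\kappa|=k}\|\Dop^\kappa\Pop_{t+s}f\|_{\Leb^2}^2$. Apply \eqref{eq:kappa-derivative-semigroup-decay-Gaussian}, then hypercontractivity and $\Leb^p$ interpolation to each $\kappa$-term separately. Then use Hölder for the sum over $\kappa$, namely $\sum_\kappa a_\kappa^{2\vartheta}b_\kappa^{2(1-\vartheta)}\leq(\sum a_\kappa^2)^{\vartheta}(\sum b_\kappa^2)^{1-\vartheta}$, with $a_\kappa=\|\Dop^\kappa\Pop_s f\|_{\Leb^q}$ and $b_\kappa=\|\Dop^\kappa\Pop_s f\|_{\Leb^2}$. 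This gives
\begin{equation*}
    \left\|\left|\grad^k\Pop_{t+s}f\right|\right\|_{\Leb^2}^{2}\leq \upe^{-2kt}\Bigl(\sum_\kappa\|\Dop^\kappa\Pop_s f\|_{\Leb^q}^2\Bigr)^{\vartheta}\left\|\left|\grad^k\Pop_s f\right|\right\|_{\Leb^2}^{2(1-\vartheta)}.
\end{equation*}
Hölder in $s$ is then applied as before. The $\Leb^q$ energy decay gives $\sum_\kappa\|\Dop^\kappa\Pop_s f\|_{\Leb^q}^2\leq \upe^{-2ks}\sum_\kappa\|\Dop^\kappa f\|_{\Leb^q}^2$, so the $s$-integral again produces the factor $1$.

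\textbf{Partial estimate \eqref{eq:partial-var-decay-Gaussian}.} The same chain is applied to the single scalar function $\Dop^\kappa\Pop_s f$, using $|\Dop^\kappa\Pop_{t+s}f|=\upe^{-kt}|\Pop_t\Dop^\kappa\Pop_s f|$, which is an exact identity here.

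\textbf{Main obstacle.} The only genuinely new point compared with the Boolean case is justifying the operations for $f\in\Sob^k$ only, with no boundedness assumed. The commutation law needs $\Dop^\kappa\Pop_s f\in\Leb^2$, which holds. Hypercontractivity must be applied to $|\grad^k\Pop_s f|\in\Leb^2$, which is fine. The integrals may a priori be infinite, but their finiteness is exactly controlled by $\Vop^k(f)\leq\||\grad^k f|\|_{\Leb^2}^2<\infty$. If $\vartheta=1$, the Hölder step degenerates harmlessly.
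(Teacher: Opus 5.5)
Your proposal is correct and follows essentially the paper's own route: the paper obtains the Gaussian lemma by transcribing the proof of Lemma~\ref{lem:var-decay-Boolean}, namely the commutation law, then Nelson--Gross hypercontractivity with $p=1+\upe^{-2t}$, then $\Leb^p$ interpolation, then H\"older in $s$, and finally the $\Leb^q$ energy decay. Your explicit H\"older step over $\kappa$ for the third entry of the minimum, and your integrability remarks, simply fill in details the paper covers with ``the same line of reasoning.''
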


\section{Proofs of main theorems}

In this section, we give the proofs of the main theorems stated in Section~\ref{subsec:results}.

\subsection{The master high-order energy--variance inequalities}\label{subsec:high-order-energy-var-proof}

By combining the interpolation bounds and high-order variance-decay estimates developed in the preceding sections, we first establish the following core inequalities relating high-order (total or partial) energies to their corresponding variance functionals. These master inequalities subsequently yield our main results as direct corollaries.

More specifically, by leveraging Lemmas \ref{lem:energy-bound-Boolean} and \ref{lem:var-decay-Boolean}, we establish the following total high-order energy--variance inequality in the Boolean setting:
\begin{lemma}\label{lem:high-order-energy-var-Boolean}
For all $k\in \Nbb_{\leq n}$, $f:\cube^n\to \Rbb$, and $1\leq q\leq p\leq 2$,
\begin{align*}
    &\left\|f\right\|_{\Leb^\infty}^{2-p} \left\|\left|\grad^k f\right|\right\|_{\Leb^p}^{p} \geq M(p)^k\Vop^{k}(f) \\
    &\hspace{1cm} \cdot\left[1+ \frac{q}{2k(2-q)}\max\left\{\ln^+\left(\frac{\Vop^k(f)}{\left\|\left|\grad^k f\right|\right\|_{\Leb^q}^2}\right),\, \ln^+\left(\frac{\Vop^k(f)}{\sum_{J:|J|=k} \left\|\Dop_J f\right\|_{\Leb^q}^2}\right)\right\}\right]^\frac{kp}{2},
\end{align*}
where $M(p) = p \, 2^{-\frac{p}{2}} \max_{x\geq 0} \frac{1-\upe^{-x}}{x^{p/2}}\gtrsim 1$.
\end{lemma}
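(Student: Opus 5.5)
The plan is to run the variance-decay argument of Section~\ref{subsec:outline} directly. It combines the short-time decay of $\Vop^k$ from Lemma~\ref{lem:var-decay-Boolean} with the interpolation bound of Lemma~\ref{lem:energy-bound-Boolean}, and then optimizes over the time $t$.

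Assume $k\geq 1$ and $\Vop^k(f)>0$. Write $V:=\Vop^k(f)$ and $A:=\left\|f\right\|_{\Leb^\infty}^{2-p}\left\|\left|\grad^k f\right|\right\|_{\Leb^p}^{p}$. Let $m$ be whichever of $\left\|\left|\grad^k f\right|\right\|_{\Leb^q}^2$ and $\sum_{J:|J|=k}\left\|\Dop_J f\right\|_{\Leb^q}^2$ attains the maximum of the two logarithms, and set $L:=\ln^+(V/m)$. Using the minimum in \eqref{eq:var-decay-Boolean}, we get $\Vop^k(\Pop_t f)\leq \upe^{-2kt}V\,\upe^{-\vartheta L}$ in all cases. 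Here $\vartheta=\min\{\frac{q}{2-q}\tanh t,1\}$; when $\ln(V/m)<0$ we use the entry $\Vop^k(f)$ of the minimum instead, which gives $L=0$.

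\emph{Upper bound.} The definition of $\Vop^k$ and the semigroup property give
\begin{equation*}
V^{1/k}-\Vop^k(\Pop_t f)^{1/k}=\int_0^t 2\left\|\left|\grad^k\Pop_s f\right|\right\|_{\Leb^2}^{2/k}\dif s .
\end{equation*}
Apply Lemma~\ref{lem:energy-bound-Boolean} to each integrand. The exponentials combine neatly: $\upe^{-2(p-1)s}(\upe^{4s}-1)^{-(1-p/2)}=\upe^{-2s}(1-\upe^{-4s})^{p/2-1}$. Hence the right-hand side is at most $A^{1/k}I_p(t)$, where $I_p(t):=\int_0^t 2\upe^{-2s}(1-\upe^{-4s})^{p/2-1}\dif s$. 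This integral is finite near $0$ precisely because the power $2/k$ rebalances the $k$-fold singularity.

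\emph{Lower bound.} By the decay estimate, the left-hand side is at least $V^{1/k}\bigl(1-\upe^{-2t-\vartheta L/k}\bigr)$. Combining the two bounds gives, for every admissible $t$,
\begin{equation*}
A^{1/k}\geq V^{1/k}\,\frac{1-\upe^{-2t-\vartheta(q,t)L/k}}{I_p(t)}.
\end{equation*}
The proof then reduces to a one-variable optimization over $t$. Heuristically, $\vartheta\approx\frac{q}{2-q}t$ and $I_p(t)\approx \frac{2^{p}}{p}t^{p/2}$ for small $t$. Put $c:=\frac{q}{2k(2-q)}$. The exponent then becomes $x=2t(1+cL)$, and the quotient becomes $p\,2^{-p/2}(1+cL)^{p/2}(1-\upe^{-x})/x^{p/2}$. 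Maximizing over $x$ and raising to the $k$-th power yields exactly $M(p)^k(1+cL)^{kp/2}$, which is the claimed bound.

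\emph{Main obstacle.} This heuristic must be made exact, i.e.\ with no loss of constants, despite $\tanh t<t$ and the cap $\vartheta\leq 1$. My first attempt will be the substitution $u=\tanh t$ (equivalently, parametrizing by the hypercontractive exponent $1+\upe^{-2t}$). The hope is that in this variable both $2t+\vartheta L/k$ and $I_p(t)$ compare cleanly with their linearized versions. Concretely, I would aim to show $2t+\vartheta L/k\geq 2u(1+cL)$ and $I_p(t)\leq \frac{2^p}{p}u^{p/2}$ via an explicit computation with $1-\upe^{-4s}$ and $\upe^{-2s}$. Then I would optimize in $u\in(0,1)$.

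If the optimal $u$ would push $\vartheta$ past $1$, that is in the regime where $L$ is tiny relative to $k$, I would instead choose $t$ at the saturation point. There I would use the endpoint localization given by log-convexity (Proposition~\ref{prop:var-log-convex-Boolean}) to check that no constant is lost. Finally, $M(p)\gtrsim 1$ follows by testing $x=1$ in the definition of $M(p)$.
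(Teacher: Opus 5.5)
Your main line is the paper's own argument. The paper also restricts to $\epsilon\le\arctanh(\frac{2-q}{q})$ so that $\vartheta=\frac{q}{2-q}\tanh\epsilon$. It then uses $\epsilon\ge\tanh\epsilon$ to obtain $\Vop^k(\Pop_\epsilon f)^{1/k}\le \upe^{-2\Rscr\tanh\epsilon}\Vop^k(f)^{1/k}$, which is your $2t+\vartheta L/k\ge 2u(1+cL)$. Next it writes your $I_p(\epsilon)$ as $2^{p-1}\Beta(\frac{\tanh\epsilon}{1+\tanh\epsilon};\frac p2,\frac p2)\le \frac{2^p}{p}\tanh(\epsilon)^{p/2}$ via Lemma~\ref{lem:beta_upper}. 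Finally it optimizes in $x=2\Rscr\tanh\epsilon$. Both comparison inequalities you aim for are therefore true, and they are proved exactly as you suggest.

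The gap is your treatment of the cap $\vartheta\le 1$. Your fallback is to stop at the saturation time and invoke log-convexity, and this cannot work. Log-convexity (Proposition~\ref{prop:var-log-convex-Boolean}) only transfers a decay bound from an endpoint $t=\epsilon$ to interior times $t\in[0,\epsilon]$; beyond $\epsilon$ convexity yields lower bounds, not upper bounds. So it can never supply more decay than Lemma~\ref{lem:var-decay-Boolean} gives at the saturation time. If the optimizer genuinely lay past saturation, you would lose a constant.

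What is actually needed is to show that the cap is never active at the optimizer. This is precisely where the hypothesis $q\le p$ enters, and your proposal never uses it. The admissible range is $x\in(0,\,2\Rscr\frac{2-q}{q}]$. The maximizer $x_*$ of $(1-\upe^{-x})/x^{p/2}$ satisfies $x_*=\frac p2(\upe^{x_*}-1)\ge \frac p2(x_*+\frac12 x_*^2)$, hence $x_*\le \frac{2(2-p)}{p}$; this is Lemma~\ref{lem:phi_max_location}. Since $\Rscr\ge 1$, and $\frac{2-q}{q}\ge\frac{2-p}{p}$ when $q\le p$, you get $x_*\le 2\Rscr\frac{2-q}{q}$. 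Equivalently, your optimal $u_*=x_*/(2(1+cL))\le\frac{2-p}{p}\le\frac{2-q}{q}$ uniformly in $L$, with $L=0$ the worst case. Consequently the supremum over the uncapped range already equals $\max_{x\ge0}(1-\upe^{-x})/x^{p/2}$, and you obtain $M(p)^k$ with no loss. Replace your log-convexity fallback with this range check and the proof is complete.
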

\begin{proof}
Assume w.l.o.g. $k\geq 1$. Denote $\epsilon_0(q):=\arctanh(\frac{2-q}{q})$ and
\begin{equation*}
    \Rscr(f,q) := 1+ \frac{q}{2k(2-q)}\max\left\{\ln^+\left(\frac{\Vop^k(f)}{\left\|\left|\grad^k f\right|\right\|_{\Leb^q}^2}\right),\, \ln^+\left(\frac{\Vop^k(f)}{\sum_{J:|J|=k} \left\|\Dop_J f\right\|_{\Leb^q}^2}\right)\right\}.
\end{equation*}
Let $\epsilon\in (0,\epsilon_0(q)]$. Then $\frac{q}{2-q}\tanh(\epsilon)\leq 1$, and hence by \eqref{eq:var-decay-Boolean} in Lemma~ \ref{lem:var-decay-Boolean}, we have
\begin{equation*}
    \Vop^k(\Pop_\epsilon f) \leq \upe^{-2k\epsilon} \Vop^k(f) \min\left\{1,\,\frac{\left\|\left|\grad^k f\right|\right\|_{\Leb^q}^{2}}{\Vop^k(f)},\, \frac{\sum_{J:|J|=k}\left\|\Dop_J f\right\|_{\Leb^q}^2}{\Vop^k(f)}\right\}^{\frac{q}{2-q}\tanh(\epsilon)}.
\end{equation*}
Since $\epsilon\geq \tanh(\epsilon)$, we obtain
\begin{align*}
    \Vop^k(\Pop_\epsilon f) &\leq \upe^{-2k\tanh(\epsilon)} \Vop^k(f) \min\left\{1,\,\frac{\left\|\left|\grad^k f\right|\right\|_{\Leb^q}^{2}}{\Vop^k(f)},\, \frac{\sum_{J:|J|=k}\left\|\Dop_J f\right\|_{\Leb^q}^2}{\Vop^k(f)}\right\}^{\frac{q}{2-q}\tanh(\epsilon)}\\
    &= \upe^{-2 k\Rscr(f,q) \tanh(\epsilon)} \Vop^k(f).
\end{align*}
Rewriting the inequality and recalling the definition of $\Vop^k$ yields
\begin{equation*}
    \left(1-\upe^{-2\Rscr(f,q) \tanh(\epsilon)}\right) \Vop^k(f)^{1/k} \leq \Vop^k(f)^{1/k} - \Vop^k(\Pop_\epsilon f)^{1/k} = \int_{0}^{\epsilon} 2 \left\|\left|\grad^k \Pop_s f\right|\right\|_{\Leb^2}^{2/k} \dif s.
\end{equation*}
On the other hand, by Lemma~\ref{lem:energy-bound-Boolean}, for all $p\in [1,2]$ we have
\begin{equation*}
    \int_{0}^{\epsilon} 2 \left\|\left|\grad^k \Pop_s f\right|\right\|_{\Leb^2}^{2/k} \dif s \leq \left(\left\|f\right\|_{\Leb^{\infty}}^{2-p} \left\|\left|\grad^k f\right|\right\|_{\Leb^p}^{p}\right)^{1/k}\int_{0}^{\epsilon} 2\, \upe^{-2\left(p-1\right)s}\left(\upe^{4s}-1\right)^{-(1-\frac{p}{2})} \dif s.
\end{equation*}
Here the integral yields an incomplete Beta function expression:
\begin{equation*}
    \int_{0}^{\epsilon} 2\, \upe^{-2\left(p-1\right)s}\left(\upe^{4s}-1\right)^{-(1-\frac{p}{2})} \dif s = 2^{p-1}\Beta\left(\frac{1-\upe^{-2\epsilon}}{2};\frac{p}{2},\frac{p}{2}\right).
\end{equation*}
Note that $\frac{1-\upe^{-2\epsilon}}{2} = \frac{\tanh(\epsilon)}{1+\tanh(\epsilon)}$. Lemma~\ref{lem:beta_upper} yields
\begin{equation*}
    \Beta\left(\frac{1-\upe^{-2\epsilon}}{2};\frac{p}{2},\frac{p}{2}\right) \leq \frac{2}{p}\, \tanh(\epsilon)^{\frac{p}{2}}.
\end{equation*}
Rounding up, and substituting $r=2\Rscr(f,q) \tanh(\epsilon)$, we obtain
\begin{equation*}
    \left\|f\right\|_{\Leb^\infty}^{2-p} \left\|\left|\grad^k f\right|\right\|_{\Leb^p}^{p} \geq \Vop^k(f) \left(p \, 2^{-\frac{p}{2}}  \frac{1-\upe^{-r}}{r^{p/2}}\right)^{k} \Rscr(f,q)^{\frac{kp}{2}}.
\end{equation*}
Here $r$ takes value in $(0,2\Rscr(f,q)\frac{2-q}{q}]$. Note that $q\leq p$ and $\Rscr(f,q)\geq 1$. By Lemma~\ref{lem:phi_max_location},
\begin{equation*}
    2\Rscr(f,q)\,\frac{2-q}{q} \geq \frac{2(2-p)}{p} \geq \argmax_{x\geq 0} \frac{1-\upe^{-x}}{x^{p/2}}.
\end{equation*}
Hence optimizing over $r\in (0,2\Rscr(f,q)\frac{2-q}{q}]$ yields
\begin{equation*}
    \left\|f\right\|_{\Leb^\infty}^{2-p} \left\|\left|\grad^k f\right|\right\|_{\Leb^p}^{p} \geq M(p)^k \Vop^k(f)\,\Rscr(f,q)^{\frac{kp}{2}},
\end{equation*}
with $M(p)=p \, 2^{-\frac{p}{2}} \max_{x\geq 0} \frac{1-\upe^{-x}}{x^{p/2}}$ as desired.
\end{proof}

Compared to the global estimate with the factor $M(p)^k$, the operator-boundedness of the discrete derivatives yields a sharper partial high-order energy--variance bound with an optimal constant of $1$ in the Boolean setting:

\begin{lemma}\label{lem:high-order-inf-var-Boolean}
For all $J\subseteq [n]$, $f:\cube^n\to \Rbb$, $p\in [1,2]$ and $q\in [1,2)$,
\begin{equation*}
    \left\|f\right\|_{\Leb^\infty}^{2-p} \left\|\Dop_J f\right\|_{\Leb^p}^{p} \geq \Vop_J(f) \left[1+\frac{q}{2|J|(2-q)} \ln^{+}\left(\frac{\Vop_J(f)}{\left\|\Dop_J f\right\|_{\Leb^q}^{2}}\right)\right]^{|J|}.
\end{equation*}
\end{lemma}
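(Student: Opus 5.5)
We follow the scheme of Lemma~\ref{lem:high-order-energy-var-Boolean}, replacing the Bismut-type interpolation bound (Lemma~\ref{lem:energy-bound-Boolean}) by the singularity-free estimate from the remark after Lemma~\ref{lem:derivative-operator-norm}. We may assume $|J|=k\geq 1$ and $\Vop_J(f)>0$, the case $J=\varnothing$ being trivial. Write
\begin{equation*}
\Rscr := 1+\frac{q}{2k(2-q)}\ln^+\left(\frac{\Vop_J(f)}{\left\|\Dop_J f\right\|_{\Leb^q}^2}\right),\qquad A:=\left\|f\right\|_{\Leb^\infty}^{2-p}\left\|\Dop_J f\right\|_{\Leb^p}^{p}.
\end{equation*}

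\textbf{Step 1 (decay).} For $\epsilon\in(0,\epsilon_0(q)]$ with $\epsilon_0(q)=\arctanh(\frac{2-q}{q})$, we have $\vartheta=\frac{q}{2-q}\tanh\epsilon$, so \eqref{eq:partial-var-decay-Boolean} gives
\begin{equation*}
\Vop_J(\Pop_\epsilon f)\leq \upe^{-2k\epsilon}\Vop_J(f)\min\{1,\|\Dop_Jf\|_{\Leb^q}^2/\Vop_J(f)\}^{\vartheta}.
\end{equation*}
Using $\epsilon\geq\tanh\epsilon$, this becomes $\Vop_J(\Pop_\epsilon f)\leq \upe^{-2k\Rscr\tanh\epsilon}\Vop_J(f)$. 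Taking $k$-th roots,
\begin{equation*}
\left(1-\upe^{-2\Rscr\tanh\epsilon}\right)\Vop_J(f)^{1/k}\leq \int_0^\epsilon 2\left\|\Dop_J\Pop_sf\right\|_{\Leb^2}^{2/k}\dif s.
\end{equation*}

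\textbf{Step 2 (upper bound).} The remark after Lemma~\ref{lem:derivative-operator-norm} gives $\|\Dop_J\Pop_s f\|_{\Leb^2}^2\leq \upe^{-2sk}A$. Hence the integral is at most
\begin{equation*}
A^{1/k}\int_0^\epsilon 2\upe^{-2s}\dif s=A^{1/k}(1-\upe^{-2\epsilon}).
\end{equation*}
This yields
\begin{equation*}
A\geq \Vop_J(f)\left(\frac{1-\upe^{-2\Rscr\tanh\epsilon}}{1-\upe^{-2\epsilon}}\right)^k.
\end{equation*}

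\textbf{Step 3 (optimize; the main obstacle).} It remains to show that the ratio can be made at least $\Rscr$ for a suitable admissible $\epsilon$. We do not need $\epsilon$ of order $\epsilon_0$: we let $\epsilon\to0^+$. The numerator behaves like $2\Rscr\epsilon$ and the denominator like $2\epsilon$, so the ratio tends to $\Rscr$. Since $\epsilon_0(q)>0$ for every $q\in[1,2)$, arbitrarily small $\epsilon$ are admissible, and letting $\epsilon\to0$ gives exactly $A\geq\Vop_J(f)\Rscr^{k}$, with constant $1$.

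In effect, this is a derivative comparison at $t=0$. One could also argue directly from the log-convexity of Proposition~\ref{prop:var-log-convex-Boolean}: the right derivative of $\Vop_J(\Pop_t f)^{1/k}$ at $0$ is $-2\|\Dop_J f\|_{\Leb^2}^{2/k}$, and the decay bound forces it to be at most $-2\Rscr\,\Vop_J(f)^{1/k}$. This gives $\|\Dop_J f\|_{\Leb^2}^{2}\geq \Rscr^k\Vop_J(f)$, and combining with $\|\Dop_Jf\|_{\Leb^2}^2\leq A$ (from the same remark) finishes the proof.

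The only delicate point is justifying the limit. This is harmless here: the Boolean cube is finite-dimensional, so all quantities are finite sums of exponentials and $t\mapsto\Vop_J(\Pop_tf)$ is continuous in $t$.
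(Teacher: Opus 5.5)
Your proof is correct and follows essentially the same route as the paper. Both use the partial decay bound \eqref{eq:partial-var-decay-Boolean} together with $\epsilon\geq\tanh\epsilon$ to get $\Vop_J(\Pop_\epsilon f)^{1/|J|}\leq \upe^{-2\Rscr\tanh\epsilon}\Vop_J(f)^{1/|J|}$, and the paper then compares right derivatives at $\epsilon=0$ and applies $\left\|\Dop_J f\right\|_{\Leb^2}^2\leq \left\|f\right\|_{\Leb^\infty}^{2-p}\left\|\Dop_J f\right\|_{\Leb^p}^p$, which is exactly your alternative argument and amounts to your $\epsilon\to 0^+$ difference-quotient limit. (In your main route no continuity of $t\mapsto\Vop_J(\Pop_t f)$ is actually needed, since the $\epsilon$-dependence sits entirely in the explicit ratio $\frac{1-\upe^{-2\Rscr\tanh\epsilon}}{1-\upe^{-2\epsilon}}$.)
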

\begin{proof}
Assume w.l.o.g. $|J|\geq 1$. Denote by $\epsilon_0(q):=\arctanh(\frac{2-q}{q})$ and
\begin{equation*}
    \Rscr(f,q) := 1+\frac{q}{2|J|(2-q)} \ln^{+}\left(\frac{\Vop_J(f)}{\left\|\Dop_J f\right\|_{\Leb^q}^{2}}\right).
\end{equation*}
For all $\epsilon\in [0,\epsilon_0(q)]$, by \eqref{eq:partial-var-decay-Boolean} in Lemma~\ref{lem:var-decay-Boolean} and the fact that $\epsilon\geq \tanh(\epsilon)$, we have
\begin{align*}
    \Vop_J(\Pop_\epsilon f) &\leq\upe^{-2\epsilon |J|} \Vop_J(f) \min\left\{1,\,\frac{\left\|\Dop_J f\right\|_{\Leb^q}^2}{\Vop_J(f)}\right\}^{\frac{q}{2-q}\tanh(\epsilon)}\\
    &\leq \upe^{-2|J|\tanh(\epsilon)} \Vop_J(f) \min\left\{1,\,\frac{\left\|\Dop_J f\right\|_{\Leb^q}^2}{\Vop_J(f)}\right\}^{\frac{q}{2-q}\tanh(\epsilon)}\\
    &= \upe^{-2 |J|\Rscr(f,q)\tanh(\epsilon)} \Vop_J(f).
\end{align*}
Rewriting the inequality yields
\begin{equation}\label{eq:J-var-decay-Boolean-rewrite}
    \Vop_J(\Pop_\epsilon f)^{1/|J|} \leq \upe^{-2 \Rscr(f,q)\tanh(\epsilon)} \Vop_J(f)^{1/|J|}.
\end{equation}
Since \eqref{eq:J-var-decay-Boolean-rewrite} holds for all $\epsilon \in [0, \epsilon_0(q)]$ with equality at $\epsilon = 0$, by comparing the right-hand derivatives of both sides at $\epsilon = 0$, we obtain
\begin{equation*}
    \left.\frac{\dif}{\dif \epsilon} \Vop_J(\Pop_\epsilon f)^{1/|J|}\right|_{\epsilon = 0}  \leq \left.\frac{\dif}{\dif \epsilon} \left(\upe^{-2 \Rscr(f,q)\tanh(\epsilon)} \Vop_J(f)^{1/|J|}\right) \right|_{\epsilon = 0} = -2\Vop_J(f)^{1/|J|} \Rscr(f,q).
\end{equation*}
On the other hand, by the definition of $\Vop_J$,
\begin{equation*}
    \left.\frac{\dif}{\dif \epsilon} \Vop_J(\Pop_\epsilon f)^{1/|J|}\right|_{\epsilon = 0} = \left.\frac{\dif}{\dif \epsilon} \left(\int_\epsilon^\infty 2 \left\|\Dop_J \Pop_t f\right\|_{\Leb^2}^{2/|J|} \dif t \right)\right|_{\epsilon = 0} = -2 \left\|\Dop_J f\right\|_{\Leb^2}^{2/|J|}.
\end{equation*}
Further by the operator norm bound $\left\|\Dop_J\right\|_{\Leb^\infty\to\Leb^\infty}=1$ (Lemma~\ref{lem:derivative-operator-norm}),
\begin{equation*}
    \left\|\Dop_J f\right\|_{\Leb^2}^2 = \Eop\left[\left|\Dop_J f\right|^2\right] \leq \left\|\Dop_J f\right\|_{\Leb^\infty}^{2-p} \left\|\Dop_J f\right\|_{\Leb^p}^p \leq  \left\|f\right\|_{\Leb^\infty}^{2-p} \left\|\Dop_J f\right\|_{\Leb^p}^{p}.
\end{equation*}
Rounding up, we obtain
\begin{equation*}
    \left\|f\right\|_{\Leb^\infty}^{2-p} \left\|\Dop_J f\right\|_{\Leb^p}^{p} \geq \left\|\Dop_J f\right\|_{\Leb^2}^2 \geq \Vop_J(f) \,\Rscr(f,q)^{|J|}
\end{equation*}
as desired.
\end{proof}

In the Gaussian setting, utilizing the parallel machinery of Lemmas \ref{lem:energy-bound-Gaussian} and \ref{lem:var-decay-Gaussian} via an optimization argument analogous to that of Lemma~\ref{lem:high-order-energy-var-Boolean} yields both the total and partial high-order energy--variance inequalities.

\begin{lemma}\label{lem:high-order-energy-var-Gaussian}
For all $k\in \Nbb$, $f\in \Sob^k(\Rbb^n,\gamma_n)\cap \Leb^{\infty}(\Rbb^n,\gamma_n)$, and $1\leq q\leq p\leq 2$,
\begin{equation}\label{eq:high-order-energy-var-Gaussian}
\begin{aligned}
    &\left\|f\right\|_{\Leb^\infty}^{2-p} \left\|\left|\grad^k f\right|\right\|_{\Leb^p}^{p} \geq M(p)^k\Vop^{k}(f) \\
    &\cdot\left[1+ \frac{q}{2k(2-q)}\max\left\{\ln^+\left(\frac{\Vop^k(f)}{\left\|\left|\grad^k f\right|\right\|_{\Leb^q}^2}\right),\, \ln^+\left(\frac{\Vop^k(f)}{\sum_{|\kappa|=k} \left\|\Dop^\kappa f\right\|_{\Leb^q}^2}\right)\right\}\right]^\frac{kp}{2},
\end{aligned}
\end{equation}
and further for all $\kappa\in \Nbb^n$ with $|\kappa|=k$,
\begin{equation}\label{eq:high-order-inf-var-Gaussian}
    \left\|f\right\|_{\Leb^\infty}^{2-p} \left\|\Dop^\kappa f\right\|_{\Leb^p}^{p} \geq M(p)^{k} \Vop^\kappa(f) \left[1+\frac{q}{2k(2-q)} \ln^{+}\left(\frac{\Vop^\kappa(f)}{\left\|\Dop^\kappa f\right\|_{\Leb^q}^{2}}\right)\right]^\frac{kp}{2}.
\end{equation}
\end{lemma}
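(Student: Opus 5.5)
We follow the proof of Lemma~\ref{lem:high-order-energy-var-Boolean} essentially verbatim, replacing each Boolean ingredient by its Gaussian counterpart. Assume w.l.o.g.\ $k\geq 1$ and $\Vop^k(f)<\infty$; the latter holds automatically since $f\in\Sob^k(\Rbb^n,\gamma_n)$ and Proposition~\ref{prop:high-order-var-bound-Gaussian} gives $\Vop^k(f)\leq \left\|\left|\grad^k f\right|\right\|_{\Leb^2}^2$. Set $\epsilon_0(q)=\arctanh(\frac{2-q}{q})$ and let $\Rscr(f,q)$ be the bracketed factor in \eqref{eq:high-order-energy-var-Gaussian}.

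\textbf{Decay step.} For $\epsilon\in(0,\epsilon_0(q)]$ we have $\vartheta(q,\epsilon)=\frac{q}{2-q}\tanh(\epsilon)$. Apply \eqref{eq:var-decay-Gaussian} of Lemma~\ref{lem:var-decay-Gaussian}, and use $\epsilon\geq\tanh\epsilon$ to replace $\upe^{-2k\epsilon}$ by $\upe^{-2k\tanh\epsilon}$. Taking $k$-th roots gives
\[
\Vop^k(\Pop_\epsilon f)^{1/k}\leq \upe^{-2\Rscr(f,q)\tanh\epsilon}\,\Vop^k(f)^{1/k}.
\]
By the definition \eqref{eq:k-var-def-Gaussian}, the difference $\Vop^k(f)^{1/k}-\Vop^k(\Pop_\epsilon f)^{1/k}$ equals $\int_0^\epsilon 2\left\|\left|\grad^k\Pop_s f\right|\right\|_{\Leb^2}^{2/k}\dif s$. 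This integral is therefore bounded below by $(1-\upe^{-2\Rscr\tanh\epsilon})\Vop^k(f)^{1/k}$.

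\textbf{Energy step.} Bound the same integral from above using \eqref{eq:energy-bound-Gaussian} of Lemma~\ref{lem:energy-bound-Gaussian}. This produces the factor $(\|f\|_{\Leb^\infty}^{2-p}\||\grad^k f|\|_{\Leb^p}^p)^{1/k}$ times
\[
\int_0^\epsilon 2\upe^{-2(p-1)s}(\upe^{4s}-1)^{-(1-p/2)}\dif s=2^{p-1}\Beta\Bigl(\tfrac{\tanh\epsilon}{1+\tanh\epsilon};\tfrac p2,\tfrac p2\Bigr)\leq 2^{p}p^{-1}\tanh(\epsilon)^{p/2},
\]
where the inequality is Lemma~\ref{lem:beta_upper}. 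Substitute $r=2\Rscr\tanh\epsilon\in(0,2\Rscr\frac{2-q}{q}]$, raise to the $k$-th power and rearrange. This yields the target inequality with factor $\bigl(p2^{-p/2}(1-\upe^{-r})r^{-p/2}\bigr)^k\Rscr^{kp/2}$. Since $q\leq p$ and $\Rscr\geq1$, Lemma~\ref{lem:phi_max_location} guarantees that the maximizer of $(1-\upe^{-x})x^{-p/2}$ lies in the admissible range of $r$. Optimizing over $r$ then gives exactly $M(p)^k$, proving \eqref{eq:high-order-energy-var-Gaussian}.

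\textbf{Partial inequality.} For \eqref{eq:high-order-inf-var-Gaussian} we repeat the argument with $\Vop^\kappa$ in place of $\Vop^k$. The decay step uses \eqref{eq:partial-var-decay-Gaussian}, noting that $|\kappa|=k$. The energy step uses \eqref{eq:partial-energy-bound-Gaussian} in place of \eqref{eq:energy-bound-Gaussian}; the time integral is identical, so the same $M(p)^k$ results. Unlike the Boolean Lemma~\ref{lem:high-order-inf-var-Boolean}, the derivative-at-zero shortcut with constant $1$ is unavailable here, because $\Dop^\kappa$ is not bounded on $\Leb^\infty$. This is why the constant $M(p)^k$ and the exponent $kp/2$ appear.

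\textbf{Main obstacle.} The only genuinely Gaussian point is justification. We need the identity $\Vop^k(f)^{1/k}-\Vop^k(\Pop_\epsilon f)^{1/k}=\int_0^\epsilon(\cdots)$, which requires finiteness of both terms; this follows from the Sobolev assumption as noted above. We also need the validity of Lemmas~\ref{lem:energy-bound-Gaussian} and~\ref{lem:var-decay-Gaussian} for $f\in\Sob^k\cap\Leb^\infty$, and both are assumed. Beyond this, the proof is purely the scalar optimization already carried out in the Boolean case.
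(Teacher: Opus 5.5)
Your proposal is correct and follows the same route as the paper, which proves this lemma by rerunning the argument of Lemma~\ref{lem:high-order-energy-var-Boolean} with Lemmas~\ref{lem:energy-bound-Gaussian} and~\ref{lem:var-decay-Gaussian} in place of their Boolean counterparts. That argument is: decay at $\epsilon\le\epsilon_0(q)$, the incomplete Beta bound via Lemma~\ref{lem:beta_upper}, and optimization in $r=2\Rscr\tanh\epsilon$ via Lemma~\ref{lem:phi_max_location}; the partial bound \eqref{eq:high-order-inf-var-Gaussian} then comes from \eqref{eq:partial-var-decay-Gaussian} and \eqref{eq:partial-energy-bound-Gaussian}.
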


\subsection{High-order Poincar\'{e}-type inequalities}\label{subsec:high-order-Poincare-proof}

Now we present the proofs of Theorems \ref{thm:high-order-Poincare-type-Boolean} and \ref{thm:high-order-Poincare-type-Gaussian}.

\begin{proof}[Proof of Theorem \ref{thm:high-order-Poincare-type-Boolean}.]
We begin by observing that a direct application of Lemma~\ref{lem:high-order-energy-var-Boolean} with $\left\|f\right\|_{\Leb^\infty}\leq 1$ immediately yields
\begin{equation*}
    \left\|\left|\grad^k f\right|\right\|_{\Leb^p}^{p} \geq M(p)^k\Vop^{k}(f).
\end{equation*}
Combining this with the lower bound $\Vop^{k}(f)\geq \Wop^{\geq k}(f)$ from Proposition~\ref{prop:high-order-var-bound-Boolean} delivers the qualitative statement. To extract the optimal constant, we refine the estimate by leveraging the baseline variance-decay estimate from Lemma~\ref{lem:var-decay-Boolean}~\eqref{eq:var-decay-Boolean}:
\begin{equation*}
    \Vop^k(\Pop_\epsilon f) \leq \upe^{-2k\epsilon} \Vop^k(f), \quad \forall\, \epsilon>0.
\end{equation*}
An optimization discussion analogous to the proof of Lemma~\ref{lem:high-order-energy-var-Boolean}, invoking the interpolation bound from Lemma~\ref{lem:energy-bound-Boolean}, yields
\begin{equation*}
    \left\|\left|\grad^k f\right|\right\|_{\Leb^p}^{p} \geq \Vop^k(f) \left(2^{1-p}\sup_{\epsilon>0} \frac{1-\upe^{-2\epsilon}}{\Beta\left(\frac{1-\upe^{-2\epsilon}}{2};\frac{p}{2},\frac{p}{2}\right)}\right)^{k}.
\end{equation*}
Lower-bounding the supremum by its asymptotic limit as $\epsilon\to \infty$ yields
\begin{equation*}
    2^{1-p}\sup_{\epsilon>0} \frac{1-\upe^{-2\epsilon}}{\Beta\left(\frac{1-\upe^{-2\epsilon}}{2};\frac{p}{2},\frac{p}{2}\right)} \geq 2^{1-p}\lim_{\epsilon\to \infty} \frac{1-\upe^{-2\epsilon}}{\Beta\left(\frac{1-\upe^{-2\epsilon}}{2};\frac{p}{2},\frac{p}{2}\right)} = \frac{2^{1-p}}{\Beta\left(\frac{1}{2};\frac{p}{2},\frac{p}{2}\right)}.
\end{equation*}
Finally, factoring in the bound $\Vop^k(f) \geq \Wop^{\geq k}(f)$, we obtain
\begin{equation*}
    \left\|\left|\grad^k f\right|\right\|_{\Leb^p}^{p} \geq C^{\text{\rm\ref{thm:high-order-Poincare-type-Boolean}}}(p)^{k} \Wop^{\geq k}(f),
\end{equation*}
with the desired sharper constant $C^{\text{\rm\ref{thm:high-order-Poincare-type-Boolean}}}(p)= 2^{1-p}/\Beta\left(\frac{1}{2};\frac{p}{2},\frac{p}{2}\right)\geq M(p)$.
\end{proof}

\begin{proof}[Proof of Theorem \ref{thm:high-order-Poincare-type-Gaussian}.]
The proof proceeds mutatis mutandis along the same lines as Theorem~\ref{thm:high-order-Poincare-type-Boolean}. Substituting the interpolation bound \eqref{eq:energy-bound-Gaussian} from Lemma~\ref{lem:energy-bound-Gaussian} and the baseline variance-decay estimate from Lemma~\ref{lem:var-decay-Gaussian}~\eqref{eq:var-decay-Gaussian} into the optimization argument, we recover the identical continuous constant $C^{\text{\rm\ref{thm:high-order-Poincare-type-Gaussian}}}(p) = C^{\text{\rm\ref{thm:high-order-Poincare-type-Boolean}}}(p)$.
\end{proof}

\subsection{High-order $\Leb^p$--$\Leb^q$ inequalities}\label{subsec:high-order-Lp-Lq-proof}

Next, we derive the high-order $\Leb^p$--$\Leb^q$ bounds (Theorems \ref{thm:high-order-Lp-Lq-inf-Boolean}, \ref{thm:high-order-Lp-Lq-energy-Boolean} and \ref{thm:high-order-Lp-Lq-Gaussian}) from the master high-order energy--variance inequalities.

In the Boolean setting, by invoking the partial high-order energy--variance inequality from Lemma~\ref{lem:high-order-inf-var-Boolean}, we deduce the high-order $\Leb^p$--$\Leb^q$ influence inequality stated in Theorem~\ref{thm:high-order-Lp-Lq-inf-Boolean}:
\begin{proof}[Proof of Theorem \ref{thm:high-order-Lp-Lq-inf-Boolean}.]
It suffices to show the partial bound for every $J\subseteq [n]$ with $|J|=k$:
\begin{equation}\label{eq:high-order-partial-Tal-KKL-Boolean}
    \frac{\left\|\Dop_J f\right\|_{\Leb^p}^{p}}{\big[1+\frac{q}{2k(2-q)} \ln^{+}(\left\|\Dop_J f\right\|_{\Leb^p}^{p}/\left\|\Dop_J f\right\|_{\Leb^q}^{2})\big]^{k}} \geq C^{\text{\rm\ref{thm:high-order-Lp-Lq-inf-Boolean}}}(q)^k \Vop_J(f).
\end{equation}
The desired result then follows by summing \eqref{eq:high-order-partial-Tal-KKL-Boolean} over all $J\subseteq [n]$ with $|J|=k$, combined with the bound $\sum_{J:|J|=k} \Vop_J(f) \geq \Wop^{\geq k}(f)$ from Proposition~\ref{prop:high-order-var-bound-Boolean}. Assume w.l.o.g. $k\geq 1$ and $\Vop_J(f)>0$. To establish \eqref{eq:high-order-partial-Tal-KKL-Boolean}, we invoke Lemma~\ref{lem:high-order-inf-var-Boolean} with $\left\|f\right\|_{\Leb^\infty}\leq 1$ to obtain
\begin{equation*}
    \left\|\Dop_J f\right\|_{\Leb^p}^{p} \geq \Vop_J(f) \left[1+\frac{q}{2k(2-q)} \ln^{+}\left(\frac{\Vop_J(f)}{\left\|\Dop_J f\right\|_{\Leb^q}^{2}}\right)\right]^{k}.
\end{equation*}
Rewriting this inequality yields
\begin{equation*}
    \left(\frac{\left\|\Dop_J f\right\|_{\Leb^p}^{p}}{\Vop_J(f)}\right)^{\frac{1}{k}} \geq 1+\frac{q}{2k(2-q)} \ln^{+}\left(\frac{\Vop_J(f)}{\left\|\Dop_J f\right\|_{\Leb^q}^{2}}\right).
\end{equation*}
Utilizing the elementary facts $\upe^{-1} x\geq \ln^+ (x)$ and $\ln^+(x)+\ln^+(y)\geq \ln^+(xy)$ yields
\begin{align*}
    &\left(\frac{\left\|\Dop_J f\right\|_{\Leb^p}^{p}}{\Vop_J(f)}\right)^{\frac{1}{k}} + \frac{q}{2k(2-q)}\, k\, \upe^{-1} \left(\frac{\left\|\Dop_J f\right\|_{\Leb^p}^{p}}{\Vop_J(f)}\right)^{\frac{1}{k}}\\
    &\geq 1+\frac{q}{2k(2-q)} \ln^{+}\left(\frac{\Vop_J(f)}{\left\|\Dop_J f\right\|_{\Leb^q}^{2}}\right) + \frac{q}{2k(2-q)}\, k \ln^{+}\left[\left(\frac{\left\|\Dop_J f\right\|_{\Leb^p}^{p}}{\Vop_J(f)}\right)^{\frac{1}{k}}\right]\\
    &\geq 1+\frac{q}{2k(2-q)} \ln^{+}\left(\frac{\left\|\Dop_J f\right\|_{\Leb^p}^{p}}{\left\|\Dop_J f\right\|_{\Leb^q}^{2}}\right).
\end{align*}
Rearranging terms and raising both sides to the power of $k$, we obtain \eqref{eq:high-order-partial-Tal-KKL-Boolean} with the desired constant $C^{\text{\rm\ref{thm:high-order-Lp-Lq-inf-Boolean}}}(q)=\left(1+\frac{q}{2\upe(2-q)}\right)^{-1}\gtrsim 2-q$, which completes the proof.
\end{proof}

The Boolean high-order $\Leb^p$--$\Leb^q$ energy inequality (Theorem~\ref{thm:high-order-Lp-Lq-energy-Boolean}) follows similarly from the total master bound in Lemma~\ref{lem:high-order-energy-var-Boolean}:

\begin{proof}[Proof of Theorem \ref{thm:high-order-Lp-Lq-energy-Boolean}.]
Applying Lemma~\ref{lem:high-order-energy-var-Boolean} with $\left\|f\right\|_{\Leb^\infty}\leq 1$ yields
\begin{equation*}
    \left\|\left|\grad^k f\right|\right\|_{\Leb^p}^{p} \geq M(p)^k\Vop^{k}(f) \left[1+ \frac{q}{2k(2-q)}\ln^+\left(\frac{\Vop^k(f)}{\left\|\left|\grad^k f\right|\right\|_{\Leb^q}^2}\right)\right]^\frac{kp}{2}.
\end{equation*}
Combining this with the bound $\Vop^{k}(f)\geq \Wop^{\geq k}(f)$ from Proposition~\ref{prop:high-order-var-bound-Boolean}, we obtain
\begin{equation*}
    \left\|\left|\grad^k f\right|\right\|_{\Leb^p}^{p} \geq M(p)^k\Wop^{\geq k}(f) \left[1+ \frac{q}{2k(2-q)}\ln^+\left(\frac{\Wop^{\geq k}(f)}{\left\|\left|\grad^k f\right|\right\|_{\Leb^q}^2}\right)\right]^\frac{kp}{2}.
\end{equation*}
Assuming w.l.o.g. $k\geq 1$ and $\Wop^{\geq k}(f) >0$, we rewrite the expression as
\begin{equation*}
    M(p)^{-\frac{2}{p}}\left(\frac{\left\|\left|\grad^k f\right|\right\|_{\Leb^p}^{p}}{\Wop^{\geq k}(f)}\right)^{\frac{2}{kp}} \geq 1+\frac{q}{2k(2-q)} \ln^{+}\left(\frac{\Wop^{\geq k}(f)}{\left\|\left|\grad^k f\right|\right\|_{\Leb^q}^2}\right).
\end{equation*}
By leveraging the facts $\upe^{-1} x\geq \ln^+ (x)$ and $\ln^+(x)+\ln^+(y)\geq \ln^+(xy)$, we deduce that
\begin{align*}
    &M(p)^{-\frac{2}{p}}\left(\frac{\left\|\left|\grad^k f\right|\right\|_{\Leb^p}^{p}}{\Wop^{\geq k}(f)}\right)^{\frac{2}{kp}} + \frac{q}{2k(2-q)}\, \frac{kp}{2} \, \upe^{-1} \left(\frac{\left\|\left|\grad^k f\right|\right\|_{\Leb^p}^{p}}{\Wop^{\geq k}(f)}\right)^{\frac{2}{kp}}\\
    &\geq 1+\frac{q}{2k(2-q)} \ln^{+}\left(\frac{\Wop^{\geq k}(f)}{\left\|\left|\grad^k f\right|\right\|_{\Leb^q}^2}\right) + \frac{q}{2k(2-q)}\, \frac{kp}{2} \ln^{+}\left[\left(\frac{\left\|\left|\grad^k f\right|\right\|_{\Leb^p}^{p}}{\Wop^{\geq k}(f)}\right)^{\frac{2}{kp}}\right]\\
    &\geq 1+\frac{q}{2k(2-q)} \ln^{+}\left(\frac{\left\|\left|\grad^k f\right|\right\|_{\Leb^p}^{p}}{\left\|\left|\grad^k f\right|\right\|_{\Leb^q}^{2}}\right).
\end{align*}
Rearranging terms and raising both sides to the power of $kp/2$ yields the desired bound
\begin{equation*}
    \frac{\left\|\left|\grad^k f\right|\right\|_{\Leb^p}^{p}}{\big[1+\frac{q}{2k(2-q)} \ln^{+}(\left\|\left|\grad^k f\right|\right\|_{\Leb^p}^{p}/\left\|\left|\grad^k f\right|\right\|_{\Leb^q}^{2})\big]^{kp/2}} \geq C^{\text{\rm\ref{thm:high-order-Lp-Lq-energy-Boolean}}}(p,q)^k \Wop^{\geq k}(f),
\end{equation*}
with constant $C^{\text{\rm\ref{thm:high-order-Lp-Lq-energy-Boolean}}}(p,q) = \left(M(p)^{-\frac{2}{p}}+\frac{pq}{4\upe(2-q)}\right)^{-\frac{p}{2}} \gtrsim 2-q$.
\end{proof}

In the Gaussian setting, by invoking the energy-variance inequalities from Lemma~\ref{lem:high-order-energy-var-Gaussian}, we prove the $\Leb^p$--$\Leb^q$ energy and influence inequalities in Theorem~\ref{thm:high-order-Lp-Lq-Gaussian}:

\begin{proof}[Proof of Theorem \ref{thm:high-order-Lp-Lq-Gaussian}.]
Following the identical algebraic procedure detailed in the proof of Theorem~\ref{thm:high-order-Lp-Lq-energy-Boolean}, we deduce the Gaussian energy inequality \eqref{eq:high-order-Lp-Lq-energy-Gaussian} directly from Lemma~\ref{lem:high-order-energy-var-Gaussian}~\eqref{eq:high-order-energy-var-Gaussian}. The proof of the Gaussian influence inequality \eqref{eq:high-order-Lp-Lq-inf-Gaussian} from Lemma~\ref{lem:high-order-energy-var-Gaussian}~\eqref{eq:high-order-inf-var-Gaussian} follows similarly with minor modifications: we first prove the partial bound for all $\kappa\in \Nbb^n$ with $|\kappa|=k$:
\begin{equation}\label{eq:high-order-partial-Tal-KKL-Gaussian}
    \frac{\left\|\Dop^\kappa f\right\|_{\Leb^p}^{p}}{\big[1+\frac{q}{2k(2-q)} \ln^{+}(\left\|\Dop^\kappa f\right\|_{\Leb^p}^{p}/\left\|\Dop^\kappa f\right\|_{\Leb^q}^{2})\big]^{kp/2}} \geq C^{\text{\rm\ref{thm:high-order-Lp-Lq-Gaussian}}}(p,q)^k \Vop^\kappa(f),
\end{equation}
inequality \eqref{eq:high-order-Lp-Lq-inf-Gaussian} then follows by summing \eqref{eq:high-order-partial-Tal-KKL-Gaussian} over all $\kappa\in \Nbb^n$ with $|\kappa|=k$, combined with the bound $\sum_{\kappa:|\kappa|=k} \Vop^\kappa(f) \geq \Wop^{\geq k}(f)$ from Proposition~\ref{prop:high-order-var-bound-Gaussian}. Assume w.l.o.g. $k\geq 1$ and $\Vop^\kappa(f)>0$.  To establish \eqref{eq:high-order-partial-Tal-KKL-Gaussian}, applying Lemma~\ref{lem:high-order-energy-var-Gaussian}~\eqref{eq:high-order-inf-var-Gaussian} with $\left\|f\right\|_{\Leb^\infty}\leq 1$ yields
\begin{equation*}
    \left\|\Dop^\kappa f\right\|_{\Leb^p}^{p} \geq M(p)^k \Vop^\kappa(f) \left[1+\frac{q}{2k(2-q)} \ln^{+}\left(\frac{\Vop^\kappa(f)}{\left\|\Dop^\kappa f\right\|_{\Leb^q}^{2}}\right)\right]^\frac{kp}{2}.
\end{equation*}
Rewriting this inequality yields
\begin{equation*}
    M(p)^{-\frac{2}{p}}\left(\frac{\left\|\Dop^\kappa f\right\|_{\Leb^p}^{p}}{\Vop^\kappa(f)}\right)^{\frac{2}{kp}} \geq 1+\frac{q}{2k(2-q)} \ln^{+}\left(\frac{\Vop^\kappa(f)}{\left\|\Dop^\kappa f\right\|_{\Leb^q}^{2}}\right).
\end{equation*}
By similarly applying $\upe^{-1} x\geq \ln^+ (x)$ and $\ln^+(x)+\ln^+(y)\geq \ln^+(xy)$, we see that
\begin{align*}
    &M(p)^{-\frac{2}{p}}\left(\frac{\left\|\Dop^\kappa f\right\|_{\Leb^p}^{p}}{\Vop^\kappa(f)}\right)^{\frac{2}{kp}}  + \frac{q}{2k(2-q)}\, \frac{kp}{2} \, \upe^{-1} \left(\frac{\left\|\Dop^\kappa f\right\|_{\Leb^p}^{p}}{\Vop^\kappa(f)}\right)^{\frac{2}{kp}} \\
    &\geq 1+\frac{q}{2k(2-q)} \ln^{+}\left(\frac{\Vop^\kappa(f)}{\left\|\Dop^\kappa f\right\|_{\Leb^q}^{2}}\right) + \frac{q}{2k(2-q)}\, \frac{kp}{2} \ln^{+}\left[\left(\frac{\left\|\Dop^\kappa f\right\|_{\Leb^p}^{p}}{\Vop^\kappa(f)}\right)^{\frac{2}{kp}} \right]\\
    &\geq 1+\frac{q}{2k(2-q)} \ln^{+}\left(\frac{\left\|\Dop^\kappa f\right\|_{\Leb^p}^{p}}{\left\|\Dop^\kappa f\right\|_{\Leb^q}^{2}}\right).
\end{align*}
A similar rearrangement recovers \eqref{eq:high-order-partial-Tal-KKL-Gaussian} with the desired identical constant
\begin{equation*}
    C^{\text{\rm\ref{thm:high-order-Lp-Lq-Gaussian}}}(p,q) = \left(M(p)^{-\frac{2}{p}}+\frac{pq}{4\upe(2-q)}\right)^{-\frac{p}{2}}= C^{\text{\rm\ref{thm:high-order-Lp-Lq-energy-Boolean}}}(p,q),
\end{equation*}
which completes the proof.
\end{proof}

\subsection{High-order isoperimetric-type inequalities}\label{subsec:high-order-iso-proof}

Next, we derive the high-order isoperimetric-type inequalities (Theorems \ref{thm:high-order-Tal-isoper-Boolean}, \ref{thm:high-order-partial-isoper-Boolean} and \ref{thm:high-order-iso-Gaussian}) from the high-order $\Leb^p$--$\Leb^q$ inequalities, followed by the consequent KKL-type bounds.

Using the Boolean energy inequality (Theorem \ref{thm:high-order-Lp-Lq-energy-Boolean}), we first deduce Theorem \ref{thm:high-order-Tal-isoper-Boolean}:
\begin{proof}[Proof of Theorem \ref{thm:high-order-Tal-isoper-Boolean}.]
Setting $q=p$ in Theorem \ref{thm:high-order-Lp-Lq-energy-Boolean} yields
\begin{equation*}
    \frac{\left\|\left|\grad^k f\right|\right\|_{\Leb^p}^{p}}{\big[1+\frac{1}{2k} \ln^{+}(1/\left\|\left|\grad^k f\right|\right\|_{\Leb^p}^{p})\big]^{kp/2}} \geq C^{\text{\rm\ref{thm:high-order-Lp-Lq-energy-Boolean}}}(p,p)^k \Wop^{\geq k}(f).
\end{equation*}
Assuming w.l.o.g. $k\geq 1$ and $\Wop^{\geq k}(f)>0$, we rewrite this inequality as
\begin{equation*}
    C^{\text{\rm\ref{thm:high-order-Lp-Lq-energy-Boolean}}}(p,p)^{-\frac{2}{p}}\left(\frac{\left\|\left|\grad^k f\right|\right\|_{\Leb^p}^{p}}{\Wop^{\geq k}(f)}\right)^{\frac{2}{kp}} \geq 1+ \frac{1}{2k}\ln^+\left(\frac{1}{\left\|\left|\grad^k f\right|\right\|_{\Leb^p}^p}\right).
\end{equation*}
Reapplying $\upe^{-1} x\geq \ln^+ (x)$ and $\ln^+(x)+\ln^+(y)\geq \ln^+(xy)$ further yields
\begin{align*}
    &C^{\text{\rm\ref{thm:high-order-Lp-Lq-energy-Boolean}}}(p,p)^{-\frac{2}{p}}\left(\frac{\left\|\left|\grad^k f\right|\right\|_{\Leb^p}^{p}}{\Wop^{\geq k}(f)}\right)^{\frac{2}{kp}} + \frac{1}{2k}\,\frac{kp}{2}\,\upe^{-1} \left(\frac{\left\|\left|\grad^k f\right|\right\|_{\Leb^p}^{p}}{\Wop^{\geq k}(f)}\right)^{\frac{2}{kp}} \\
    &\geq 1+ \frac{1}{2k}\ln^+\left(\frac{1}{\left\|\left|\grad^k f\right|\right\|_{\Leb^p}^p}\right) + \frac{1}{2k}\,\frac{kp}{2}\ln^+\left[\left(\frac{\left\|\left|\grad^k f\right|\right\|_{\Leb^p}^{p}}{\Wop^{\geq k}(f)}\right)^{\frac{2}{kp}}\right]\\
    &\geq 1 + \frac{1}{2k} \ln^+\left(\frac{1}{\Wop^{\geq k}(f)}\right) = 1 + \frac{1}{2k} \ln\left(\frac{1}{\Wop^{\geq k}(f)}\right).
\end{align*}
Rearranging terms and raising both sides to the power of $\frac{kp}{2}$ yields
\begin{equation*}
    \left\|\left|\grad^k f\right|\right\|_{\Leb^p}^{p} \geq C^{\text{\rm\ref{thm:high-order-Tal-isoper-Boolean}}}(p)^k\Wop^{\geq k}(f) \left[1+ \frac{1}{2k}\ln\left(\frac{1}{\Wop^{\geq k}(f)}\right)\right]^\frac{kp}{2},
\end{equation*}
with the explicit constant
\begin{equation*}
    C^{\text{\rm\ref{thm:high-order-Tal-isoper-Boolean}}}(p) = \left(C^{\text{\rm\ref{thm:high-order-Lp-Lq-energy-Boolean}}}(p,p)^{-\frac{2}{p}}+\frac{p}{4\upe}\right)^{-\frac{p}{2}} = \left(M(p)^{-\frac{2}{p}}+\frac{p}{2\upe(2-p)}\right)^{-\frac{p}{2}} \gtrsim 2-p
\end{equation*}
as desired.
\end{proof}

Theorem \ref{thm:high-order-partial-isoper-Boolean} follows similarly from the Boolean influence inequality (Theorem \ref{thm:high-order-Lp-Lq-inf-Boolean}):
\begin{proof}[Proof of Theorem \ref{thm:high-order-partial-isoper-Boolean}.]
Setting $q=p$ in Theorem \ref{thm:high-order-Lp-Lq-inf-Boolean} yields
\begin{equation*}
    \sum_{J:|J|=k} \frac{\left\|\Dop_J f\right\|_{\Leb^p}^{p}}{\big[1+\frac{1}{2k} \ln^{+}(1/\left\|\Dop_J f\right\|_{\Leb^p}^{p})\big]^{k}} \geq C^{\text{\rm\ref{thm:high-order-Lp-Lq-inf-Boolean}}}(p)^k \Wop^{\geq k}(f).
\end{equation*}
Let $J_{*}:=\argmax_{J:|J|=k} \left\|\Dop_J f\right\|_{\Leb^p}^{p}$. Direct bounding by the maximal influence yields
\begin{equation*}
    \binom{n}{k}\,\frac{\left\|\Dop_{J_{*}} f\right\|_{\Leb^p}^{p}}{\big[1+\frac{1}{2k} \ln^{+}(1/\left\|\Dop_{J_{*}} f\right\|_{\Leb^p}^{p})\big]^{k}}  \geq C^{\text{\rm\ref{thm:high-order-Lp-Lq-inf-Boolean}}}(p)^k \Wop^{\geq k}(f).
\end{equation*}
Assuming w.l.o.g. $k\geq 1$ and $\Wop^{\geq k}(f)>0$, we rewrite this inequality as
\begin{equation*}
    C^{\text{\rm\ref{thm:high-order-Lp-Lq-inf-Boolean}}}(p)^{-1}\left(\frac{\binom{n}{k}\left\|\Dop_{J_*} f\right\|_{\Leb^p}^{2}}{\Wop^{\geq k}(f)}\right)^{\frac{1}{k}} \geq 1+\frac{1}{2k} \ln^{+}\left(\frac{1}{\left\|\Dop_{J_*} f\right\|_{\Leb^p}^{p}}\right).
\end{equation*}
The facts $\upe^{-1} x\geq \ln^+ (x)$ and $\ln^+(x)+\ln^+(y)\geq \ln^+(xy)$ similarly yield
\begin{align*}
    &C^{\text{\rm\ref{thm:high-order-Lp-Lq-inf-Boolean}}}(p)^{-1}\left(\frac{\binom{n}{k}\left\|\Dop_{J_*} f\right\|_{\Leb^p}^{p}}{\Wop^{\geq k}(f)}\right)^{\frac{1}{k}} +\frac{1}{2k}\, k\, \upe^{-1} \left(\frac{\binom{n}{k}\left\|\Dop_{J_*} f\right\|_{\Leb^p}^{p}}{\Wop^{\geq k}(f)}\right)^{\frac{1}{k}}\\
    &\geq 1+\frac{1}{2k} \ln^{+}\left(\frac{1}{\left\|\Dop_{J_*} f\right\|_{\Leb^p}^{p}}\right) + \frac{1}{2k}\, k \ln^{+}\left[\left(\frac{\binom{n}{k}\left\|\Dop_{J_*} f\right\|_{\Leb^p}^{p}}{\Wop^{\geq k}(f)}\right)^{\frac{1}{k}}\right]\\
    &\geq 1+\frac{1}{2k} \ln^{+}\left(\frac{\binom{n}{k}}{\Wop^{\geq k}(f)}\right) = 1+\frac{1}{2k} \ln\left(\frac{\binom{n}{k}}{\Wop^{\geq k}(f)}\right).
\end{align*}
Isolating the maximal influence and raising both sides to the power of $k$ yields
\begin{equation*}
    \max_{J\subseteq[n]:|J|=k}\left\|\Dop_J f\right\|_{\Leb^p}^{p} = \left\|\Dop_{J_*} f\right\|_{\Leb^p}^{p} \geq C^{\text{\rm\ref{thm:high-order-partial-isoper-Boolean}}}(p)^k\, \frac{\Wop^{\geq k}(f)}{\binom{n}{k}} \left[1+ \frac{1}{2k} \ln\left(\frac{\binom{n}{k}}{\Wop^{\geq k}(f)}\right)\right]^k,
\end{equation*}
with the explicit constant
\begin{equation*}
    C^{\text{\rm\ref{thm:high-order-partial-isoper-Boolean}}}(p) = \left(C^{\text{\rm\ref{thm:high-order-Lp-Lq-inf-Boolean}}}(p)^{-1}+\frac{1}{2\upe}\right)^{-1} =  \left(1+\frac{1}{\upe(2-p)}\right)^{-1} \gtrsim 2-p
\end{equation*}
as desired.
\end{proof}

As noted in Section~\ref{subsec:results}, Theorem \ref{thm:high-order-partial-isoper-Boolean} further yields high-order KKL inequalities:

\begin{proof}[Proof of high-order KKL inequalities.]
We first prove the high-order $\Leb^p$ KKL inequality \eqref{eq:high-order-Lp-KKL-Boolean} for bounded functions. Since $\left\|f\right\|_{\Leb^\infty}\leq 1$ implies $\Wop^{\geq k}(f)\leq 1$, Theorem \ref{thm:high-order-partial-isoper-Boolean} provides
\begin{equation*}
    \max_{J\subseteq[n]:|J|=k}\left\|\Dop_J f\right\|_{\Leb^p}^{p} \geq C^{\text{\rm\ref{thm:high-order-partial-isoper-Boolean}}}(p)^k\, \Wop^{\geq k}(f)\frac{\left[1+\frac{1}{2k} \ln\binom{n}{k}\right]^k}{\binom{n}{k}}
\end{equation*}
Assume w.l.o.g. $k\geq 1$. The standard bound $\left(n/k\right)^k\leq \binom{n}{k} \leq \left(\upe n/k\right)^k$ for $n\geq k$ yields
\begin{equation*}
    \frac{\left[1+\frac{1}{2k} \ln\binom{n}{k}\right]^k}{\binom{n}{k}} \geq \frac{\left[1+\frac{1}{2} \ln(n/k)\right]^k}{(\upe n/k)^k} = \left[\frac{k\left(1+\frac{1}{2} \ln(n/k)\right)}{\upe \ln(n)}\,\frac{\ln(n)}{n} \right]^k.
\end{equation*}
Next, observe that
\begin{equation*}
    \left(1+\frac{1}{2} \ln(n/k)\right)\left(2+\ln(k)\right) \geq \ln(n) \iff \frac{\left(1+\frac{1}{2} \ln(n/k)\right)}{\ln(n)} \geq \frac{1}{2+\ln(k)}.
\end{equation*}
Consequently, for every fixed $p\in [1,2)$, it follows that
\begin{equation*}
    C^{\text{\rm\ref{thm:high-order-partial-isoper-Boolean}}}(p)^k \frac{\left[1+\frac{1}{2k} \ln\binom{n}{k}\right]^k}{\binom{n}{k}} \geq \left(\frac{C^{\text{\rm\ref{thm:high-order-partial-isoper-Boolean}}}(p)\,k}{\upe \left(2+\ln(k)\right)}\, \right)^k \left(\frac{\ln(n)}{n}\right)^k \gtrsim_p \left(\frac{\ln(n)}{n}\right)^k,
\end{equation*}
Combining these bounds yields
\begin{equation*}
    \max_{J\subseteq[n]:|J|=k}\left\|\Dop_J f\right\|_{\Leb^p}^{p} \gtrsim_p \Wop^{\geq k}(f) \left(\frac{\ln(n)}{n}\right)^k,
\end{equation*}
which completes the proof of the high-order $\Leb^p$ KKL inequality \eqref{eq:high-order-Lp-KKL-Boolean}.

When restricted to $\cube$-valued $f$, recall that we have $\left\|\Dop_J f\right\|_{\Leb^1} \leq 2^{|J|-1} \Inf_J(f)$. Setting $p=1$ in Theorem \ref{thm:high-order-partial-isoper-Boolean} yields
\begin{equation*}
    \max_{J\subseteq[n]:|J|=k}\Inf_J(f) \geq \left(\frac{C^{\text{\rm\ref{thm:high-order-partial-isoper-Boolean}}}(1)}{2}\right)^k\, \frac{\Wop^{\geq k}(f)}{\binom{n}{k}} \left[1+ \frac{1}{2k} \ln\left(\frac{\binom{n}{k}}{\Wop^{\geq k}(f)}\right)\right]^k.
\end{equation*}
Following an analogous algebraic reduction, we deduce that
\begin{equation*}
    \max_{J\subseteq[n]:|J|=k}\Inf_J(f) \gtrsim \Wop^{\geq k}(f)\left(\frac{\ln(n)}{n}\right)^k,
\end{equation*}
thereby recovering the high-order KKL inequality in \cite{P2025}.
\end{proof}

In the Gaussian setting, by leveraging the $\Leb^p$--$\Leb^q$ energy and influence inequalities in Theorem~\ref{thm:high-order-Lp-Lq-Gaussian}, we similarly prove Theorem~\ref{thm:high-order-iso-Gaussian}:

\begin{proof}[Proof of Theorem \ref{thm:high-order-iso-Gaussian}.]
Following the identical algebraic procedure detailed in the proof of Theorem~\ref{thm:high-order-Tal-isoper-Boolean}, we deduce the Gaussian isoperimetric-type inequality \eqref{eq:high-order-Lp-Lq-energy-Gaussian} directly from Theorem~\ref{thm:high-order-Lp-Lq-Gaussian}~\eqref{eq:high-order-Lp-Lq-energy-Gaussian}. The proof of the Gaussian KKL-type isoperimetric inequality \eqref{eq:high-order-partial-iso-Gaussian} from Theorem~\ref{thm:high-order-Lp-Lq-Gaussian}~\eqref{eq:high-order-Lp-Lq-inf-Gaussian} follows similarly with minor modifications: setting $q=p$ in \eqref{eq:high-order-Lp-Lq-inf-Gaussian} yields
\begin{equation*}
    \sum_{\kappa:|\kappa|=k} \frac{\left\|\Dop^\kappa f\right\|_{\Leb^p}^{p}}{\big[1+\frac{1}{2k} \ln^{+}(1/\left\|\Dop^\kappa f\right\|_{\Leb^p}^{p})\big]^{kp/2}} \geq C^{\text{\rm\ref{thm:high-order-Lp-Lq-Gaussian}}}(p,p)^k \Wop^{\geq k}(f).
\end{equation*}

Using the same discussion as in Theorem \ref{thm:high-order-Tal-isoper-Boolean}, we can deduce \eqref{eq:high-order-iso-Gaussian} from \eqref{eq:high-order-Lp-Lq-energy-Gaussian} in Theorem \ref{thm:high-order-Lp-Lq-Gaussian}. Similarly, applying \eqref{eq:high-order-Lp-Lq-inf-Gaussian} in Theorem \ref{thm:high-order-Lp-Lq-Gaussian} with $q=p$ yields
\begin{equation*}
    \sum_{\kappa:|\kappa|=k} \frac{\left\|\Dop^\kappa f\right\|_{\Leb^p}^{p}}{\big[1+\frac{1}{2k} \ln^{+}(1/\left\|\Dop^\kappa f\right\|_{\Leb^p}^{p})\big]^{kp/2}} \geq C^{\text{\rm\ref{thm:high-order-Lp-Lq-Gaussian}}}(p,p)^k \Wop^{\geq k}(f).
\end{equation*}
Let $\kappa_{*}:=\argmax_{\kappa:|\kappa|=k} \left\|\Dop^\kappa f\right\|_{\Leb^p}^{p}$. Direct bounding by the maximal influence yields
\begin{equation*}
    \binom{n+k-1}{k}\,\frac{\left\|\Dop^{\kappa_*} f\right\|_{\Leb^p}^{p}}{\big[1+\frac{1}{2k} \ln^{+}(1/\left\|\Dop^{\kappa_*} f\right\|_{\Leb^p}^{p})\big]^{kp/2}} \geq C^{\text{\rm\ref{thm:high-order-Lp-Lq-Gaussian}}}(p,p)^k \Wop^{\geq k}(f).
\end{equation*}
Assuming w.l.o.g. $k\geq 1$ and $\Wop^{\geq k}(f)>0$, we rewrite this inequality as
\begin{equation*}
    C^{\text{\rm\ref{thm:high-order-Lp-Lq-Gaussian}}}(p,p)^{-\frac{2}{p}}\left(\frac{\binom{n+k-1}{k}\left\|\Dop^{\kappa_*} f\right\|_{\Leb^p}^{p}}{\Wop^{\geq k}(f)}\right)^{\frac{2}{kp}} \geq 1+\frac{1}{2k} \ln^{+}\left(\frac{1}{\left\|\Dop^{\kappa_*} f\right\|_{\Leb^p}^{2}}\right).
\end{equation*}
Similarly leveraging $\upe^{-1} x\geq \ln^+ (x)$ and $\ln^+(x)+\ln^+(y)\geq \ln^+(xy)$ yields
\begin{align*}
    &C^{\text{\rm\ref{thm:high-order-Lp-Lq-Gaussian}}}(p,p)^{-\frac{2}{p}}\left(\frac{\binom{n+k-1}{k}\left\|\Dop^{\kappa_*} f\right\|_{\Leb^p}^{p}}{\Wop^{\geq k}(f)}\right)^{\frac{2}{kp}}  + \frac{1}{2k}\, \frac{kp}{2}\, \upe^{-1} \left(\frac{\binom{n+k-1}{k}\left\|\Dop^{\kappa_*} f\right\|_{\Leb^p}^{p}}{\Wop^{\geq k}(f)}\right)^{\frac{2}{kp}} \\
    &\geq 1+\frac{1}{2k} \ln^{+}\left(\frac{1}{\left\|\Dop^{\kappa_*} f\right\|_{\Leb^p}^{2}}\right)+ \frac{1}{2k}\, \frac{kp}{2} \ln^{+}\left[\left(\frac{\binom{n+k-1}{k}\left\|\Dop^{\kappa_*} f\right\|_{\Leb^p}^{p}}{\Wop^{\geq k}(f)}\right)^{\frac{2}{kp}}\right]\\
    &\geq 1+\frac{1}{2k} \ln^{+}\left(\frac{\binom{n+k-1}{k}}{\Wop^{\geq k}(f)}\right) = 1+\frac{1}{2k} \ln\left(\frac{\binom{n+k-1}{k}}{\Wop^{\geq k}(f)}\right).
\end{align*}
Isolating the maximal influence and raising both sides to the power of $kp/2$ yields
\begin{equation*}
    \max_{\kappa\in\Nbb^n:|\kappa|=k}\left\|\Dop^\kappa f\right\|_{\Leb^p}^{p} = \left\|\Dop^{\kappa_*} f\right\|_{\Leb^p}^{p} \geq C^{\text{\rm\ref{thm:high-order-iso-Gaussian}}}(p)^k\, \frac{\Wop^{\geq k}(f)}{\binom{n+k-1}{k}} \left[1+ \frac{1}{2k} \ln\left(\frac{\binom{n+k-1}{k}}{\Wop^{\geq k}(f)}\right)\right]^\frac{kp}{2},
\end{equation*}
with the desired identical explicit constant
\begin{equation*}
    C^{\text{\rm\ref{thm:high-order-iso-Gaussian}}}(p)= \left(C^{\text{\rm\ref{thm:high-order-Lp-Lq-Gaussian}}}(p,p)^{-\frac{2}{p}}+\frac{p}{4\upe}\right)^{-\frac{p}{2}} = \left(M(p)^{-\frac{2}{p}}+\frac{p}{2\upe(2-p)}\right)^{-\frac{p}{2}} = C^{\text{\rm\ref{thm:high-order-Tal-isoper-Boolean}}}(p),
\end{equation*}
which completes the proof of \eqref{eq:high-order-partial-iso-Gaussian}.
\end{proof}

Theorem~\ref{thm:high-order-iso-Gaussian}~\eqref{eq:high-order-partial-iso-Gaussian} also yields a Gaussian high-order $\Leb^p$ KKL inequality \eqref{eq:high-order-Lp-KKL-Gaussian}:

\begin{proof}[Proof of the Gaussian high-order $\Leb^p$ KKL inequality \eqref{eq:high-order-Lp-KKL-Gaussian}.]
Theorem~\ref{thm:high-order-iso-Gaussian}~\eqref{eq:high-order-partial-iso-Gaussian} yields
\begin{equation*}
    \max_{\kappa\in\Nbb^n:|\kappa|=k}\left\|\Dop^\kappa f\right\|_{\Leb^p}^{p} \geq C^{\text{\rm\ref{thm:high-order-iso-Gaussian}}}(p)^k\, \Wop^{\geq k}(f) \frac{\left[1+ \frac{1}{2k} \ln \binom{n+k-1}{k}\right]^{kp/2}}{\binom{n+k-1}{k}}.
\end{equation*}
Assume w.l.o.g. $k\geq 1$ and write $m:=n+k-1\geq k$. Similarly invoking the standard bound $\left(m/k\right)^k\leq \binom{m}{k} \leq \left(\upe m/k\right)^k$ and the observation $\frac{\left(1+\frac{1}{2} \ln(m/k)\right)}{\ln(m)} \geq \frac{1}{2+\ln(k)}$, we obtain
\begin{equation*}
    C^{\text{\rm\ref{thm:high-order-iso-Gaussian}}}(p)^k\frac{\left[1+ \frac{1}{2k} \ln \binom{m}{k}\right]^{kp/2}}{\binom{m}{k}} \geq \left[\frac{C^{\text{\rm\ref{thm:high-order-iso-Gaussian}}}(p)\,k}{\upe \left(2+\ln(k)\right)^{p/2}}\, \right]^k \left(\frac{\left(\ln(m)\right)^{\frac{p}{2}}}{m} \right)^k \gtrsim_p \left(\frac{\left(\ln(m)\right)^{\frac{p}{2}}}{m} \right)^k.
\end{equation*}
Combining these bounds yields
\begin{equation*}
    \max_{\kappa\in\Nbb^n:|\kappa|=k}\left\|\Dop^\kappa f\right\|_{\Leb^p}^{p} \gtrsim_p \Wop^{\geq k}(f) \left(\frac{\left(\ln(m)\right)^{\frac{p}{2}}}{m} \right)^k,
\end{equation*}
which completes the proof.
\end{proof}

\subsection{High-order Falik--Samorodnitsky \& Eldan--Gross inequalities}\label{subsec:high-order-FS-EG-proof}

Finally, by combining the master high-order energy--variance inequalities with the high-order isoperimetric-type bounds, we prove the high-order Falik--Samorodnitsky \& Eldan--Gross inequalities (Theorems \ref{thm:high-order-Eldan-Gross-Boolean} and \ref{thm:high-order-Eldan-Gross-Gaussian}).

\begin{proof}[Proof of Theorem \ref{thm:high-order-Eldan-Gross-Boolean}.]
Applying Lemma~\ref{lem:high-order-energy-var-Boolean} with $1\leq q\leq p\leq 2$ in conjunction with the bound $\Vop^k(f)\geq \Wop^{\geq k}(f)$ from Proposition~\ref{prop:high-order-var-bound-Boolean}, we obtain
\begin{equation*}
    \left\|f\right\|_{\Leb^\infty}^{2-p}\left\|\left|\grad^k f\right|\right\|_{\Leb^p}^{p} \geq M(p)^k \Wop^{\geq k}(f) \left[1+ \frac{q}{2k(2-q)}\ln^+\left(\frac{\Wop^{\geq k}(f)}{\sum_{J:|J|=k} \left\|\Dop_J f\right\|_{\Leb^q}^2}\right)\right]^\frac{kp}{2}.
\end{equation*}
Recall that $M(p) = p \, 2^{-\frac{p}{2}} \max_{x\geq 0} \frac{1-\upe^{-x}}{x^{p/2}}$, which satisfies $M(2)=1$. Setting $p=2$, we immediately obtain the high-order Falik--Samorodnitsky inequality \eqref{eq:high-order-Falik-Samorodnitsky-Boolean}:
\begin{equation*}
    \left\|\left|\grad^k f\right|\right\|_{\Leb^2}^{2} \geq \Wop^{\geq k}(f) \left[1+ \frac{q}{2k(2-q)}\ln^+\left(\frac{\Wop^{\geq k}(f)}{\sum_{J:|J|=k} \left\|\Dop_J f\right\|_{\Leb^q}^2}\right)\right]^k.
\end{equation*}
Concurrently, under the assumptions $\left\|f\right\|_{\Leb^\infty}\leq 1$ and $p<2$, we have the isoperimetric-type bound from Theorem~\ref{thm:high-order-Tal-isoper-Boolean}:
\begin{equation*}
    \left\|\left|\grad^k f\right|\right\|_{\Leb^p}^{p} \geq C^{\text{\rm\ref{thm:high-order-Tal-isoper-Boolean}}}(p)^k\Wop^{\geq k}(f) \left[1+ \frac{1}{2k}\ln\left(\frac{1}{\Wop^{\geq k}(f)}\right)\right]^\frac{kp}{2}.
\end{equation*}
Assume w.l.o.g. $\Wop^{\geq k}(f)>0$ and $k\geq 1$. Rewriting these two inequalities and taking a linear combination of them weighted by $1$ and $\frac{q}{2-q}$, respectively, we deduce
\begin{align*}
    &M(p)^{-\frac{2}{p}}\left(\frac{\left\|\left|\grad^k f\right|\right\|_{\Leb^p}^{p}}{\Wop^{\geq k}(f)}\right)^{\frac{2}{kp}} + \frac{q}{2-q} \, C^{\text{\rm\ref{thm:high-order-Tal-isoper-Boolean}}}(p)^{-\frac{2}{p}} \left(\frac{\left\|\left|\grad^k f\right|\right\|_{\Leb^p}^{p}}{\Wop^{\geq k}(f)}\right)^{\frac{2}{kp}}\\
    &\geq 1+ \frac{q}{2k(2-q)}\ln^+\left(\frac{\Wop^{\geq k}(f)}{\sum_{J:|J|=k} \left\|\Dop_J f\right\|_{\Leb^q}^2}\right) + \frac{q}{2k(2-q)} \ln\left(\frac{1}{\Wop^{\geq k}(f)}\right)\\
    &\geq 1+ \frac{q}{2k(2-q)}\ln^+\left(\frac{1}{\sum_{J:|J|=k} \left\|\Dop_J f\right\|_{\Leb^q}^2}\right).
\end{align*}
Rearranging terms and raising both sides to the power of $\frac{kp}{2}$ yields
\begin{equation*}
    \left\|\left|\grad^k f\right|\right\|_{\Leb^p}^{p} \geq C^{\text{\rm\ref{thm:high-order-Eldan-Gross-Boolean}}}(p,q)^k\Wop^{\geq k}(f) \left[1 + \frac{q}{2k(2-q)}\ln^+\left(\frac{1}{\sum_{J:|J|=k} \left\|\Dop_J f\right\|_{\Leb^q}^2} \right)\right]^\frac{kp}{2},
\end{equation*}
with the desired explicit constant
\begin{align*}
    C^{\text{\rm\ref{thm:high-order-Eldan-Gross-Boolean}}}(p,q) &=\left(M(p)^{-\frac{2}{p}} + \frac{q}{2-q}\,C^{\text{\rm\ref{thm:high-order-Tal-isoper-Boolean}}}(p)^{-\frac{2}{p}}\right)^{-\frac{p}{2}} \\
    &= \left(\frac{2}{2-q}\,M(p)^{-\frac{2}{p}}+\frac{pq}{2\upe(2-p)(2-q)}\right)^{-\frac{p}{2}} \gtrsim \left(2-p\right)\left(2-q\right),
\end{align*}
which proves the high-order Eldan--Gross inequality \eqref{eq:high-order-Eldan-Gross-Boolean}.
\end{proof}

\begin{proof}[Proof of Theorem \ref{thm:high-order-Eldan-Gross-Gaussian}.]
The proof proceeds mutatis mutandis along the same lines as the proof of Theorem~\ref{thm:high-order-Eldan-Gross-Boolean}. Substituting the Gaussian total energy--variance inequality \eqref{eq:high-order-iso-Gaussian} from Lemma~\ref{lem:high-order-energy-var-Gaussian} and the isoperimetric-type bound \eqref{eq:high-order-iso-Gaussian} from Theorem~\ref{thm:high-order-iso-Gaussian} into the same algebraic procedure, we obtain the desired Gaussian high-order Falik--Samorodnitsky \& Eldan--Gross inequalities with the identical constant $C^{\text{\rm\ref{thm:high-order-Eldan-Gross-Gaussian}}}(p,q)=C^{\text{\rm\ref{thm:high-order-Eldan-Gross-Boolean}}}(p,q)$.
\end{proof}

\appendix

\section{Useful estimates and analytic facts}

\begin{lemma}\label{lem:phi_max_location}
Let $a\in[1/2,1]$. Then
\begin{equation*}
    \argmax_{x\geq 0} \frac{1-\upe^{-x}}{x^{a}} \leq \frac{2(1-a)}{a}.
\end{equation*}
\end{lemma}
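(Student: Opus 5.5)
We need to prove that for $a\in[1/2,1]$ the maximizer of $\phi(x)=(1-\upe^{-x})/x^a$ on $[0,\infty)$ lies in $[0,2(1-a)/a]$. The plan is a first-order analysis. For $a=1$ the function $(1-\upe^{-x})/x$ is strictly decreasing, with its supremum approached at $x\to 0^+$, so the argmax is $0=2(1-a)/a$ and the claim holds. Hence assume $a\in[1/2,1)$. Then $\phi(0^+)=0$ (since $1-\upe^{-x}\sim x$ and $x^{1-a}\to 0$) and $\phi(x)\to 0$ as $x\to\infty$, so a positive interior maximum exists. It therefore suffices to show that $\phi'(x)<0$ for all $x> x_0:=2(1-a)/a$.

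Compute
\[
\phi'(x)=x^{-a-1}\bigl(x\upe^{-x}-a(1-\upe^{-x})\bigr),
\]
so the sign of $\phi'$ is the sign of
\[
g(x):=x\upe^{-x}-a(1-\upe^{-x}),
\]
equivalently of $h(x):=x-a(\upe^{x}-1)$. Now $h$ is concave with $h(0)=0$ and $h'(0)=1-a>0$, so $h$ is positive on an interval $(0,x^*)$ and negative after its unique positive root $x^*$. Thus the unique maximizer of $\phi$ is $x^*$, and it suffices to show $h(x_0)\le 0$, i.e.
\[
\upe^{x_0}-1\ge x_0/a .
\]

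To verify this, use $\upe^{x}-1\ge x+x^2/2$. Then
\[
x_0+\frac{x_0^2}{2}\ge \frac{x_0}{a}
\iff 1+\frac{x_0}{2}\ge \frac{1}{a}
\iff \frac{x_0}{2}\ge \frac{1-a}{a},
\]
which holds with equality at $x_0=2(1-a)/a$. The inequality $\upe^{x}-1>x+x^2/2$ is strict for $x>0$, so $h(x_0)<0$ when $a<1$. Consequently $x^*<x_0$, which proves the lemma.

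The restriction $a\ge 1/2$ does not seem to be needed for this argument. There is no real obstacle here; the only point requiring care is uniqueness of the root, which the concavity of $h$ settles.
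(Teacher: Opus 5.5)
Your proof is correct and takes essentially the same route as the paper: the same sign factor $x-a(\upe^{x}-1)$ of the derivative, the same monotonicity argument for $a=1$, and the same key inequality $\upe^{x}-1\ge x+\frac{x^{2}}{2}$, which the paper applies at the critical point $x_*$ while you apply it at $x_0=2(1-a)/a$. Your concavity argument for $h$ also justifies the uniqueness of the critical point, which the paper simply asserts, and you are right that the hypothesis $a\ge 1/2$ is not used.
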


\begin{proof}
Write $\varphi_a(x):=\frac{1-\upe^{-x}}{x^{a}}$. Then for $x>0$,
\begin{equation*}
    \varphi_a^{\prime}(x) = x^{-a-1} \upe^{-x} \left( x- a\left(\upe^x-1\right)\right).
\end{equation*}
\begin{itemize}
    \item When $a=1$, for all $x>0$, note that $\upe^x-1\geq x$, we have
    \begin{equation*}
        \varphi_1^{\prime}(x) = x^{-a-1} \upe^{-x} \left( x- \left(\upe^x-1\right)\right)  \leq 0.
    \end{equation*}
    Hence $\varphi_1(x)$ is decreasing on $(0,\infty)$, and equivalently, maximized at $x=0 = \frac{2(1-a)}{a}$.
    \item When $a\in [1/2,1)$, $\varphi_a(x)$ is maximized by its unique critical point $x_*>0$, which satisfies
    \begin{equation*}
        \varphi_a^{\prime}(x_*)=0 \iff x_* = a\left(\upe^{x_*}-1\right).
    \end{equation*}
    Further using $\upe^{x}-1 \geq x+\frac{1}{2}x^2$ for $x>0$, we have
    \begin{equation*}
        x_* = a\left(\upe^{x_*}-1\right) \geq a\left(x_*+\frac{1}{2}x_*^2\right) \iff 0\leq x_* \leq \frac{2(1-a)}{a}.
    \end{equation*}
\end{itemize}
Hence, in both cases, the maximizer of $\varphi_a$ in $[0,\infty)$ is bounded by $\frac{2(1-a)}{a}$ as desired.
\end{proof}

\begin{lemma}\label{lem:beta_upper}
Let $a\in[1/2,1]$. Then for all $r\in[0,1]$,
\begin{equation*}
    \Beta\left(\frac{r}{1+r};a,a\right)\le \frac{r^a}{a}.
\end{equation*}
\end{lemma}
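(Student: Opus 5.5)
The argument compares integrands directly. Write $x=\frac{r}{1+r}\in[0,\tfrac12]$, so that
\begin{equation*}
\Beta\left(\tfrac{r}{1+r};a,a\right)=\int_0^{x} t^{a-1}(1-t)^{a-1}\,\dif t .
\end{equation*}
The target $r^a/a$ is exactly $\int_0^r u^{a-1}\,\dif u$. This suggests substituting $t=\frac{u}{1+u}$, which maps $u\in[0,r]$ onto $t\in[0,x]$. Under this substitution
\begin{equation*}
1-t=\frac{1}{1+u},\qquad \dif t=\frac{\dif u}{(1+u)^2},
\end{equation*}
and therefore
\begin{equation*}
t^{a-1}(1-t)^{a-1}\,\dif t=u^{a-1}(1+u)^{-(a-1)}(1+u)^{-(a-1)}(1+u)^{-2}\,\dif u=u^{a-1}(1+u)^{-2a}\,\dif u .
\end{equation*}

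It follows that
\begin{equation*}
\Beta\left(\tfrac{r}{1+r};a,a\right)=\int_0^r u^{a-1}(1+u)^{-2a}\,\dif u .
\end{equation*}
Since $a>0$ and $u\ge 0$, we have $(1+u)^{-2a}\le 1$. Hence the integral is at most $\int_0^r u^{a-1}\,\dif u=r^a/a$, which is the claim.

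The only point needing care is convergence at the endpoint $u=0$. Because $a>0$, the integrand $u^{a-1}$ is integrable there, so the change of variables is legitimate. The argument in fact works for every $r\ge 0$ and every $a>0$, so the restrictions $a\in[1/2,1]$ and $r\in[0,1]$ are not needed. There is no real obstacle beyond spotting the substitution $t=u/(1+u)$.
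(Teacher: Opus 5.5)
Your proof is correct and follows the paper's argument: the substitution $t=\frac{u}{1+u}$ turns the incomplete Beta integral into $\int_0^r u^{a-1}(1+u)^{-2a}\,\dif u$, and bounding $(1+u)^{-2a}\le 1$ gives $r^a/a$. Your remark that the restrictions $a\in[1/2,1]$ and $r\in[0,1]$ are not needed (any $a>0$ and $r\ge 0$ works) is also accurate.
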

\begin{proof}
Substituting $t=\frac{u}{1+u}$, we obtain
\begin{equation*}
    \Beta\left(\frac{r}{1+r};a,a\right) = \int_0^{\frac{r}{1+r}} t^{a-1}(1-t)^{a-1}\dif t = \int_0^r u^{a-1} \left(1+u\right)^{-2a}\dif u.
\end{equation*}
Note that for $u\in [0,r]$, $\left(1+u\right)^{-2a}\leq 1$. Hence we obtain
\begin{equation*}
    \Beta\left(\frac{r}{1+r};a,a\right) = \int_0^r u^{a-1} \left(1+u\right)^{-2a}\dif u \leq \int_0^r u^{a-1} \dif u = \frac{r^a}{a}.
\end{equation*}
as desired.
\end{proof}

\begin{lemma}
\label{lem:norm-log-convex}
For a fixed function $f$, the map $r\mapsto \left\|f\right\|_{\Leb^{1/r}}$ is logarithmically convex. More specifically, for $\lambda\in [0,1]$, $p_1, p_2 \in [1,\infty]$ and $\frac{1}{p_\lambda} = \frac{1-\lambda}{p_1} + \frac{\lambda}{p_2}$, we have
\begin{equation*}
    \left\|f\right\|_{\Leb^{p_\lambda}} \leq \left\|f\right\|_{\Leb^{p_1}}^{1-\lambda}  \left\|f\right\|_{\Leb^{p_2}}^{\lambda}.
\end{equation*}
\end{lemma}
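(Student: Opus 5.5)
The plan is to deduce this from the weighted H\"older inequality, treating the trivial cases separately. First, if $p_1=p_2$ then $p_\lambda=p_1$ and there is nothing to prove. The same holds if $\lambda\in\{0,1\}$. So assume $\lambda\in(0,1)$ and $p_1\neq p_2$.

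Next consider the case where one exponent is infinite, say $p_2=\infty$ (the case $p_1=\infty$ is symmetric after swapping the roles with $1-\lambda$). Then $p_\lambda=p_1/(1-\lambda)\ge p_1$, and pointwise $|f|^{p_\lambda}=|f|^{p_1}|f|^{p_\lambda-p_1}\le |f|^{p_1}\left\|f\right\|_{\Leb^\infty}^{p_\lambda-p_1}$. Integrating and taking $p_\lambda$-th roots gives $\left\|f\right\|_{\Leb^{p_\lambda}}\le \left\|f\right\|_{\Leb^{p_1}}^{p_1/p_\lambda}\left\|f\right\|_{\Leb^\infty}^{1-p_1/p_\lambda}$. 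Since $p_1/p_\lambda=1-\lambda$, this is exactly the claimed bound.

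In the main case $p_1,p_2<\infty$, write
\begin{equation*}
    |f|^{p_\lambda}=|f|^{(1-\lambda)p_\lambda}\,|f|^{\lambda p_\lambda}.
\end{equation*}
Apply H\"older's inequality with conjugate exponents $s=\frac{p_1}{(1-\lambda)p_\lambda}$ and $s'=\frac{p_2}{\lambda p_\lambda}$. These are conjugate precisely because $\frac{1}{s}+\frac{1}{s'}=p_\lambda\left(\frac{1-\lambda}{p_1}+\frac{\lambda}{p_2}\right)=1$, and both are at least $1$. This yields
\begin{equation*}
    \Eop|f|^{p_\lambda}\le \left(\Eop|f|^{p_1}\right)^{(1-\lambda)p_\lambda/p_1}\left(\Eop|f|^{p_2}\right)^{\lambda p_\lambda/p_2}.
\end{equation*}
Raising both sides to the power $1/p_\lambda$ gives the inequality. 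Taking logarithms, this says that $r\mapsto\ln\left\|f\right\|_{\Leb^{1/r}}$ is convex in $r=1/p$, which is the log-convexity statement.

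There is no real obstacle; the only points needing care are checking that the H\"older exponents are conjugate and handling the endpoint $p=\infty$ (and any norm equal to $0$ or $\infty$) with the usual conventions.
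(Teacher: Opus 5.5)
Your proof is correct. The paper states this lemma without proof, treating it as the standard Lyapunov--H\"older interpolation inequality, and your argument is exactly the standard proof that fills it in: H\"older with the conjugate exponents $\frac{p_1}{(1-\lambda)p_\lambda}$ and $\frac{p_2}{\lambda p_\lambda}$, plus the pointwise bound $|f|^{p_\lambda}\le |f|^{p_1}\|f\|_{\infty}^{p_\lambda-p_1}$ for the $\infty$ endpoint.
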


\begin{remark}
In particular, Lemma~\ref{lem:norm-log-convex} yields the following useful boundary cases:
\begin{itemize}
    \item For $q \in [2,\infty]$, writing $\frac{1}{q} = \frac{2}{q} \cdot \frac{1}{2} + \frac{q-2}{q} \cdot \frac{1}{\infty}$ implies $\left\|f\right\|_{\Leb^q} \leq \left\|f\right\|_{\Leb^2}^{\frac{2}{q}} \left\|f\right\|_{\Leb^\infty}^{\frac{q-2}{q}}$.
    \item For $p,q\in [1,2)$ and $\vartheta=\min\left\{\frac{1/p-1/2}{1/q-1/2},1\right\}$, we have $\left\|f\right\|_{\Leb^{p}} \leq \left\|f\right\|_{\Leb^{q}}^{\vartheta} \left\|f\right\|_{\Leb^{2}}^{1-\vartheta}$.
\end{itemize}
\end{remark}

\printbibliography[title=References,heading=bibintoc]

\end{document}